\documentclass[a4paper]{article}

\usepackage[hidelinks]{hyperref}

\usepackage{amsmath,amssymb,mathtools,amsthm}
\usepackage{xcolor,graphicx,float,enumerate}

\usepackage[font=small]{caption}
\usepackage{subcaption}

\parskip6pt
\parindent2pt

\usepackage[utf8]{inputenc}
\DeclareUnicodeCharacter{00A0}{ }
\usepackage[a4paper]{geometry}

\newcommand{\ee}{\varepsilon}
\newcommand{\G}{\mathbb{G}}
\newcommand{\Gnu}{D_\gamma \G}
\newcommand{\M}{\mathbb{M}}
\newcommand{\N}{\mathbb{N}}
\newcommand{\cM}{{\mathcal M}}
\newcommand{\WM}{\cM(\Omega, \delta^\gamma)}
\newcommand{\SFL}{\mathrm{SFL}}
\newcommand{\RFL}{\mathrm{RFL}}
\newcommand{\CFL}{\mathrm{CFL}}
\newcommand{\Ls}{\mathrm{L}}
\newcommand{\Green}{\mathcal{G}}
\newcommand{\Martin}{\mathcal{M}}

\newcommand{\pv}{\mathrm{p.v.}\!}

\DeclareMathOperator{\sign}{sign}
\DeclareMathOperator{\supp}{supp}

\newtheorem{proposition}{Proposition}[section]
\newtheorem{theorem}[proposition]{Theorem}
\newtheorem{corollary}[proposition]{Corollary}
\newtheorem{lemma}[proposition]{Lemma}
\theoremstyle{definition}\newtheorem{definition}[proposition]{Definition}
\newtheorem{example}[proposition]{Example}
\newtheorem{remark}[proposition]{Remark}

\numberwithin{equation}{section}






\begin{document}
\title{Singular boundary behaviour and large solutions for fractional elliptic equations}

\author{Nicola Abatangelo\thanks{Institut f\"ur Mathematik, Goethe-Universit\"at Frankfurt am Main. \url{abatange@math.uni-frankfurt.de}} \and David G\'omez-Castro\thanks{Instituto de Matem\'atica Interdisciplinar, Universidad Complutense de Madrid. \url{dgcastro@ucm.es}} \and Juan Luis V\'azquez\thanks{Departamento de Matem\'aticas, Universidad Aut\'onoma de Madrid. \url{juanluis.vazquez@uam.es}}}
\maketitle

\abstract{We show that the boundary behaviour of solutions to
nonlocal fractional equations posed in bounded domains strongly differs from the one of solutions
to elliptic problems modelled upon the Laplace-Poisson equation with zero boundary data.
In this classical case it is known that, at least in a suitable weak sense, solutions of non-homogeneous Dirichlet problem are unique and tend to zero at the boundary.
	Limits of these solutions then produce solutions of some non-homogeneous Dirichlet problem as the interior data concentrate suitably to the boundary.
	Here, we show that such results are false for equations driven by a wide class of nonlocal fractional operators, extending previous findings for some models of the fractional Laplacian operator. Actually, different blow-up phenomena may occur at the boundary of the domain. We describe such explosive behaviours and obtain precise quantitative estimates depending on simple parameters of the nonlocal operators. Our unifying technique is based on a careful study of the inverse operator in terms of the corresponding Green function.
	}

\tableofcontents

\newpage

\section{Introduction}
\normalsize
In recent years there have been many studies on boundary value problems
driven by nonlocal operators $\Ls$ obtained as fractional powers of uniformly elliptic operators,
such as the Laplacian. In this context, according to the ``degree of nonlocality''
of the leading operator in the differential equation,
additional values need to be prescribed either on the boundary of the underlying domain
or on its whole complement.
So, given a regular bounded domain $\Omega\subseteq\mathbb R^n$,
the simplest complete problems take the form of an equation
\begin{equation}
\label{eq:FDE}
\Ls u = f \qquad \text{in }\Omega,
\end{equation}
complemented by homogeneous values
\begin{equation}
\label{eq:BV}
u=0 \qquad \text{on }\partial\Omega,\text{ or in }\mathbb R^n\setminus\overline\Omega,
\end{equation}
the last choice depending on the nonlocal operator $\Ls$.
Usually,~\eqref{eq:FDE} is written in some weak form which also encodes~\eqref{eq:BV}.
In the standard elliptic theory,~\eqref{eq:BV} can be replaced by
$u=g$ on $\partial\Omega$,  for $g$  an $L^p$ function  and therefore a.e. finite on $\partial\Omega$.
In this paper we study solutions to equations of the form~\eqref{eq:FDE} that develop
an explosive behaviour at the boundary, \textit{i.e.}, solutions satisfying
\begin{align*}
u(x)\to+\infty,
\qquad\text{as }x\to x_0, \text{ for almost all }x_0\in\partial\Omega.
\end{align*}
They are usually called \textit{large solutions}
 and they account for
a new phenomenon, not appearing in the classical elliptic theory.
We will show that large solutions are tightly connected to the solutions
of the homogeneous problem via a natural limiting process.
Finally, they exhibit quite peculiar divergence rates that we will derive.
All of this will be done for a specific class of
nonlocal operators $\Ls$ that includes the usual examples and more.\medskip

The present research is motivated by two striking results involving singular behaviour near the boundary for the solutions of~\eqref{eq:FDE} in the case where $\Ls$ is the so-called restricted fractional Laplacian (for which one has to prescribe data on~$\mathbb R ^n \setminus \overline \Omega$). \medskip

 One of these striking results is the existence of nontrivial solutions of~\eqref{eq:FDE} such that $f = 0$ in $\Omega$ which, moreover, are positive everywhere and blow-up on the boundary. Explicit examples on the ball were constructed in \cite{MR1295711} (see also \cite{bogdan99representation,bogdan2009potential}). This is in contrast with the case of the standard Laplacian where no boundary blow-up solution exists.
The existence of this kind of solutions was systematised
independently in \cite{grubb15} (which also contains a thorough regularity theory,
see also \cite{grubb14} for related results and \cite{grubb18} for a review) and
in \cite{Abatangelo2015} for the restricted fractional Laplacian, and extended in \cite{Abatangelo2017a} for the spectral fractional Laplacian (which requires prescribed data at the boundary).\medskip

The second striking result, described in \cite{Abatangelo2015} when $\Ls$ is the restricted fractional Laplacian and in \cite{Abatangelo2017a} when $\Ls$ is the spectral fractional Laplacian, is that some admissible functions $f$ produce solutions $u$ blowing-up at the boundary, although they are limits of solutions with ``nice'' $f$ and zero boundary data. This is as well a new behaviour of the nonlocal problem, not present for the usual Laplacian. \medskip

For the case of the usual Laplacian, it is known that
the wider classes of weak or very weak solutions obtained as limits of the variational solutions satisfy the boundary condition either in the sense of traces or in a more generalised sense, described in \cite{Ponce2016a} as the average condition
\begin{equation}
	\label{eq:Ponce introduction}
	\eta^{-1} \int_{\{\mathrm{dist}(x,\partial\Omega) < \eta \}} |u| \to 0 \qquad \textrm{ as } \eta \downarrow 0.
\end{equation}

The aim of the present work is to show that these two blow-up phenomena occur for a large class of nonlocal operators of elliptic type. We  treat in a unified way the typical nonlocal elliptic equations, in particular the different fractional Laplacians on bounded domains. Our distinctive technique is based on the use of the Green kernel which gives a common roof to the several different cases. This approach extends previous work in \cite{bonforte+figalli+vazquez2018,GC+Vazquez2018}.
		
	We consider a general family of operators indexed on two parameters: one describing the interior point singularity of the Green kernel, the other one the kernel's boundary behaviour. This requires serious technical work, that justifies the extension of the paper.

First, we want to study and classify the explosive (or \textit{large}) solutions
whose singularity is, in some sense, generated by the right-hand side $f$.
	In particular, we compute explicitly the asymptotic boundary behaviour
	of $u$ for the family of power-like data $f \asymp \delta^\beta$ near the boundary, where $\delta(x) := \textrm{dist}(x,\partial \Omega)$. Here, we say that $f \asymp g$ on a set if there exists $C>0$ such that $C^{-1} g \le f \le C g$ on that set. Our main formula~\eqref{eq:boundary behaviour introduction} gives the behaviour of $u$ in terms of $f$ and the kernel of $\Ls$ in simple algebraic terms.
	The formula covers the whole range of behaviours, explosive or not.
	We also translate estimate~\eqref{eq:Ponce introduction} to our context by introducing a suitable weight, taking care of the singular profiles (see Lemma~\ref{lem:subritical traces}).
	We provide some careful numerical computations, to show the formation of the boundary singularity due to the right-hand side (see Figures \ref{fig:subcritical} and \ref{fig:towards Martin}). 

	Even if  the solution operator for the Dirichlet problem acting on a class of good functions $f$ produces solutions with Dirichlet boundary data, we show that the natural closure of that solution operator to its maximal domain of definition produces solutions which no longer satisfy the Dirichlet condition and could reach a range of boundary blow-up that we describe.
	In the case of problems in which the boundary condition is set (\textit{e.g.}, the spectral fractional Laplacian), this is counter-intuitive.
	The occurrence of boundary blow-up is a very important fact, that does not happen for the usual Laplacian.

Secondly, we remark how there is a different class of explosive solutions
whose singularity is not generated by any right-hand side.
	In fact, they can be chosen as ``$\Ls$-harmonic in $\Omega$'' in the sense $\Ls w = 0$.
This class relies on some hidden information in the form of singular behaviour that can be prescribed on the boundary. Moreover, this second class can be obtained as a limit of singular solutions of the previous class  as the support of $f$ concentrates at the boundary in a convenient way.  This means they cannot be disregarded in any complete theory of the problem. See the detailed results in Section~\ref{sec:Martin problem}.

We conclude this introduction with an important remark.
If a definition of solution of~\eqref{eq:FDE} is ``too weak'', then the combination of the two classes seems to pose a problem to uniqueness.
This highlights the importance of a suitable definition of weak solution of~\eqref{eq:FDE} preserving uniqueness and including the classical solutions.
We provide this definition in Section~\ref{sec:interior} under the name of \emph{weak-dual solution}, and show that the problem is then well-posed.
We also detect the optimal class of admissible data $f$.
To take care of the second class, we construct a ``singular boundary data'' problem. We give a well-posed notion of solution for this second problem:
uniqueness is the easy part.


\subsection{Main topics and results}

\paragraph{Existence and uniqueness results for~\eqref{eq:FDE}.}
To begin with, we need to produce a general existence theory for data $f$ in good classes, \textit{i.e.}, compactly supported and bounded. We want to treat a general class of operators
	such that the unique solution of~\eqref{eq:FDE} is given by the formula
 	\begin{equation}
	\label{eq:K0}
	\tag{K0}
	\Green (f) (x)= \int_ \Omega \G(x,y) f(y) \; dy
	\end{equation}
	with kernels $\G:\Omega\times\Omega\to\mathbb R$ such that
	\begin{align}
	\label{eq:K1}
	\tag{K1}
	\G(x,y) &= \G(y,x) \qquad \text{and} \\
	\label{eq:K2}
	\tag{K2}
	\G (x,y) &\asymp \frac{1}{|x-y|^{n-2s}} \left( \frac{\delta(x) \delta(y)}{|x-y|^2} \wedge 1  \right)^\gamma.
	\end{align}
	The two exponents $s$ and $\gamma$ take values
	\begin{align}\label{eq:K3}\tag{K3}
	s,\gamma \in (0,1],
	\qquad\text{with}\qquad 2s\leq n.
	\end{align}
Their relative values will play an important role in the results.

Throughout this note we use the notation $a \wedge b = \min \{a,b\}$, $a \vee b = \max \{a,b\}$.

	Some notable examples of this general class of operators are the three most known fractional Laplacian operators:
	\begin{enumerate}[\it i.]
		\item The restricted fractional Laplacian (RFL): in this case $\gamma = s\in(0,1)$;
		\item The spectral fractional Laplacian (SFL), for which $\gamma = 1$ and $s\in(0,1)$;
		\item The regional or censored fractional Laplacian (CFL) which has $\gamma = 2s-1$ and $s\in(1/2,1)$.
	\end{enumerate}
	These examples will be presented in some more detail in Section~\ref{sec:examples of operators} so that we can adapt to them the general results.

	In Section~\ref{sec:interior} we prove existence, uniqueness, a priori estimates, 	and some regularity for problem~\eqref{eq:FDE}.
	In Section~\ref{sec:breakdown boundary condition},
	we prove that the optimal class of data $f$ such that~\eqref{eq:K0} is well defined
	(meaning $\Green(|f|) \not \equiv +\infty$) is
	\begin{equation}
		\label{eq:admissible class introduction}
		f \in L^1 (\Omega, \delta^\gamma) = \{ f \textrm{ measurable in }  \Omega: f \delta^\gamma \in L^1 (\Omega) \}.
	\end{equation}

\paragraph{Boundary behaviour.} As we mentioned, for the standard Laplacian $-\Delta$, the zero boundary data are taken in the some sense even when $f$ is taken in the optimal class of data. The sense depends on how good is $f$, see the general results in \cite{Ponce2016a}. A quite novel property of the restricted fractional Laplacian on bounded domains shows that this is not true for admissible $f$ even if they are not so badly behaved. This is explained in \cite{Abatangelo2015} and we want to extend the analysis to our general class of operators and show the detailed relation between the operators, the boundary behaviour of $f$, and the singular boundary behaviour of the solution. The main information about the operators will be the values of $\gamma$ and $s$.
	
	 	In Theorem~\ref{thm:range of exponents} we establish the explicit estimate
	\begin{equation}\label{eq:boundary behaviour introduction}
		\Green(\delta^{\beta}) \asymp \delta^{\gamma \wedge ( \beta + 2s )} \qquad \text{whenever }\gamma + \beta > -1 \text{ and }  \beta \ne \gamma - 2s
	\end{equation}
	that needs a delicate computation using the properties of the kernel.
	This is depicted in Figure~\ref{fig:alpha-vs-beta}.
	Notice that $\gamma + \beta > -1$ is the condition so that
	$f = \delta^\beta$ belongs to the admissible class given by~\eqref{eq:admissible class introduction}.
	
	\begin{figure}[th]
		\centering
		\includegraphics{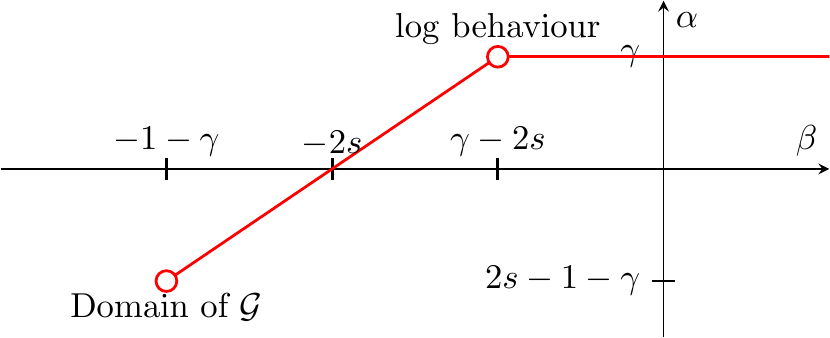}
		\caption{Relation of parameters $\alpha$ and $\beta$ such that $\Green(\delta^{\beta}) \asymp \delta^\alpha$.}
		\label{fig:alpha-vs-beta}
	\end{figure}

	In many cases, the existence of eigenfunctions is known, and their boundary behaviour is well understood.
	Under~\eqref{eq:K0},~\eqref{eq:K2}, and some extra assumptions on the operator $\Ls$, the authors in \cite{bonforte+figalli+vazquez2018}
	proved that the operator $\Green$ admits an eigendecomposition and its first eigenfunction $\Phi_1$ satisfies
		\begin{equation}
			\label{eq:estimate first eigenfunction}
			\Phi_1 \asymp \delta^\gamma \qquad \text{in }\Omega.
		\end{equation}
	The boundary behaviour is clear from the algebraic point of view, since $\gamma$ is the only exponent fixed by $\Green$.

\paragraph{Solutions with singular behaviour.}
	We observe that, according to  formula~\eqref{eq:boundary behaviour introduction}, there are values of $\beta$ for which the solution associated to datum $\delta^\beta$ is singular at the boundary: this happens whenever $\beta\in(-1-\gamma,-2s)$ is allowed, and therefore when $\gamma>2s-1$. In particular, it  comes out that if $ \gamma > 2s - 1 $ then there exist solutions of the Dirichlet problem not complying with the condition $u = 0$ on the boundary. This was known for the RFL \cite[Proposition 3]{Abatangelo2015} and the SFL \cite[Proposition 7]{dhifli2012subordinate}.

 	The behaviour $\delta^{\gamma\wedge(2s-\gamma-1)}$, corresponding to the limit case $\beta=-1-\gamma$, serves somehow as an upper bound for solutions.
	In Lemma~\ref{lem:subritical traces} we will prove that
	\begin{enumerate}[\rm a)]
	\item If $\gamma > s - 1/2$, then for any $f\in L^1(\Omega,\delta^\gamma)$
		\begin{equation*}
			\frac1\eta \int_{ \{  \delta < \eta  \} } \frac{\Green(f)}{\delta^{2s-\gamma-1}} \longrightarrow 0 \qquad\text{as }\eta\downarrow 0.
		\end{equation*}
	\item If $\gamma < s - 1/2$, then for any nonnegative $f\in L^1(\Omega,\delta^\gamma)$ and $\eta>0$
		\begin{equation*}
			\frac1\eta \int_{ \{  \delta < \eta  \} } \frac{\Green(f)}{\delta^\gamma} \asymp 1.
		\end{equation*}
	\end{enumerate}
	We also prove that, in the case $\gamma = s - 1/2$, there is a logarithmic correction.
	
	For the usual Laplacian, when $s=\gamma=1$, we have $0=2s- \gamma - 1<\gamma$: this reproduces~\eqref{eq:Ponce introduction}. This same fact holds for the CFL, because $\gamma=2s-1$. If $2s-\gamma - 1 > 0$ then all solutions tend to $0$ upon approaching the boundary.
	
	 The two conditions $\gamma > 2s-  1 $ and $\gamma > s - \frac 1 2$ allow us to split the parameter $s, \gamma$ in three regions as in Figure~\ref{fig:gamma-vs-s}.
		\begin{figure}[ht]
			\centering
			\includegraphics[width=.3\textwidth]{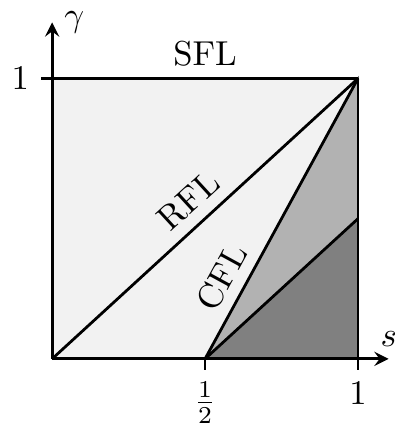}
			\caption{Different relations between $\gamma$ and $s$.}
			\label{fig:gamma-vs-s}
		\end{figure}

\paragraph{Normal derivatives.} A sharpest study of the boundary behaviour of solution with data $f \in L^\infty_c (\Omega)$ consists of the analysis of the limit
	\begin{equation*}
		D_\gamma u (z) := \lim_{\substack{x \to z \\ x \in \Omega}} \frac{u(x)}{\delta(x)^\gamma},
		\qquad z\in\partial\Omega.
	\end{equation*}
	We will call this limit $\gamma$-\textit{normal derivative}.
	We devote Section~\ref{sec:sharp boundary behaviour good data} to the study of these normal derivatives
	(see Theorem~\ref{thm:gamma normal derivative}).

\paragraph{Large solutions.} In \cite{bogdan2009potential} the authors introduce a surprising singular solution of the homogeneous problem $f=0$ that shows very precise asymptotics at the boundary. It is the type known as \emph{large solution} in other situations for nonlinear equations.
For example, the function
\begin{equation*}
u(x):=\begin{dcases}
(1-|x|^2)^{s-1} & \text{for }|x|<1 \\
0 & \text{for }|x|\geq 1
\end{dcases}
\end{equation*}
is known to satisfy ${(-\Delta)}^s_\RFL u(x)=0$ for $|x|<1$, see \cite[Example 1]{bogdan99representation} and \cite{MR1295711}.
In \cite{Abatangelo2015} there is a complete description of the singular boundary value problem for the RFL,
while in \cite{Abatangelo2017a} there is the analogue for the SFL.\medskip

We prove that this theory may be obtained as a limit of interior problems.
We construct one such particular large solution $u^\star$ which is $\Ls$-harmonic on the interior ($\Ls u^\star=0$ in $\Omega$).
In Section~\ref{sec:Martin problem}, we show that there exists a sequence of admissible functions ${(f_j)}_{j\in\mathbb N}$
(with $\textrm{dist}(\supp f_j, \partial \Omega ) < 2/j$) such that
	\begin{equation*}
		\Green(f_j) \rightharpoonup u^\star \qquad \textrm{ in } L^1_{loc} (\Omega),\text{ as }j\uparrow\infty.
	\end{equation*}
	This limit function has the boundary behaviour
	\begin{equation*}
		\label{eq:singular solution}
		u^\star \asymp \delta^{(2s-\gamma - 1) \wedge \gamma} \qquad\text{in }\Omega,
	\end{equation*}
	except in the case $\gamma=s-1/2$ when a logarithmic correction is in order.

	Notice that the exponent is the upper bound of the range in~\eqref{eq:boundary behaviour introduction}.
	We will prove that problems
	\begin{equation*}
		\int_ \Omega u \, \psi = \int_{\partial\Omega}  h \, D_\gamma [\Green(\psi)] \qquad \text{for any } \psi \in L^\infty_c (\Omega).
	\end{equation*}	
	have a unique solution $u$,
	which is comparable to $u^\star$ at the boundary.
	When $\gamma>s-\frac12$, this is even the unique weak-dual solution of problem
	\begin{equation}
		\label{eq:Martin}
		\begin{dcases}
			\Ls u = 0 & \text{in }\Omega, \\
			u = 0 & \text{in }\mathbb R^N\setminus\overline\Omega \textrm{ (if applicable)}, \\
			\frac{u}{u^\star} = h & \text{on }\partial \Omega.
		\end{dcases}
	\end{equation}

\paragraph{Comments.}
	Our presentation unifies in a single theory previous results
	for the RFL ($u^\star \asymp \delta^{s-1}$, see \cite{Abatangelo2015}),
	the SFL ($u^\star \asymp \delta^{2(s-1)}$, see \cite{Abatangelo2017a}),
	and the CFL ($u^\star \asymp 1$, see \cite{Chen2018}).\medskip

	The case
		$\gamma < s- \frac 1 2 $, which does not include any of the main known examples, is somewhat particular.	
	In this case, due to~\eqref{eq:singular solution}, $0 < u^\star (x) \to 0$ as $x \to \partial \Omega$ and it is a non-trivial solution of~\eqref{eq:FDE} with data $f = 0$. This yields some doubt about the uniqueness of solutions to~\eqref{eq:FDE}-\eqref{eq:BV}. Furthermore, if $\gamma \le s - \frac 1 2$ then $u^\star \asymp \delta^\gamma$, which in turn means that the critical solutions have the same boundary behaviour as the solutions for regular data $f$. This does not seem to be consistent with elliptic problems like~\eqref{eq:FDE}.

\subsection{Some examples}
\label{sec:examples of operators}

Large classes of operators $\Ls$ have Green operators $\Green$ given by
\eqref{eq:K0}--\eqref{eq:K3}: here are some notorious examples that are reviewed for instance in \cite{av-bari,bonforte+figalli+vazquez2018, MR3588125}.

\subsubsection*{The restricted fractional Laplacian (RFL)}

The restricted fractional Laplacian is defined as the singular integral operator
\begin{equation*}
	(-\Delta)^s_\RFL u (x)= \pv\int_{ \mathbb R^n } \frac{u(x) - u(y)}{|x-y|^{n+2s}} \; dy,
\end{equation*}
up to a multiplicative constant only depending on $n$ and $s$,
and corresponds to the $s$ power of the Laplacian operator defined in $\mathbb R^n$
(which can be equivalently defined via the Fourier transform).

The natural boundary conditions are given in $\mathbb R^n\setminus\overline\Omega$
\begin{equation*}
	\begin{dcases}
		(-\Delta)^s_\RFL u =  f & \text{in }\Omega \\
		u = 0 & \text{in }\mathbb R^n\setminus\overline\Omega
	\end{dcases}
\end{equation*}
and also have some stochastic interpretation corresponding to killing a Lévy flight upon leaving $\Omega$.


Here we can consider all $s\in (0,1)$ and we have the precise value
\begin{equation*}
	\gamma = s.
\end{equation*}
Details can be consulted in many references, see for instance  \cite{BonSireVaz2015,Ros-Oton2014}.

\subsubsection*{Perturbations of the RFL}

Using the above one, it is possible to build other examples.
Here are a couple of interesting operators which are included in our analysis
and the corresponding references:
\begin{itemize}
\item $(-\Delta)^s_\RFL+b\cdot\nabla$ for $s\in(1/2,1)$ and $b\in L^\infty(\Omega)$: in this case (see \cite{bogdan2012gradient})
\begin{align*}
\gamma=s.
\end{align*}
\item $(-\Delta)^{s_1}_\RFL+(-\Delta)^{s_2}_\RFL$ with $0<s_2<s_1\leq 1$: in this case
(see respectively \cite{chen2010frac} and \cite{chen2012lapl})
\begin{eqnarray*}
s=\gamma=s_1 & & \text{for }s_1<1 \text{ and } n>2s_1, \\
s=\gamma=1   & & \text{for }s_1=1 \text{ and } n\geq 3.
\end{eqnarray*}
\end{itemize}

\subsubsection*{The spectral fractional Laplacian (SFL)}
A different way of considering the $s$ power of the Laplacian consists in taking the power
of the Dirichlet Laplacian, \textit{i.e.}, the Laplacian coupled with homogeneous boundary conditions.
This approach typically makes use of an eigenbasis expansion. Let ${(\varphi_m)}_{m\in\mathbb{N}}$ be the eigenfunctions of the Laplacian linked to the nondecreasing sequence of eigenvalues $0 < \lambda_1 < \lambda_2 \le ...$ (repeated according to their multiplicity)
\begin{equation*}
	\begin{dcases}
		-\Delta \varphi_m = \lambda_m \varphi_m & \text{in }\Omega, \\
		\varphi_m = 0 & \text{on }\partial\Omega.
	\end{dcases}
\end{equation*}
Let $u \in H^2 \cap H^1_0 (\Omega)$. Letting  $\widehat u_m = \int_ \Omega u \varphi_m$, we have the representation
\begin{equation*}
	-\Delta u = \sum_{m=1}^{+\infty} \lambda_m \widehat u_m \varphi_m.
\end{equation*}

The spectral fractional Laplacian is the operator with eigenvalues $\lambda_m^s$ corresponding to eigenfunctions $\varphi_m$. Hence, we define
\begin{equation*}
	(-\Delta)^s_\SFL u = \sum_{m=1}^{+\infty} \lambda_m^s \widehat u_m \varphi_m .
\end{equation*}
Since this is an operator-wise definition we provide the boundary conditions given from the classical operator, and hence the problem is
\begin{equation*}
	\begin{dcases}
		(-\Delta)^s_\SFL u = f & \text{in }\Omega, \\
		u = 0 & \text{on }\partial \Omega.
	\end{dcases}
\end{equation*}
We underline how this is not the only possible representation and it is also possible to write it as
the singular integral operator
\begin{align*}
(-\Delta)^s_\SFL u(x)=\pv\int_\Omega \big(u(x)-u(y)\big)\,J(x,y)\;dt+\kappa(x)\,u(x),
\qquad x\in\Omega,
\end{align*}
for
\begin{align*}
J(x,y)&=\frac{s}{\Gamma(1-s)}\int_0^{+\infty}{p_\Omega(t,x,y)}\;\frac{dt}{t^{1+s}},
\\
\kappa(x)&=\frac{s}{\Gamma(1-s)}\int_0^{+\infty}\Big(1-\int_\Omega p_\Omega(t,x,y)\;dy\Big)\,\frac{dt}{t^{1+s}},
\end{align*}
and $p_\Omega$ the Dirichlet heat kernel on $\Omega$. It is possible to prove that,
when $\partial\Omega\in C^{1,1}$,
\begin{align}\label{spectral estimates}
J(x,y)\asymp\frac{1}{{|x-y|}^{n+2s}}\left( \frac{\delta(x)\,\delta(y)} {|x-y|^2} \wedge 1\right) ,
\qquad
\kappa(x)\asymp\delta(x)^{-2s},
\qquad
x,y\in\Omega,
\end{align}
see \cite[Theorem 5.92]{bogdan2009potential}.

Stochastically speaking, this operator generates a subordinate killed Brownian motion,
which is a Brownian motion killed upon hitting $\partial\Omega$ and which is then evaluated at random times
distributed as an increasing $\alpha$-stable process in $(0,\infty)$, see~\cite{song2003}.
The killing of the Brownian motion as it touches the boundary is encoded in the homogeneous boundary conditions.

Here again $s\in(0,1)$ and in this case
\begin{equation*}
	\gamma = 1.
\end{equation*}
Details can be consulted in many references, see for instance~\cite{BonSireVaz2015,CabreTan}.

\subsubsection*{An interpolation of the RFL and the SFL}
A family of ``intermediate'' operators between the RFL and the SFL
has been built in~\cite{vondra}. For $\sigma_1,\sigma_2\in(0,1]$,
one can consider the spectral $\sigma_2$ power of a RFL of exponent~$\sigma_1$
\begin{align*}
\Ls_{\sigma_1,\sigma_2} =
\big( \, (-\Delta)^{\sigma_1}_\RFL \, \big)^{\sigma_2}_\SFL.
\end{align*}
It is formally clear that
\begin{align*}
\Ls_{\sigma_1,1}=(-\Delta)^{\sigma_1}_\RFL
\qquad\text{and}\qquad
\Ls_{1,\sigma_2}=(-\Delta)^{\sigma_2}_\SFL.
\end{align*}
The Green function associated to this operator satisfies~\eqref{eq:K2} with
\begin{align*}
s=\sigma_1\sigma_2
\qquad\text{and}\qquad
\gamma=\sigma_1,
\end{align*}
see~\cite[Theorem 6.4]{vondra}.

\subsubsection*{The censored fractional Laplacian (CFL)}
This operator is defined as
\begin{equation*}
	(-\Delta)^s_\CFL u (x) = \pv\int_{ \Omega } \frac{u(x) - u(y)}{|x-y|^{n+2s}} \; dy,
\end{equation*}
so that we have identity
\begin{align*}
	(-\Delta)^s_\CFL u = (-\Delta)^s_\RFL u - u \, (-\Delta)^s_\RFL \chi_\Omega
\end{align*}
(recall that the RFL is evaluated only on functions satisfying $u=0$ in $\mathbb R^n\setminus\overline\Omega$).

This operator generates a censored stable process, introduced in~\cite{bogdan03censored},
a stable process which is confined in~$\Omega$ and
finally killed upon hitting~$\partial\Omega$.
For this reason, a suitable boundary condition is
\begin{align*}
u=0\qquad\text{on }\partial\Omega.
\end{align*}

Here $s\in(1/2,1)$ and
\begin{equation*}
	\gamma = 2s-1,
\end{equation*}
see~\cite{bogdan03censored,Chen2009}.

\section{Interior Dirichlet problem: existence, uniqueness and integrability}
\label{sec:interior}

\subsection{Functional properties of the Green operator}

\begin{theorem}\label{thm:functional}
Assume~\eqref{eq:K0}--\eqref{eq:K3}.
Then $\Green$ is a continuous operator
\begin{align}
L^\infty(\Omega)\ & \longrightarrow\ L^\infty(\Omega), \label{infty-infty}\\
L^\infty_c(\Omega)\ & \longrightarrow\ \delta^\gamma L^\infty(\Omega), \label{compact infty-almost derivative}\\
L^1(\Omega)\ & \longrightarrow\ L^1(\Omega), \label{l1-l1}\\
L^1(\Omega,\delta^\gamma)\ & \longrightarrow\ L^1_{loc}(\Omega). \label{weighted l1-local l1}
\end{align}
Moreover, for $f\in\ L^1_c(\Omega)$,
	\begin{equation}
	\label{eq:L 1 to L inf outside support}
	|\Green(f) (x)| \le C\, \mathrm{dist}\big(x,\supp (f)\big)^{2s-n-\gamma} \int_ \Omega |f| \delta^\gamma,
	\qquad x \in \Omega \setminus \supp (f).
	\end{equation}
In particular, when $f\in L^1_c(\Omega)$ then $\delta^{-\gamma}\Green(f)$ is bounded in a neighbourhood of the boundary.
\end{theorem}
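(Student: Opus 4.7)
The plan is to build all four continuity claims on the same pointwise estimates drawn from~\eqref{eq:K2}, using Fubini's theorem and the symmetry~\eqref{eq:K1} to transfer one claim into another. The hypothesis $2s\leq n$ guarantees that $|x-y|^{2s-n}$ is locally integrable, while the ``smallness'' of $\delta(x)\delta(y)/|x-y|^2$ when $x$ lies close to $\partial\Omega$ and $y$ is held away from it will generate the $\delta^\gamma$ decay at the boundary.

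First, for~\eqref{infty-infty}, I bound the bracket in~\eqref{eq:K2} by $1$ to get $\G(x,y)\leq C|x-y|^{2s-n}$; since $\int_{B_R(x)}|x-y|^{2s-n}\,dy\lesssim R^{2s}$, the $y$-integral of $\G(x,\cdot)$ over $\Omega$ is uniformly bounded in $x$, so $\|\Green(f)\|_\infty\leq C\|f\|_\infty$. Claim~\eqref{l1-l1} then follows by Fubini and symmetry:
\begin{equation*}
\int_\Omega|\Green(f)(x)|\,dx \leq \int_\Omega|f(y)|\Big(\int_\Omega\G(y,x)\,dx\Big)\,dy \leq C\|f\|_{L^1}.
\end{equation*}

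For~\eqref{compact infty-almost derivative}, fix $f\in L^\infty_c(\Omega)$ and set $d_0:=\mathrm{dist}(\supp f,\partial\Omega)>0$. When $\delta(x)$ is below a suitable threshold $\eta(d_0)$, every $y\in\supp f$ satisfies both $|x-y|\geq d_0/2$ (triangle inequality) and $\delta(x)\delta(y)/|x-y|^2\leq 1$, so~\eqref{eq:K2} yields
\begin{equation*}
\G(x,y) \leq C\,\frac{\delta(x)^\gamma\delta(y)^\gamma}{|x-y|^{n-2s+2\gamma}} \leq C_f\,\delta(x)^\gamma,
\end{equation*}
hence $|\Green(f)(x)|\leq C_f\|f\|_\infty\delta(x)^\gamma$ there; on the complementary region $\{\delta\geq\eta(d_0)\}$, $\delta^\gamma$ is bounded below and \eqref{infty-infty} controls $\Green(f)$. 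Estimate~\eqref{weighted l1-local l1} then drops out by Fubini and symmetry: for a compact $K\Subset\Omega$, the inner integral is $\Green(\chi_K)(y)\leq C_K\delta(y)^\gamma$ by~\eqref{compact infty-almost derivative}, whence $\int_K|\Green(f)|\leq C_K\int_\Omega|f|\delta^\gamma$.

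The technical core is the pointwise bound behind~\eqref{eq:L 1 to L inf outside support}: for $y\in\supp f$ and $r:=\mathrm{dist}(x,\supp f)\leq|x-y|$, I would prove $\G(x,y)\leq C\,\delta(y)^\gamma r^{2s-n-\gamma}$ via a dichotomy on $\delta(y)$ versus $|x-y|$. If $\delta(y)\leq|x-y|$, the triangle inequality gives $\delta(x)\leq 2|x-y|$, so $\delta(x)\delta(y)/|x-y|^2\leq 2\delta(y)/|x-y|$ and the bracket in~\eqref{eq:K2} is at most $C(\delta(y)/|x-y|)^\gamma$; if instead $\delta(y)>|x-y|$, then $\delta(y)/|x-y|>1$ forces $(\cdot\wedge 1)^\gamma\leq 1\leq(\delta(y)/|x-y|)^\gamma$. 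Either way $\G(x,y)\leq C\delta(y)^\gamma|x-y|^{2s-n-\gamma}\leq C\delta(y)^\gamma r^{2s-n-\gamma}$, since $n-2s+\gamma>0$. Integrating against $|f|$ gives~\eqref{eq:L 1 to L inf outside support}. The last assertion follows by rerunning the argument of~\eqref{compact infty-almost derivative} with $\|f\|_{L^\infty}$ replaced by $\|f\|_{L^1}$: the bound $\G(x,y)\leq C_f\delta(x)^\gamma$ for $y\in\supp f$ and $\delta(x)\leq\eta(d_0)$ does not rely on boundedness of $f$, and the final integration is valid for $f\in L^1_c(\Omega)$. I expect the two-case kernel analysis above to be the only delicate step; the rest reduces to Fubini, symmetry, and the smallness of $\delta(x)\delta(y)/|x-y|^2$ near the boundary.
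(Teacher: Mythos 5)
Your proposal is correct and follows essentially the same route as the paper: bound the bracket in~\eqref{eq:K2} to get the two pointwise kernel estimates, deduce~\eqref{infty-infty} and~\eqref{compact infty-almost derivative} directly, and then obtain~\eqref{l1-l1} and~\eqref{weighted l1-local l1} by Fubini and the symmetry~\eqref{eq:K1} applied to $\Green(\chi_\Omega)$ and $\Green(\chi_K)$. You are in fact slightly more explicit than the paper in two spots it leaves implicit: the dichotomy $\delta(y)\lessgtr|x-y|$ giving $\G(x,y)\le C\delta(y)^\gamma|x-y|^{2s-n-\gamma}$ behind~\eqref{eq:L 1 to L inf outside support}, and the observation that the final boundary assertion for $f\in L^1_c(\Omega)$ needs the $\delta(x)^\gamma$-weighted kernel bound rather than~\eqref{eq:L 1 to L inf outside support} itself.
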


\begin{proof}
We are going to extensively use assumptions~\eqref{eq:K0}--\eqref{eq:K3} without further notice.

As to~\eqref{infty-infty},
we simply estimate, for any $f\in L^\infty(\Omega)$ and $x\in\Omega$,
\begin{align*}
\big|\Green(f)(x)\big|\leq\|f\|_{L^\infty(\Omega)}\int_\Omega \G(x,y)\;dy
	\leq C \|f\|_{L^\infty(\Omega)}\int_\Omega{|x-y|}^{2s-n}\;dy
	\leq C \|f\|_{L^\infty(\Omega)}.
\end{align*}

Concerning~\eqref{compact infty-almost derivative},
for $x \in \Omega \setminus \supp (f)$, we deduce
	\begin{align*}
		|\Green(f) (x)| &\le \|f\|_{L^\infty(\Omega)}\int_{\supp(f)} \G (x,y) \; dy \\
		&\le C \|f\|_{L^\infty(\Omega)} \delta(x)^\gamma \int_{\supp(f)} |x-y|^{2s-n-2\gamma} \delta(y)^\gamma \; dy
	\end{align*}
	and this proves the result.
	
For~\eqref{l1-l1}
we estimate, for $f\in L^1(\Omega)$,
\begin{align}
\int_\Omega \big| \Green(f) \big| \leq \int_\Omega \int_\Omega \G(x,y) |f(y)| \; dy \; dx
=\int_\Omega |f| \, \Green(\chi_\Omega) \leq C \|f\|_{L^1(\Omega)}
\label{lem:L 1 to L inf outside support}
\end{align}
where we have used~\eqref{infty-infty} on $\Green(\chi_\Omega)$.
	
In order to prove~\eqref{weighted l1-local l1}, we notice that,
	for any $K\Subset\Omega$ and $f\in L^1(\Omega,\delta)$, we have
	\begin{align*}
		\int_K \big| \Green(f) \big| \leq \int_K \int_\Omega \G(x,y) |f(y)| \; dy \; dx
		=\int_\Omega |f| \, \Green(\chi_K)
		\leq C \int_\Omega |f| \, \delta^\gamma
		\end{align*}
	where we have used~\eqref{compact infty-almost derivative} on $\Green(\chi_K)$.
	
Finally, we prove~\eqref{eq:L 1 to L inf outside support}.
For $x \in \Omega \setminus \supp (f)$ we have
\begin{align*}
|\Green(f) (x)| &\le \int_{\Omega} \G (x,y) |f(y)| \; dy \\
&\le C \int_{ \Omega} |f(y)|\delta(y)^\gamma \, dy \; \sup_{y \in \supp (f)} |x-y|^{2s-n-\gamma}  \\
&\le C \|f \delta^\gamma \|_{L^1(\Omega)} \, \mathrm{dist}\big(x, \supp(f)\big) ^{2s-n-\gamma}
\end{align*}
and this proves the result.
\end{proof}

\begin{remark}
In Section~\ref{sec:sharp functional spaces} we will give a sharper characterization of the image
of map $\Green$ in terms of weighted $L^1$ spaces.
\end{remark}

\begin{remark}
	\label{rem:measure data}
	Formally, one could take $\mu\in\cM(\Omega)$ and estimate
	\begin{align}
		\label{eq:measure-l1 estimate}
		\int_\Omega \big| \Green(\mu) \big| \leq \int_\Omega \int_\Omega \G(x,y) \; d|\mu|(y) \; dx
		= \int_\Omega \Green(\chi_\Omega) \; d|\mu|
		\leq C |\mu|(\Omega)
	\end{align}
	where we have used~\eqref{infty-infty} on $\Green(\chi_\Omega)$ or take $\mu\in\cM(\Omega,\delta^\gamma)$ and estimate, for any $K\Subset\Omega$
	\begin{align}
		\label{eq:weighted measure-l1loc estimate}
	\int_K \big| \Green(\mu) \big| \leq \int_K \int_\Omega \G(x,y) \; d|\mu|(y) \; dx
		= \int_\Omega \Green(\chi_K) \; d|\mu|
		\leq C \int_\Omega \delta^\gamma d|\mu|
	\end{align}
	where we have used~\eqref{compact infty-almost derivative} on $\Green(\chi_K)$.
	This computation is justified in the typical examples where $\G$ is continuous. However, since we have made no continuity assumptions for $\G$, it is possible that integration against a measure is not defined. We will give more details on this case in Section~\ref{sec:measure data}.
\end{remark}

\subsection{Weak-dual formulation}

If $\Ls$ is self-adjoint, equations of type~\eqref{eq:FDE} are typically written in \emph{very weak} form as
\begin{equation}\label{very weak}
	\int_ \Omega u \, \Ls \varphi = \int_ \Omega f \varphi
\end{equation}
for all test functions $\varphi$ in some adequate space given by the operator and the boundary conditions. Since we want to tackle multiple types of operators and boundary conditions, we focus instead on the \emph{weak-dual} formulation (see, \textit{e.g.},~\cite{bonforte+figalli+vazquez2018}).
This is formulated instead in terms of the inverse operator $\Green$, which is taken as an \emph{a priori}.
This allows to avoid giving a meaning to $\Ls \varphi$.

\begin{definition}
Given $f\in L^1(\Omega,\delta^\gamma)$, a function $u\in L^1_{loc}(\Omega)$ is a weak-dual solution of problem~\eqref{eq:FDE} if
\begin{equation}\label{weak dual}
	\int_ \Omega u \psi = \int_ \Omega f \, \Green (\psi), \qquad \text{ for any } \psi \in L^\infty_c (\Omega).
\end{equation}
\end{definition}
Note that this weak-dual formulation is equivalent to take test functions $\varphi\in\Green (L^\infty_c(\Omega))$ in~\eqref{very weak}. Also, we underline how the integral in the right-hand side of~\eqref{weak dual} is finite in view of~\eqref{compact infty-almost derivative}.

\begin{theorem}
	\label{thm:L 1 to L 1 loc estimate}
	Assume~\eqref{eq:K0}--\eqref{eq:K3} and let $f \in L^1 (\Omega,\delta^\gamma)$. Then, there exists a unique function $u \in L^1_{loc}(\Omega)$ satisfying
	\begin{equation}
		\label{eq:vwf}
		\int_ \Omega u \psi = \int_ \Omega f \, \Green(\psi) \qquad \text{for any } \psi \in L^\infty_c (\Omega).
	\end{equation}
	This function is precisely $u=\Green(f)$
	and it satisfies
	\begin{equation*}
		\int_K |u| \le \| f \delta^\gamma \|_{L^1(\Omega)} \left\|  \frac {\Green (\chi_K)}{\delta^\gamma}  \right\|_{L^\infty(\Omega)} \qquad \text{for any } K \Subset \Omega.
	\end{equation*}
\end{theorem}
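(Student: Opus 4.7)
The plan is to verify that the candidate $u = \Green(f)$ works, establish the quantitative estimate along the way by Fubini, and then deduce uniqueness from the richness of test functions in $L^\infty_c(\Omega)$.

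First, I would show that $u:=\Green(f)$ is a well-defined element of $L^1_{loc}(\Omega)$: this is exactly conclusion~\eqref{weighted l1-local l1} of Theorem~\ref{thm:functional}. Next, for any $\psi\in L^\infty_c(\Omega)$ with $\supp\psi\subseteq K\Subset\Omega$, I want to expand
\begin{equation*}
\int_\Omega \Green(f)\,\psi\;dx = \int_\Omega\!\!\int_\Omega \G(x,y)f(y)\,dy\,\psi(x)\,dx
\end{equation*}
and swap the order of integration. To justify Fubini, I would estimate the integrand in absolute value by $\G(x,y)|f(y)||\psi(x)|$ and bound its double integral, using the symmetry~\eqref{eq:K1}, by
\begin{equation*}
\|\psi\|_{L^\infty(\Omega)}\int_\Omega |f(y)|\,\Green(\chi_K)(y)\;dy
\leq \|\psi\|_{L^\infty(\Omega)}\,\bigl\|\delta^{-\gamma}\Green(\chi_K)\bigr\|_{L^\infty(\Omega)} \int_\Omega |f|\delta^\gamma,
\end{equation*}
which is finite thanks to~\eqref{compact infty-almost derivative} and the hypothesis $f\in L^1(\Omega,\delta^\gamma)$. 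Once Fubini is applicable, using $\G(x,y)=\G(y,x)$ yields \eqref{eq:vwf}. The same computation applied with $\psi=\sign(u)\chi_K$ (approximated by $L^\infty_c$ functions or just used directly as a bound on $|u|$) gives the claimed estimate
\begin{equation*}
\int_K |\Green(f)|\;dx\leq \|f\delta^\gamma\|_{L^1(\Omega)}\,\bigl\|\delta^{-\gamma}\Green(\chi_K)\bigr\|_{L^\infty(\Omega)}.
\end{equation*}

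For uniqueness, if $u_1,u_2\in L^1_{loc}(\Omega)$ both satisfy~\eqref{eq:vwf} for the same $f$, then $\int_\Omega(u_1-u_2)\psi = 0$ for every $\psi\in L^\infty_c(\Omega)$. Testing against $\psi = \sign(u_1-u_2)\chi_K$ for arbitrary $K\Subset\Omega$ (which lies in $L^\infty_c(\Omega)$) gives $\int_K |u_1-u_2|=0$, hence $u_1=u_2$ almost everywhere on each compact subset and therefore a.e.\ on $\Omega$.

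The only genuinely delicate point is the applicability of Fubini, and it is already handled by the $\delta^\gamma$-weighted bound on $\Green(\chi_K)$ in~\eqref{compact infty-almost derivative}; with that tool in hand, the proof is essentially a two-line computation plus a standard duality argument for uniqueness. I would therefore present it compactly, emphasizing the role of~\eqref{eq:K1} and~\eqref{compact infty-almost derivative}, and noting that the same scheme will be reused later for the measure-data extension sketched in Remark~\ref{rem:measure data}.
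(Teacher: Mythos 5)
Your proposal is correct and follows essentially the same route as the paper: take $u=\Green(f)$, use \eqref{weighted l1-local l1} for local integrability, symmetry \eqref{eq:K1} plus \eqref{compact infty-almost derivative} to justify the exchange of integrals and obtain both \eqref{eq:vwf} and the weighted estimate, and test with $\sign(u_1-u_2)\chi_K$ for uniqueness. Your direct Tonelli bound on the double integral is in fact a slightly cleaner justification of Fubini than the paper's appeal to approximation by $f\in L^\infty_c(\Omega)$, but it is the same argument in substance.
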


\begin{proof}
	Let us first notice that $u=\Green(f)\in L^1_{loc}(\Omega)$ in view of~\eqref{weighted l1-local l1}.
	It formally satisfies~\eqref{eq:vwf} as a consequence of~\eqref{eq:K1} by the Fubini's theorem. This formal bounds are indeed rigorous for $f \in L^\infty_c (\Omega)$. Furthermore, due to the bounds provided by Theorem~\ref{thm:functional} one can pass to the limit in approximations.

We now focus on uniqueness. Let $u_1, u_2$ be two solutions to~\eqref{eq:vwf}. Then
	\begin{equation*}
		\int_\Omega(u_1-u_2)\,\psi = 0, \qquad \text{for any } \psi \in L^\infty_c(\Omega).
	\end{equation*}
	Let $K \Subset \Omega$ and $\psi = \sign(u_1-u_2) \chi_K \in L^\infty_c (\Omega)$. Using this as a test function, we deduce
	\begin{equation*}
		\int_K |u_1 - u_2| =0.
	\end{equation*}
	Since this holds for every $K \Subset \Omega$, we have that $u_1 = u_2$ a.e. in $\Omega$.
	Also, we have that
	\begin{equation*}
	\int_ K |u| \leq \int_K \big| \Green(f) \big| \le \int_\Omega |f| \, \Green(\chi_K) \le \| f \delta^\gamma \|_{L^1(\Omega)} \left\|  \frac {\Green (\chi_K)}{\delta^\gamma}  \right\|_{L^\infty(\Omega)}
	\end{equation*}
	which is a nontrivial inequality thanks to~\eqref{compact infty-almost derivative}.
	
%
\end{proof}

\subsection{Optimal class of data and a lower Hopf estimate}

\begin{theorem}[Lower Hopf]
	\label{thm:lower Hopf}
	Assume~\eqref{eq:K0}--\eqref{eq:K3}. There exists $c>0$ such that, for all $f \ge 0$,
	\begin{equation*}
		\Green (f) (x) \ge c \delta(x)^ \gamma \int_{ \Omega } f(y) \, \delta (y)^\gamma \; dy,
		\qquad x\in\Omega.
	\end{equation*}
\end{theorem}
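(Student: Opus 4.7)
The plan is to derive a pointwise lower bound on the Green kernel $\G(x,y)$ of the form $\G(x,y) \ge c\, \delta(x)^\gamma \delta(y)^\gamma$ and then integrate against $f$. Everything follows directly from the two-sided estimate \eqref{eq:K2}, exploiting the boundedness of $\Omega$.

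Concretely, set $D:=\mathrm{diam}(\Omega)$. By \eqref{eq:K2} there is $c_0>0$ with
\begin{equation*}
\G(x,y) \ge c_0\, |x-y|^{2s-n} \left( \frac{\delta(x)\delta(y)}{|x-y|^2} \wedge 1 \right)^\gamma,
\qquad x,y\in\Omega.
\end{equation*}
Since $2s-n \le 0$ by \eqref{eq:K3} and $|x-y|\le D$, the factor $|x-y|^{2s-n}$ is bounded below by $D^{2s-n}$. For the $\min$ factor I would split into two cases: if $\delta(x)\delta(y)\le|x-y|^2$ then $(\delta(x)\delta(y)/|x-y|^2)^\gamma \ge D^{-2\gamma}\,(\delta(x)\delta(y))^\gamma$, while if $\delta(x)\delta(y)>|x-y|^2$ the min equals $1$ and one still has $1\ge D^{-2\gamma}(\delta(x)\delta(y))^\gamma$ since $\delta(x),\delta(y)\le D$. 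In either case
\begin{equation*}
\left( \frac{\delta(x)\delta(y)}{|x-y|^2} \wedge 1 \right)^\gamma \ge D^{-2\gamma}\,\delta(x)^\gamma \delta(y)^\gamma.
\end{equation*}

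Combining the two bounds gives $\G(x,y)\ge c\,\delta(x)^\gamma\delta(y)^\gamma$ with $c = c_0\,D^{2s-n-2\gamma}$. Multiplying by $f(y)\ge 0$ and integrating over $\Omega$ yields exactly the claimed inequality
\begin{equation*}
\Green(f)(x) \ge c\,\delta(x)^\gamma \int_\Omega f(y)\,\delta(y)^\gamma\,dy.
\end{equation*}

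There is essentially no hard step here: the only point requiring minor care is the case analysis in the $\min$ factor, which is trivial once the diameter of $\Omega$ is used as a universal scale. Symmetry \eqref{eq:K1} is not needed, and no regularity beyond \eqref{eq:K2}--\eqref{eq:K3} enters the argument.
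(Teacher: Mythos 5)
Your proof is correct and follows essentially the same route as the paper: both reduce the claim to the pointwise kernel bound $\G(x,y)\ge c\,(\delta(x)\delta(y))^\gamma$ and derive it from \eqref{eq:K2} using the boundedness of $\Omega$ together with $2s-n\le 0$ and $\gamma>0$. The only difference is cosmetic — you argue directly with an explicit constant $c_0D^{2s-n-2\gamma}$, whereas the paper establishes the same kernel inequality by contradiction.
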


\begin{proof}
	By assumption~\eqref{eq:K0}, it is sufficient to prove that
	\begin{equation}
		\label{eq:Hopf lower bound of kernel}
		\G(x,y) \ge c \big(  {\delta(x) \delta(y)} \big)^\gamma,
		\qquad x,y\in\Omega.
	\end{equation}
	Assume, towards a contradiction, this is not true. Then, there exist sequences of points ${(x_j)}_{j\in\N},{(y_j)}_{j\in\N} \subseteq \Omega$ such that
	\begin{equation*}
		\frac{\G(x_j,y_j)}{\delta(x_j)^\gamma \delta (y_j)^\gamma} \to 0,
		\qquad\text{as }j\uparrow\infty.
	\end{equation*}
	By assumption~\eqref{eq:K2}, either
	\begin{equation*}
		|x_j-y_j|^{2s-n-2\gamma} \to 0,
		\qquad\text{as }j\uparrow\infty,
	\end{equation*}
	which is not possible since $\Omega$ is bounded and $2s-n-2\gamma\leq 0$ (\textit{cf.}~\eqref{eq:K3}), or
	\begin{equation*}
		\frac{|x_j-y_j|^{2s-n}}{\delta(x_j)^\gamma \delta(y_j)^\gamma} \to 0,
		\qquad\text{as }j\uparrow\infty.
	\end{equation*}
	Since $\Omega$ is bounded, $\delta$ is bounded, and hence we should have that $|x_j-y_j|^{2s-n} \to 0$ as $j\uparrow\infty$ (\textit{cf.}~\eqref{eq:K3}). Again, this is not possible. We arrive to a contradiction and~\eqref{eq:Hopf lower bound of kernel} is proven.
\end{proof}

\begin{corollary}
	\label{lem:estimate G of characteristic compact support}
	Assume~\eqref{eq:K0}--\eqref{eq:K3}
	and let $K \Subset \Omega$. Then
	\begin{equation*}
		\Green (\chi_K) (x) \asymp \delta(x)^\gamma,
		\qquad x\in\Omega.
	\end{equation*}
\end{corollary}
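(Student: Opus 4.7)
The plan is to obtain the two-sided estimate by combining two results already established in this section: the Lower Hopf bound (Theorem~\ref{thm:lower Hopf}) for the lower inequality, and property~\eqref{compact infty-almost derivative} of Theorem~\ref{thm:functional} for the upper inequality. There is essentially no substantive obstacle; this is a direct corollary packaging of what we have already proved.

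For the lower bound, I would apply Theorem~\ref{thm:lower Hopf} to the nonnegative function $f=\chi_K$. This yields
\begin{equation*}
\Green(\chi_K)(x) \ge c\,\delta(x)^\gamma \int_K \delta(y)^\gamma \, dy,
\qquad x \in \Omega.
\end{equation*}
Since $K \Subset \Omega$, the distance $\mathrm{dist}(K,\partial\Omega)$ is a strictly positive constant, so $\delta \ge \mathrm{dist}(K,\partial\Omega)$ on $K$ and consequently $\int_K \delta^\gamma \ge \mathrm{dist}(K,\partial\Omega)^\gamma |K| > 0$. Hence $\Green(\chi_K)(x) \ge c'\,\delta(x)^\gamma$ for some $c'>0$ depending on $K$.

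For the upper bound, I would simply invoke property~\eqref{compact infty-almost derivative} of Theorem~\ref{thm:functional}: since $\chi_K \in L^\infty_c(\Omega)$, one has $\Green(\chi_K) \in \delta^\gamma L^\infty(\Omega)$, i.e.\ there exists $C>0$ with $\Green(\chi_K)(x) \le C \delta(x)^\gamma$ on $\Omega \setminus K$. On $K$ itself, $\delta$ is bounded below by $\mathrm{dist}(K,\partial\Omega)>0$, so the global bound $\Green(\chi_K) \le C''$ coming from \eqref{infty-infty} converts to $\Green(\chi_K)(x) \le C''' \delta(x)^\gamma$ there as well. Combining the two inequalities gives the desired two-sided comparison $\Green(\chi_K)(x) \asymp \delta(x)^\gamma$ on all of $\Omega$.
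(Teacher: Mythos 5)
Your proof is correct and follows exactly the route the paper intends: the corollary is stated without proof immediately after Theorem~\ref{thm:lower Hopf} precisely because the lower bound is that theorem applied to $f=\chi_K$ and the upper bound is~\eqref{compact infty-almost derivative} (supplemented, as you rightly note, by~\eqref{infty-infty} near $K$ where $\delta$ is bounded below).
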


\begin{remark}
	\label{rem:no solution outside L1 weight}
	If $ 0 \le f \notin L^1 (\Omega,\delta^\gamma)$ and $f_k=f\wedge k,\ k\in\N,$ then, for every $x \in \Omega$,
	\begin{equation*}
		\Green(f_k) (x) \ge c \delta(x)^\gamma \int_\Omega f_k(y) \delta(y)^\gamma\;dy \to + \infty
	\end{equation*}
	due to the monotone convergence theorem.
	
\end{remark}

Due to Theorem~\ref{thm:lower Hopf} and Remark~\ref{rem:no solution outside L1 weight} we have shown that $L^1(\Omega,\delta^\gamma)$ is the \textit{optimal class of data}.

\subsection{Uniform integrability over compacts}

Let us show that $\Green$ maps $L^1$-bounded sequences into $L^1$-weakly pre-compact sequences.

\begin{lemma}
	\label{lem:uniform integrability over compacts}
	Assume~\eqref{eq:K0}-\eqref{eq:K3} and let $f \in L^1 (\Omega,\delta^\gamma)$, $K\Subset \Omega$. Then, for $A \subset K$,
	\begin{equation*}
		\int_A |\Green(f)| \le C_{K,\beta} |A|^\beta \| f \delta^\gamma \|_{L^1 (\Omega)}, \qquad \textrm{ for any } 0 < \beta < \frac {2s} {n}.
	\end{equation*}
	In particular, for any $K \Subset \Omega$, $\Green$ maps bounded sequences in $L^1 (\Omega, \delta^\gamma)$ into uniformly integrable sequences in $K$.
\end{lemma}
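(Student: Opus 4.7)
The plan is to reduce, via Fubini's theorem and the symmetry~\eqref{eq:K1}, to the pointwise estimate
\begin{equation*}
\Green(\chi_A)(y)\le C_{K,\beta}\,\delta(y)^\gamma\,|A|^\beta, \qquad y\in\Omega,\ A\subset K,
\end{equation*}
for any $0<\beta<2s/n$, and then integrate against $|f|$ to get
\begin{equation*}
\int_A|\Green(f)|\le\int_\Omega|f(y)|\,\Green(\chi_A)(y)\,dy\le C_{K,\beta}\,|A|^\beta\,\|f\delta^\gamma\|_{L^1(\Omega)}.
\end{equation*}

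To prove the pointwise bound I would fix an intermediate open set $K\Subset K'\Subset\Omega$, set $d_0:=\mathrm{dist}(K,\partial K')>0$ and $d_1:=\mathrm{dist}(K',\partial\Omega)>0$, and split according to whether $y\in K'$ or $y\in\Omega\setminus K'$. If $y\in K'$ then $\delta(y)\ge d_1$, and the trivial bound $(\,\cdot\wedge 1)^\gamma\le 1$ in~\eqref{eq:K2} gives $\G(x,y)\le C|x-y|^{2s-n}$; the symmetric decreasing rearrangement inequality then yields $\int_A|x-y|^{2s-n}\,dx\le C_n|A|^{2s/n}$ (here $2s>0$ is used), so after multiplying and dividing by $\delta(y)^\gamma$ one gets $\Green(\chi_A)(y)\le C_{K'}\delta(y)^\gamma|A|^{2s/n}$.

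If instead $y\in\Omega\setminus K'$ then $|x-y|\ge d_0$ for all $x\in A$, and using the opposite bound $(\,\cdot\wedge 1)^\gamma\le(\,\cdot)^\gamma$ in~\eqref{eq:K2},
\begin{equation*}
\G(x,y)\le C\,\delta(x)^\gamma\delta(y)^\gamma|x-y|^{2s-n-2\gamma}\le C_{K,K'}\,\delta(y)^\gamma,
\end{equation*}
since $\delta(x)$ and $|x-y|$ are pinched between positive constants depending only on $K,K',\Omega$. Integrating in $x$ gives $\Green(\chi_A)(y)\le C\,|A|\,\delta(y)^\gamma$. Combining the two cases and absorbing $|A|$ and $|A|^{2s/n}$ into $|A|^\beta$ via $|A|\le|K|$, which is legal for any $\beta\le 2s/n$, establishes the target pointwise bound.

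The uniform integrability statement follows immediately: if $\|f_n\delta^\gamma\|_{L^1(\Omega)}\le M$, given $\varepsilon>0$ one picks $\eta>0$ with $C_{K,\beta}M\eta^\beta<\varepsilon$, so that $|A|<\eta$ forces $\int_A|\Green(f_n)|<\varepsilon$ uniformly in $n$. The main technical obstacle is producing the full $\delta(y)^\gamma$ factor in the pointwise bound uniformly in $y\in\Omega$: when $y$ is near $\partial\Omega$ this factor must be read off the boundary decay of the kernel, while when $y$ is bounded away from $\partial\Omega$ it is there essentially for free but one has to pay the price of the near-diagonal singularity of $\G$. The two regimes are balanced by the choice of the intermediate compact $K'$ and by separately exploiting the two upper bounds contained in the minimum of~\eqref{eq:K2}.
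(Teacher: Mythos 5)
Your argument is correct, and its skeleton (duality/Fubini to reduce to a bound on $\int_A \G(x,y)\,dx$ uniform in $y$, followed by a near/far dichotomy in $y$ relative to the compact set) matches the paper's. The one genuinely different ingredient is how the small factor $|A|^\beta$ is produced on the near-diagonal part: the paper applies H\"older's inequality with an exponent $1<p<\tfrac{n}{n-2s}$ together with a uniform-in-$y$ bound on $\int_K |\G(x,y)/\delta(y)^\gamma|^p\,dx$, which yields $|A|^{1/p'}$ and hence only the open range $\beta<2s/n$; you instead invoke the rearrangement bound $\int_A |x-y|^{2s-n}\,dx\le C_n|A|^{2s/n}$, which reaches the endpoint $\beta=2s/n$ directly and then trivially implies the stated range. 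Your version is thus marginally sharper and avoids the choice of an auxiliary integrability exponent, at the cost of introducing the intermediate compact $K'$; the paper's H\"older route generalises more readily to kernels for which only $L^p$ bounds (rather than pointwise radial bounds) are available, but for kernels satisfying~\eqref{eq:K2} both work equally well. All the individual estimates you use (the two branches of the minimum in~\eqref{eq:K2}, the lower bounds on $\delta(y)$ and $|x-y|$ in the respective regimes, and the absorption $|A|\vee|A|^{2s/n}\le C_K|A|^\beta$) are valid.
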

\begin{proof}
	We have that
	\begin{align*}
		\int_{A} |\Green(f)| &\le \int_A \int_\Omega \G(x,y) f(y) \; dy \; dx
		= \int_\Omega |f(y)| \left( \int_A \G(x,y) dx \right) dy \\
		&= \int_\Omega |f(y)| \delta(y)^\gamma \left( \int_A \frac{ \G(x,y) }{\delta(y)^\gamma} \; dx \right) dy
	\end{align*}
	We take $ 1 < p < \frac{n}{n-2s}$. Due to the Hölder's inequality
	\begin{equation*}
		\int_A \frac{\G(x,y)}{\delta(y)^\gamma} \; dx \le
		|A|^{\frac 1 {p'}} \left( \int_K \left| \frac{\G(x,y)}{\delta(y)^\gamma} \right|^p dx \right)^{\frac 1 p},\
		\qquad p'=\frac{p}{p-1}>\frac{n}{2s}.
	\end{equation*}
	We estimate this last integral to recover $C_K$. For any $y\in\Omega$ such that $\mathrm{dist}(y,K) < \mathrm{dist}(K,\partial \Omega) / 2$ we have that
	\[
	\delta(y) = \mathrm{dist}(y,\partial \Omega) > \mathrm{dist}(K,\partial \Omega) - \mathrm{dist}(y,K) > \frac 1 2 \mathrm{dist}(K,\partial \Omega)
	\]
	and hence
	\begin{multline*}
		 \int_K \left| \frac{\G(x,y)}{\delta(y)^\gamma} \right|^p dx  \le \left( \frac{ \mathrm{dist}(K,\partial \Omega)}2  \right)^{-\gamma p} \int_K \G(x,y)^p dy \\
		 \le\ C \left( \frac{\mathrm{dist}(K,\partial \Omega)}2  \right)^{-\gamma p} \int_K |x-y|^{-p(n-2s)} dx
		 \le C \left( \frac{\mathrm{dist}(K,\partial \Omega)}2  \right)^{-\gamma p}
	\end{multline*}
	since $p(n-2s) < n$, where $C$ depends on $p$ but does not depend on $K$.
	One the other hand, if $y$ is such that $\mathrm{dist}(y,K) \ge \mathrm{dist}(K, \partial \Omega)/2$, for $x \in K$ we have $|x-y| > \mathrm{dist}(y,K)$ and so we compute
	\begin{multline*}
		\int_K \left| \frac{\G(x,y)}{\delta(y)^\gamma} \right|^p dx
		\le C \int_K \frac{\delta(x)^{\gamma p}}{|x-y|^{p(n-2s+2\gamma)}} dx \\
		\le\ C \left( \frac{\mathrm{dist}(K,\partial \Omega)}{2}  \right)^{p(-n+2s-2\gamma)} \int_K \delta(x)^{\gamma p} dx
		\le C\left(\frac{\mathrm{dist}(K,\partial \Omega)}{2}\right)^{p(-n+2s-2\gamma)}
	\end{multline*}
	where $C$ depends on $p$ but does not depend on $K$.
	This completes the proof.
\end{proof}

\subsection{Measure data and continuous solutions}

\label{sec:measure data}

Under mild assumptions on the Green kernel $\G$, it is possible
to improve~\eqref{infty-infty} and~\eqref{compact infty-almost derivative} to
higher regularity of solutions. By duality, this allows more general data in~\eqref{eq:FDE}
and we are particularly interested in measure data. For this reason, let us assume that
\begin{align}\label{eq:K4}\tag{K4}
  \textrm{For any sequence }\Omega \ni x_j \to x \textrm{ as } j\uparrow \infty \textrm{ we have } \lim_{j\uparrow\infty}\G(x_j, \cdot) = \G(x, \cdot) \textrm{ a.e. in } \Omega.
\end{align}
\begin{theorem}
Assume~\eqref{eq:K0}--\eqref{eq:K4}. Then the operator $\Green$ maps
\begin{align*}
	L^\infty (\Omega) &\ \longrightarrow\ C(\overline \Omega) \\
	L^\infty_c (\Omega) &\ \longrightarrow\ \delta^\gamma C(\overline \Omega).
\end{align*}
\end{theorem}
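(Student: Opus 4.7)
The plan is to combine Vitali's convergence theorem, driven by the pointwise continuity assumption \eqref{eq:K4}, for the interior, with an elementary integration of the kernel bound \eqref{eq:K2} to control the boundary decay.

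First, take $f \in L^\infty(\Omega)$ and set $u = \Green(f)$. I begin with interior continuity: for $x \in \Omega$ and $x_j \to x$ in $\Omega$, \eqref{eq:K4} gives $\G(x_j, y) f(y) \to \G(x, y) f(y)$ for a.e.\ $y$. Equi-integrability follows from \eqref{eq:K2} (using $\min(\,\cdot\,,1)^\gamma \le 1$) together with a Hardy--Littlewood rearrangement: for every measurable $E \subset \Omega$,
\begin{equation*}
\int_E |\G(x_j, y) f(y)|\, dy \le C \|f\|_{L^\infty} \int_E |x_j - y|^{2s - n}\, dy \le C \|f\|_{L^\infty} |E|^{2s/n},
\end{equation*}
uniformly in $j$. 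Vitali's theorem then yields $u(x_j) \to u(x)$.

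To extend $u$ continuously by $0$ across $\partial\Omega$, I split $\Omega$ at $|x-y| = \delta(x)$ and apply \eqref{eq:K2}: on $\{|x-y| \le \delta(x)\}$ I bound $\min(\,\cdot\,,1)^\gamma \le 1$; on $\{|x-y| > \delta(x)\}$, since $\delta(y) \le \delta(x) + |x-y| \le 2|x-y|$, I bound $\min(\delta(x)\delta(y)/|x-y|^2, 1)^\gamma \le (2\delta(x)/|x-y|)^\gamma$. Integrating each piece yields
\begin{equation*}
|u(x)| \le C \|f\|_{L^\infty}\, \delta(x)^{\gamma \wedge 2s},
\end{equation*}
(with a logarithmic correction if $\gamma = 2s$), so $u(x) \to 0$ as $x \to \partial\Omega$, closing the first mapping.

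For the second mapping, pick $f \in L^\infty_c(\Omega)$ with $K := \supp f \Subset \Omega$ and set $v := \Green(f)/\delta^\gamma$. Boundedness of $v$ is \eqref{compact infty-almost derivative}, while interior continuity of $v$ follows from interior continuity of $u$ and the positivity of $\delta^\gamma$ in $\Omega$. For the extension to $\partial\Omega$, observe that when $\delta(x) < \tfrac{1}{2}\mathrm{dist}(K, \partial\Omega)$ and $y \in K$, \eqref{eq:K2} collapses to
\begin{equation*}
\frac{\G(x,y)}{\delta(x)^\gamma} \le C \frac{\delta(y)^\gamma}{|x-y|^{n-2s+2\gamma}},
\end{equation*}
which is a uniformly bounded integrable majorant in $y \in K$ for $x$ in a boundary neighbourhood; combined with \eqref{eq:K4}, a dominated convergence argument along $x_j \to x_0$ produces the limit of $v(x_j)$. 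The main obstacle is precisely this last step at $x_0 \in \partial\Omega$: \eqref{eq:K4} only furnishes pointwise convergence of $\G(x_j, \cdot)$ for $x_j$ staying inside $\Omega$, whereas here $x_j \to x_0 \in \partial\Omega$. One must therefore verify existence of the pointwise limit of $\G(x_j, y)/\delta(x_j)^\gamma$ for $y \in K$ — a Martin-kernel-type statement; I expect this to be extracted by exploiting the explicit algebraic form of \eqref{eq:K2} on $K$, where $y$ is bounded away from $\partial\Omega$ and the ratio reduces to the manifestly convergent quantity $\delta(y)^\gamma/|x_j - y|^{n-2s+2\gamma}$.
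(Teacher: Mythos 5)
Your treatment of the first mapping is correct and in substance the same as the paper's: the paper replaces your Vitali argument by a uniform $L^p$ bound ($p<n/(n-2s)$) on $\G(x_j,\cdot)$ plus weak compactness, but both reduce to ``uniform integrability $+$ a.e.\ convergence from \eqref{eq:K4}''. Your explicit boundary decay $|u|\le C\|f\|_{L^\infty}\delta^{\gamma\wedge 2s}$ (log at $\gamma=2s$) is a nice concrete way to handle continuity at $\partial\Omega$ and is consistent with the case $\beta=0$ of Theorem~\ref{thm:range of exponents}; the paper instead tacitly invokes \eqref{eq:K4} up to the boundary.

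For the second mapping you have correctly isolated the one delicate point, but your proposed resolution cannot work. Assumption \eqref{eq:K2} is only a two-sided \emph{comparison}, not an identity, so the ratio $\G(x_j,y)/\delta(x_j)^\gamma$ is merely comparable to $\delta(y)^\gamma/|x_j-y|^{n-2s+2\gamma}$ and need not converge as $x_j\to x_0\in\partial\Omega$: the kernel
\begin{equation*}
\G (x,y) = \Bigl( 2 + \sin \tfrac{1}{ \delta (x) } \Bigr) \Bigl( 2 + \sin \tfrac{1}{ \delta (y) } \Bigr) \frac{1}{|x-y|^{n-2s} } \Bigl( \frac{ \delta(x) \delta(y) }{|x-y|^2} \wedge 1 \Bigr)^\gamma,
\end{equation*}
exhibited by the authors in Section~\ref{sec:sharp boundary behaviour good data}, satisfies \eqref{eq:K0}--\eqref{eq:K4} yet $\G(x_j,y)/\delta(x_j)^\gamma$ has no limit as $\delta(x_j)\to0$. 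Existence of that limit is exactly the content of the \emph{additional} hypothesis \eqref{eq:defn G nu} introduced later, and the paper's own proof of this theorem quietly sidesteps the issue: it only establishes continuity of $\delta^{-\gamma}\Green(f)$ at points $x$ with $\delta(x)>0$, together with the uniform bound \eqref{compact infty-almost derivative} near $\partial\Omega$. So the correct reading of the second mapping under \eqref{eq:K0}--\eqref{eq:K4} alone is ``$\delta^{-\gamma}\Green(f)$ is bounded on $\Omega$ and continuous in $\Omega$ up to any interior point of a boundary collar''; continuity of the quotient \emph{up to} $\partial\Omega$ genuinely requires \eqref{eq:defn G nu}. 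Everything else in your argument (the majorant $\delta(y)^\gamma/|x-y|^{n-2s+2\gamma}$ on $K=\supp f$, dominated convergence for interior limit points) is fine.
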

\begin{proof}
	In view of~\eqref{infty-infty} and~\eqref{compact infty-almost derivative},
	we just need to justify the continuity claim. Let us consider $f\in L^\infty(\Omega)$. To prove continuity we select an $x\in\Omega$,
	and ${(x_j)}_{j\in\N}\subset\overline\Omega\text{ such that }x_j\to x\text{ as }j\uparrow\infty$.
	By assumption~\eqref{eq:K4} we know that $\G(x_j,\cdot)\to\G(x,\cdot)$ a.e. in $\Omega$.
	Moreover, let us notice that ${\big(\G(x_j,\cdot)\big)}_{j\in\N}\subset L^p(\Omega)$, $p\in[1,n/(n-2s))$ is uniformly bounded,
	since
	\begin{align*}
	\int_\Omega {\G(x_j,y)}^p\;dy \leq \int_\Omega\frac{dy}{{|x_j-y|}^{(n-2s)p}}
	\end{align*}
	and $\Omega$ is bounded.
	Therefore, so is ${\big( \G (x_j, \cdot) f \big)}_{j\in\mathbb N}$. Due to the weak compactness in reflexive spaces it is convergent. Applying~\eqref{eq:K4} we can compute the pointwise limit
	\begin{align*}
	\Green(f)(x_j)=\int_\Omega \G(x_j,y)\,f(y)\;dy\longrightarrow\int_\Omega \G(x,y)\,f(y)\;dy = \Green(f)(x)
	\qquad\text{as }j\uparrow\infty.
	\end{align*}
	This proves that $\Green(f)\in C(\overline{\Omega})$.
	
	Let $f \in L^\infty_c (\Omega)$. Since we have already proven that $\Green(f) \in C(\Omega)$, we only need to prove that that $\delta^{-\gamma} \Green(f)$ is continuous on some small neighbourhood of $\partial \Omega$.
	Consider $\ee > 0$ small enough so that  $K=\supp(f) \subset \{\delta\ge 2\varepsilon\}\Subset\Omega$. 	
	Let $U = \{\delta<\varepsilon\}$ be the small neighbourhood of $\partial \Omega$. We have that
	\begin{align*}
		\int_K \frac{{\G(x,y)}^p}{\delta(x_j)^{\gamma p}}\;dy \leq \int_K \frac{{\delta(y)}^{\gamma p}}{{|x-y|}^{(n-2s+2\gamma)p}} dy \le C_\ee, \qquad \forall x \in U.
	\end{align*}
	Select now an $x \in U$ and let $\Omega \in x_j \to x$ as $j \uparrow +\infty$. Since $U$ is open, then $x_j \in U$ for $j$ large enough.
	Again, by weak compactness,
	\begin{multline*}
	\delta(x_j)^{-\gamma}\Green(f)(x_j)=\int_K \frac{\G(x_j,y)}{{\delta(x_j)}^{\gamma}}\,f(y)\;dy \\
	\longrightarrow\int_K \frac{\G(x,y)}{{\delta(x)}^{\gamma}}\,f(y)\;dy = \delta(x)^{-\gamma}\Green(f)(x)
	\qquad\text{as }j\uparrow\infty.
	\end{multline*}
	This completes the proof.
\end{proof}

With this new machinery, we can justify the intuition given by Remark~\ref{rem:measure data}.
\begin{theorem}
	Assume~\eqref{eq:K0}--\eqref{eq:K4}. Then, $\Green$ maps
	\begin{align}
			\cM(\Omega)\ & \longrightarrow\ L^1(\Omega) \label{measure-l1}\\
		\cM(\Omega,\delta^\gamma)\ & \longrightarrow\ L^1_{loc}(\Omega).  \label{weighted measure-local l1}
	\end{align}
	Furthermore, for every $\mu \in \WM$, $u = \Green(\mu)$
	is the unique $u \in L^1_{loc} (\Omega)$
	such that
	\begin{equation}
			\label{eq:weak-dual formulation measures}
			\int_\Omega u \psi = \int_\Omega \Green(\psi) \; d \mu \qquad \text{for any } \psi \in L^\infty_c (\Omega).
	\end{equation}
	holds. Moreover, it satisfies
	\begin{equation*}
		\int_K |u| \le \left( \int_\Omega \delta^\gamma d|\mu| \right) \left\|  \frac {\Green (\chi_K)}{\delta^\gamma}  \right\|_{L^\infty(\Omega)} \qquad \text{for any } K \Subset \Omega.
	\end{equation*}
\end{theorem}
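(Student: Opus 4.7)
The plan is to mirror the $L^1$-theory of Theorem~\ref{thm:L 1 to L 1 loc estimate} in the measure setting, using~\eqref{eq:K4} as the extra ingredient that makes the pointwise formula $\Green(\mu)(x)=\int_\Omega \G(x,y)\,d\mu(y)$ sensible. The proof will have four steps: (a) checking that $\G(x,\cdot)$ is $|\mu|$-measurable, (b) establishing the two mapping properties \eqref{measure-l1} and \eqref{weighted measure-local l1} through the Tonelli computation already sketched in Remark~\ref{rem:measure data}, (c) verifying the weak-dual identity \eqref{eq:weak-dual formulation measures} by Fubini, and (d) deducing uniqueness by the same duality argument as in Theorem~\ref{thm:L 1 to L 1 loc estimate}.

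For step (a), I would combine \eqref{eq:K4} with the symmetry \eqref{eq:K1}: pick a countable dense set $\{q_k\}\subset\Omega$; each slice $\G(q_k,\cdot)$ is measurable, and for any $x\in\Omega$ a subsequence $q_{k_j}\to x$ together with \eqref{eq:K4} exhibits $\G(x,\cdot)$ as an a.e.~pointwise limit of measurable functions, hence measurable; joint measurability of $\G$ follows by an analogous approximation in the first variable. This is the main obstacle, because without \eqref{eq:K4} the integral $\int_\Omega \G(x,y)\,d\mu(y)$ is not even defined (a general measure sees sets of Lebesgue measure zero), and this is precisely why this hypothesis was introduced.

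For step (b), for $\mu\in\cM(\Omega)$ and for $\mu\in\cM(\Omega,\delta^\gamma)$ with $K\Subset\Omega$, Tonelli's theorem yields
\begin{align*}
\int_\Omega \int_\Omega \G(x,y)\,d|\mu|(y)\,dx &= \int_\Omega \Green(\chi_\Omega)(y)\,d|\mu|(y) \leq C\,|\mu|(\Omega),\\
\int_K \int_\Omega \G(x,y)\,d|\mu|(y)\,dx &= \int_\Omega \Green(\chi_K)(y)\,d|\mu|(y) \leq C_K \int_\Omega \delta^\gamma\,d|\mu|,
\end{align*}
where I use \eqref{infty-infty} applied to $\chi_\Omega$ for the first line and \eqref{compact infty-almost derivative} applied to $\chi_K$ for the second. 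In particular $\Green(\mu)(x)$ is finite for a.e.~$x\in\Omega$ (resp.~for a.e.~$x\in K$), belongs to the asserted space, and the claimed bound on $\int_K|u|$ is immediate once we factor out $\|\Green(\chi_K)/\delta^\gamma\|_{L^\infty(\Omega)}$.

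For step (c), test functions $\psi\in L^\infty_c(\Omega)$ produce $\Green(\psi)\in\delta^\gamma C(\overline\Omega)$ by the theorem just proved, so $\Green(\psi)$ is Borel measurable and $|\mu|$-integrable whenever $\mu\in\cM(\Omega,\delta^\gamma)$; Fubini then gives
\begin{equation*}
\int_\Omega \Green(\mu)\,\psi = \int_\Omega \int_\Omega \G(x,y)\,\psi(x)\,dx\,d\mu(y) = \int_\Omega \Green(\psi)\,d\mu,
\end{equation*}
where the swap is legitimate by \eqref{eq:K1} and the absolute integrability established in step (b). For step (d), if $u_1,u_2\in L^1_{loc}(\Omega)$ both satisfy \eqref{eq:weak-dual formulation measures}, then $\int_\Omega(u_1-u_2)\psi=0$ for every $\psi\in L^\infty_c(\Omega)$; choosing $\psi=\sign(u_1-u_2)\chi_K$ for an arbitrary $K\Subset\Omega$ forces $u_1=u_2$ a.e., verbatim as in Theorem~\ref{thm:L 1 to L 1 loc estimate}.
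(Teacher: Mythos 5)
Your proposal follows essentially the same route as the paper: well-definedness of $\Green(\mu)$ via~\eqref{eq:K4}, the Tonelli/Fubini estimates of Remark~\ref{rem:measure data} for the mapping properties~\eqref{measure-l1} and~\eqref{weighted measure-local l1}, the absolute-integrability bound coming from~\eqref{compact infty-almost derivative} to justify the swap proving~\eqref{eq:weak-dual formulation measures}, and the $\psi=\sign(u_1-u_2)\chi_K$ duality argument for uniqueness. Your step (a) on measurability is in fact more explicit than the paper's one-line appeal to~\eqref{eq:K4}, so the argument is correct and matches the paper's proof in structure and substance.
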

\begin{proof}
	Due to~\eqref{eq:K4}, $\Green (\mu)$ is now a well-defined integral. Now we can apply~\eqref{eq:measure-l1 estimate} and~\eqref{eq:weighted measure-l1loc estimate}.
	Moreover, also $\int_\Omega \Green(\mu)\,\psi$ is well-defined for any $\psi\in L^\infty_c(\Omega)$. Notice that, in view of~\eqref{compact infty-almost derivative}, we have
	\begin{align*}
	\int_\Omega \int_\Omega \G(x,y) \, |\psi(x)| \; dx \; d|\mu|(y) \leq C_\psi\int_\Omega \delta(y)^\gamma \; d|\mu|(y)<+\infty
	\end{align*}
	so that we can apply the Fubini's theorem and~\eqref{eq:K1} to deduce
	\begin{align*}
	\int_\Omega \Green(\mu)\,\psi=\int_\Omega\Green(\psi)\;d\mu
	\end{align*}
	which proves~\eqref{eq:weak-dual formulation measures}.
	We now show uniqueness. Let $u_1, u_2$ be two solutions to~\eqref{eq:weak-dual formulation measures}. Then
	\begin{equation*}
		\int_\Omega(u_1-u_2)\,\psi = 0, \qquad \text{for any } \psi \in L^\infty_c(\Omega).
	\end{equation*}
	Let $K \Subset \Omega$ and $\psi = \sign(u_1-u_2) \chi_K \in L^\infty_c (\Omega)$. Using this as a test function, we deduce
	\begin{equation*}
		\int_K |u_1 - u_2| =0.
	\end{equation*}
	Since this holds for every $K \Subset \Omega$, we have that $u_1 = u_2$ a.e. in $\Omega$.
	Also, we have that
	\begin{equation*}
		\int_ K |u| \leq \int_K \big| \Green(\mu) \big| \le \int_\Omega \Green(\chi_K) \; d|\mu|
		\le \left( \int_\Omega \delta^\gamma d|\mu| \right) \left\|  \frac {\Green (\chi_K)}{\delta^\gamma}  \right\|_{L^\infty(\Omega)}
	\end{equation*}
	which is a nontrivial inequality thanks to~\eqref{compact infty-almost derivative}.
\end{proof}

\section{Breakdown of the boundary condition in the interior problem}
\label{sec:breakdown boundary condition}
We address now the main questions of this paper,
which is the violation of the boundary data in the optimal theory for the interior problem.
We give precise answers of the anomalous boundary behaviour
in terms of the behaviour of the forcing data $f$.
\begin{figure}[p]
	\centering
    \begin{minipage}[t][12cm][b]{.32\textwidth}
                    \centering
                    \includegraphics[width=\textwidth]{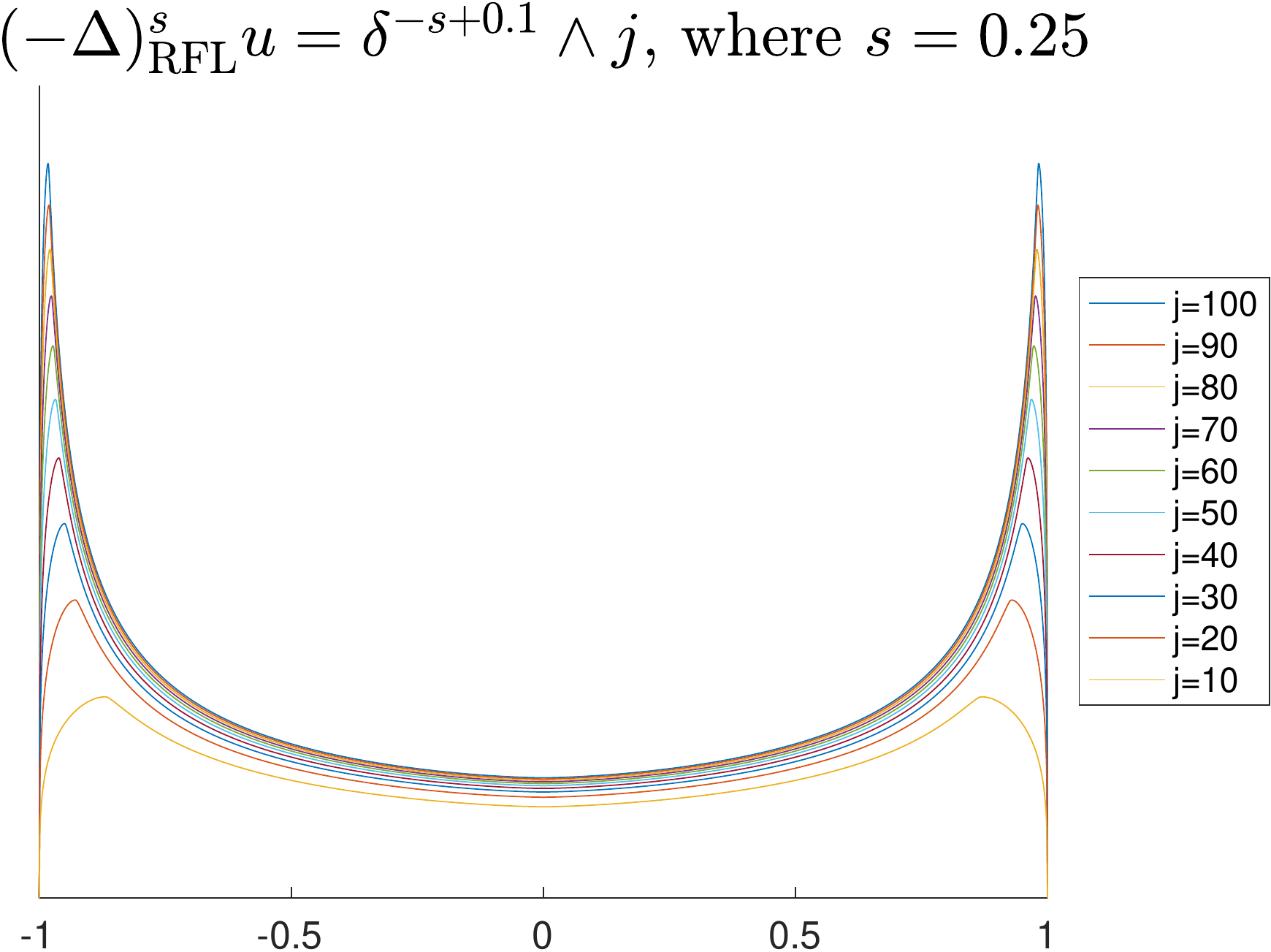}%
                    
                    \includegraphics[width=\textwidth]{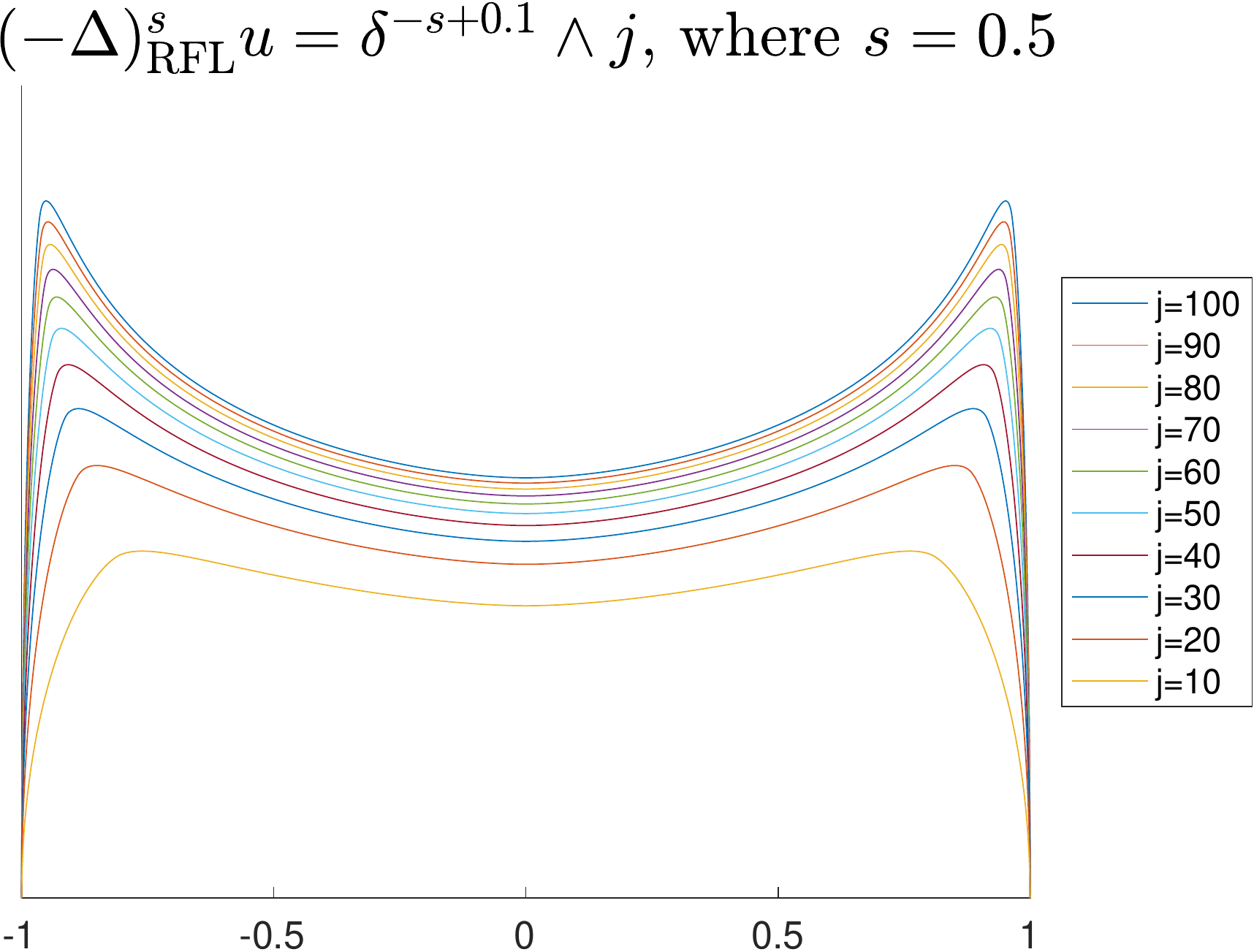}
                    
                    \includegraphics[width=\textwidth]{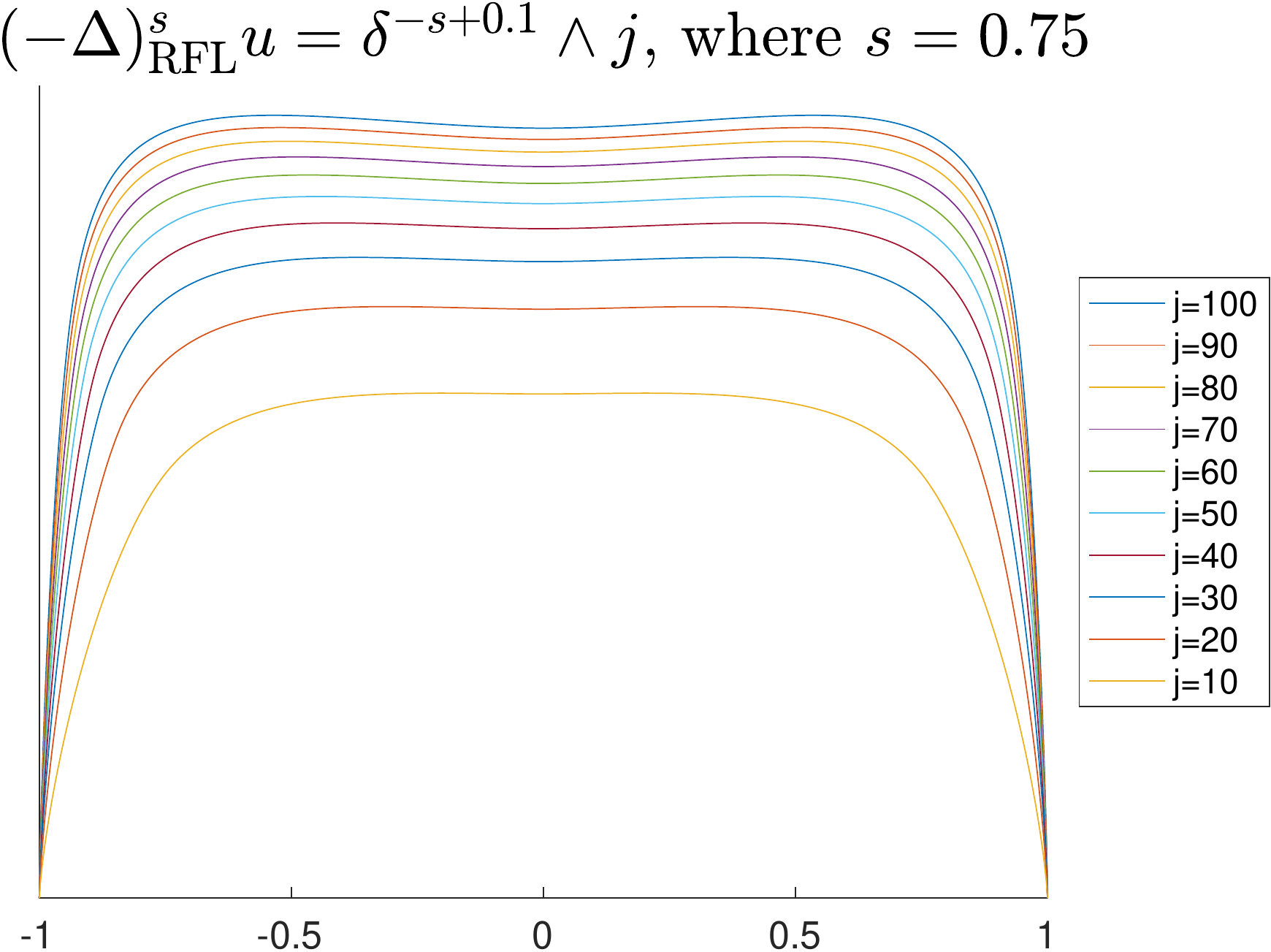}
                    
                    \includegraphics[width=\textwidth]{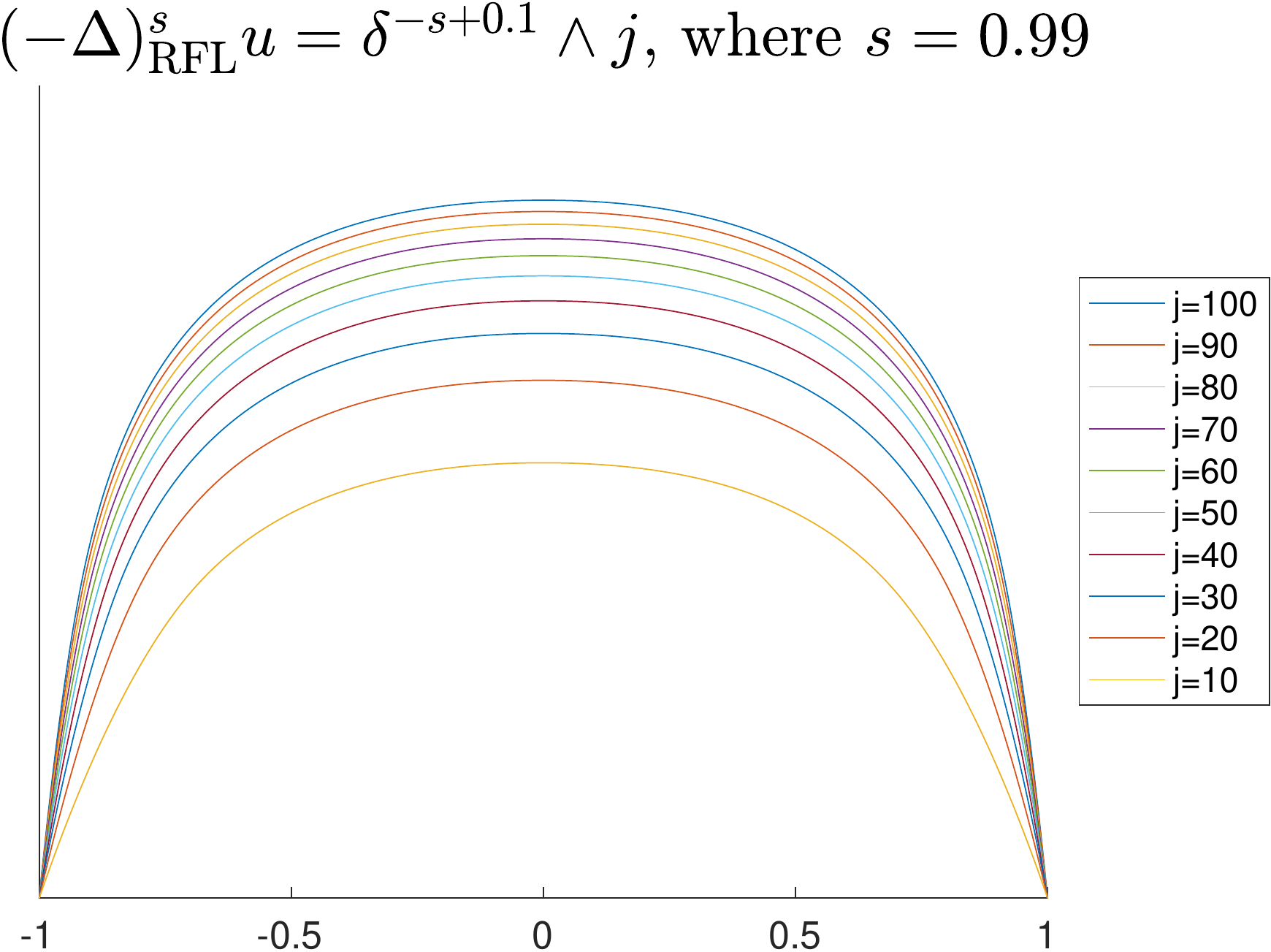}
                    
                    \subcaption{RFL}
                    
                    \label{fig:1b}
    \end{minipage}
    \begin{minipage}[t][12cm][b]{.32\textwidth}
                    \centering
                    \includegraphics[width=\textwidth]{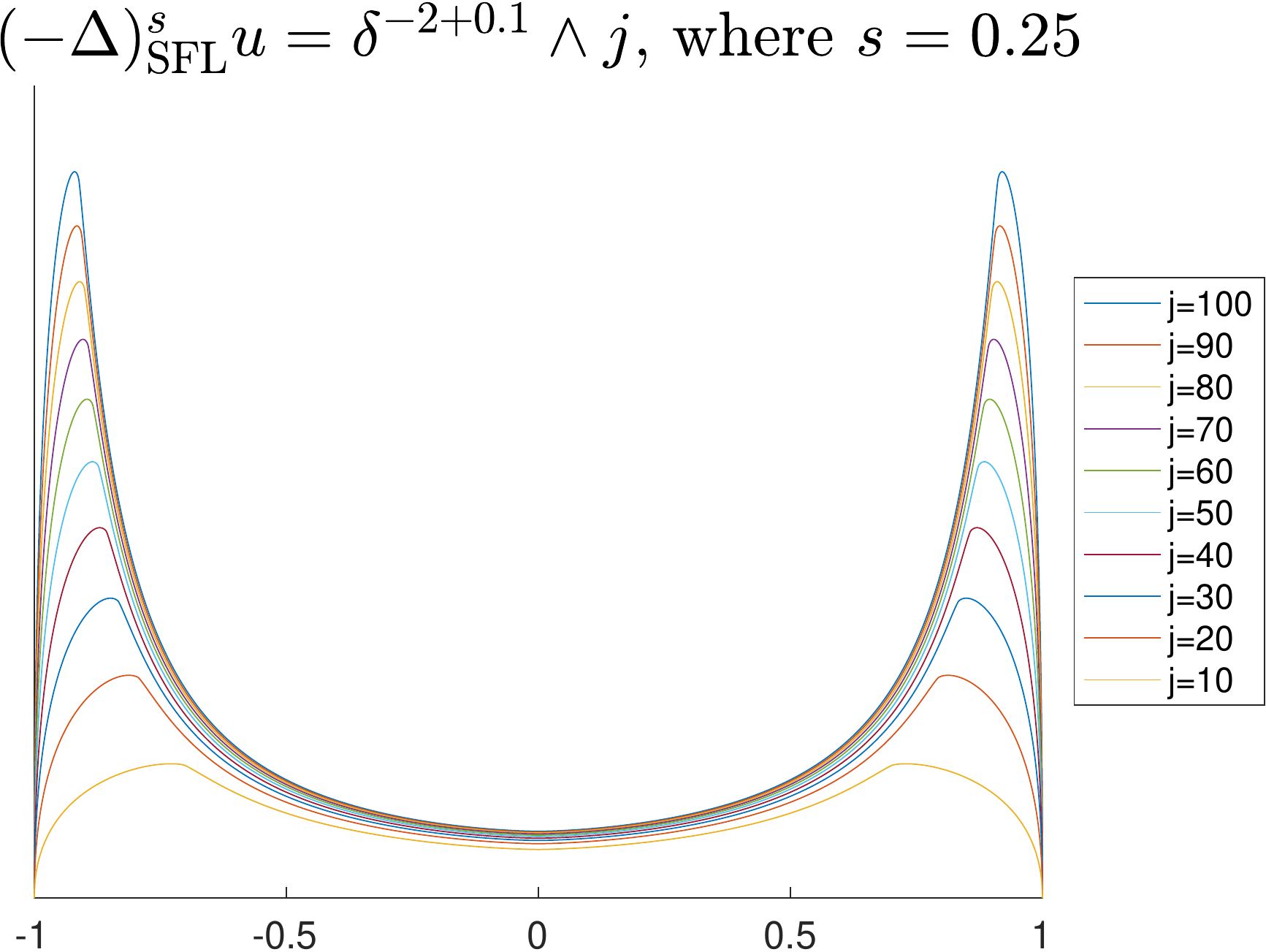}%
                    
                    \includegraphics[width=\textwidth]{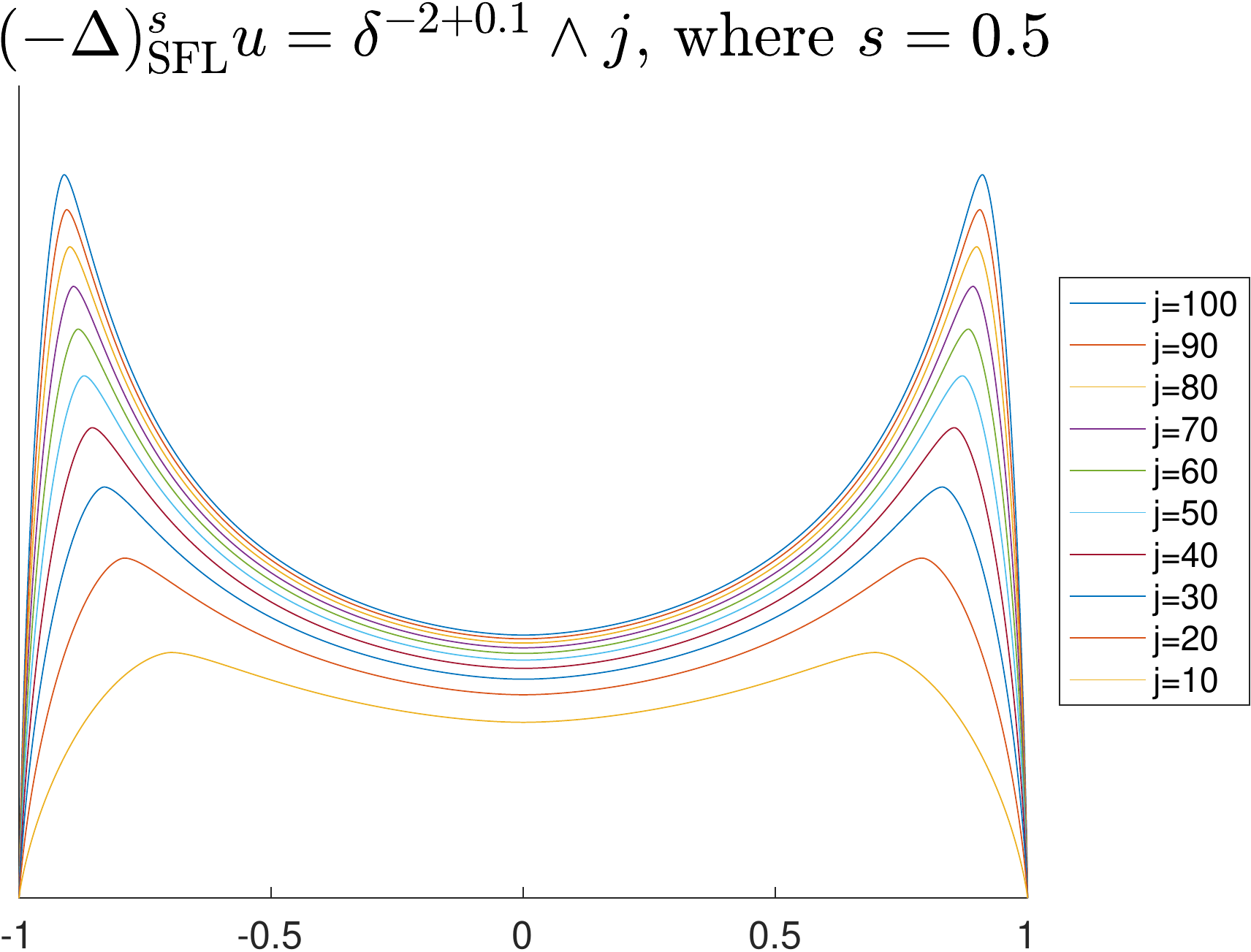}
                    
                    \includegraphics[width=\textwidth]{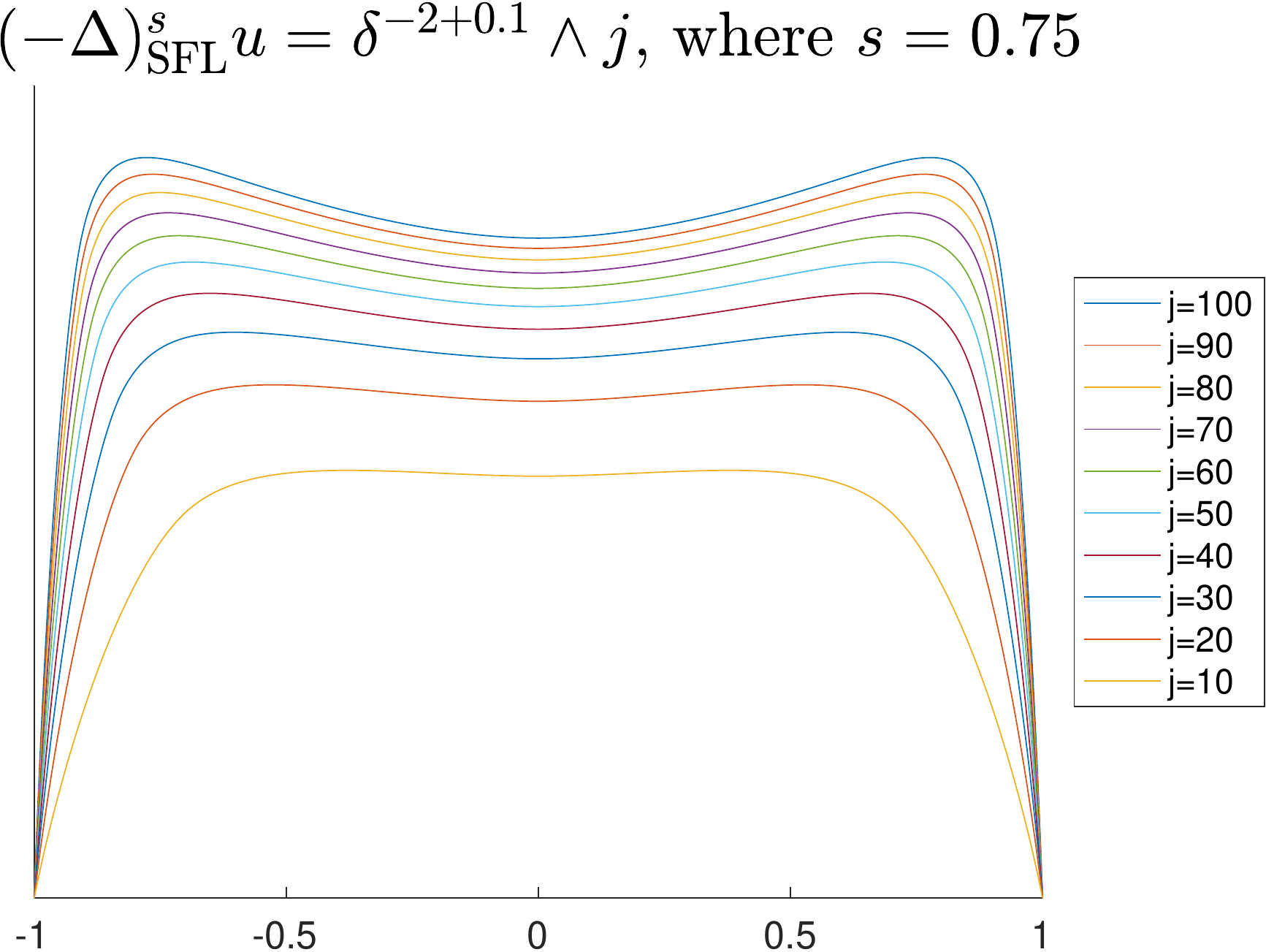}
                    
                    \includegraphics[width=\textwidth]{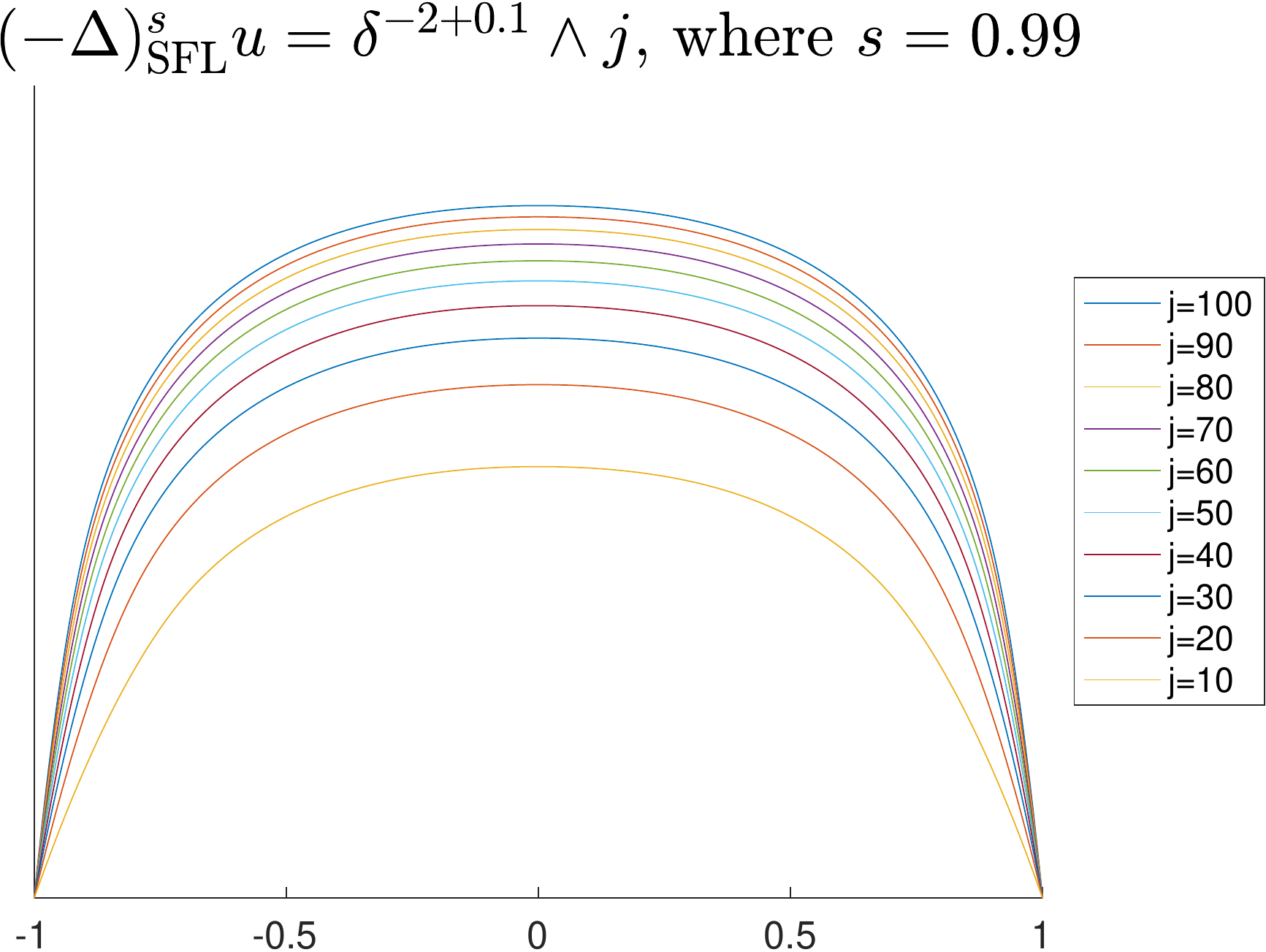}
                    
                    \subcaption{SFL}
                    
                    \label{fig:1b}
    \end{minipage}
    \begin{minipage}[t][12cm][b]{.32\textwidth}
                    \centering
                     
                    \includegraphics[width=\textwidth]{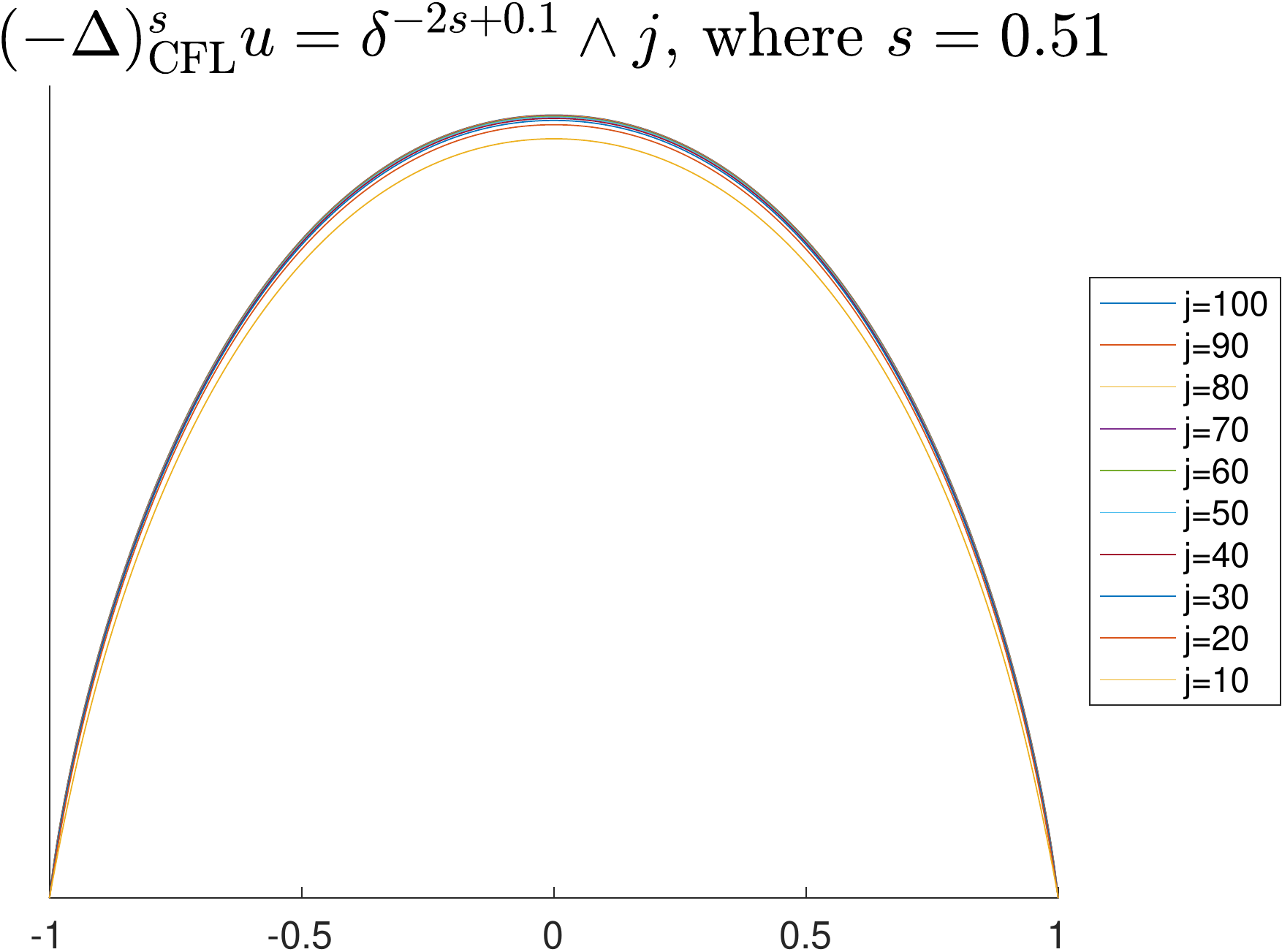}     
                                        
                    \includegraphics[width=\textwidth]{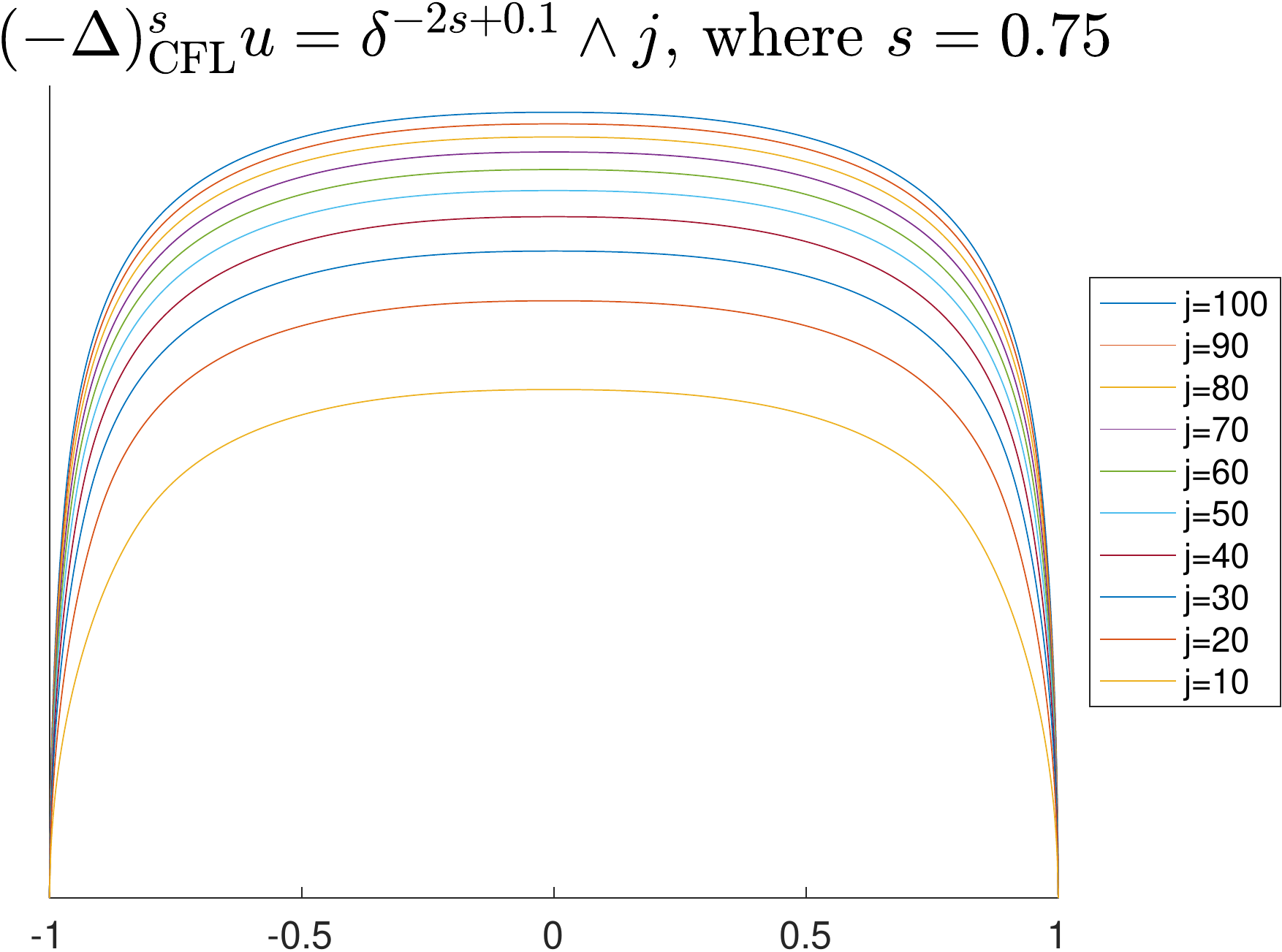}
                    
                    \includegraphics[width=\textwidth]{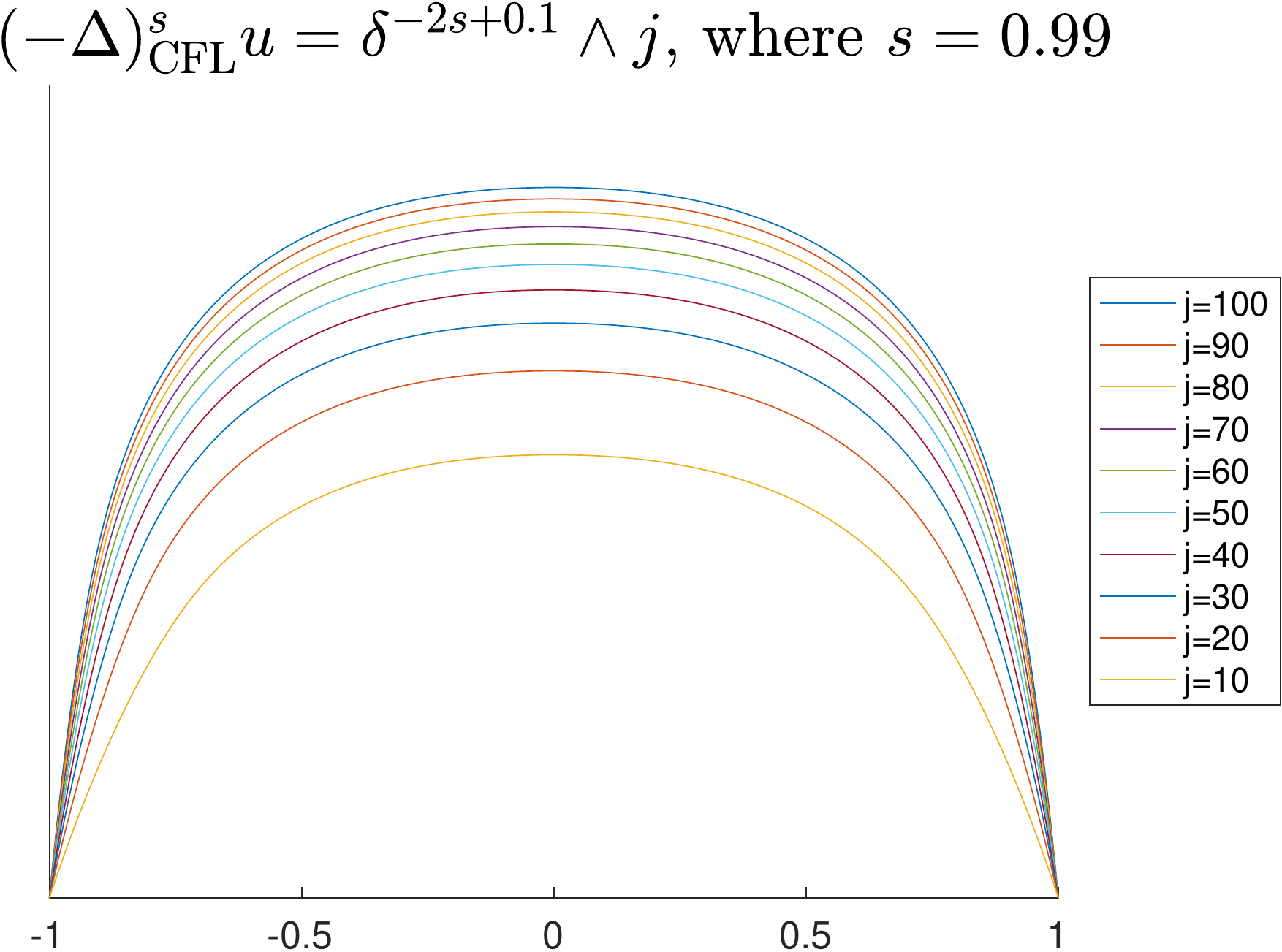}
                    
                    \subcaption{CFL}
                    
    \end{minipage}
	
    \caption{
    Numerical solutions of $ \Ls u = \delta^{-\gamma -1 + \ee } \wedge j $ in dimension $n=1$ for different operators. All computations correspond to Finite Difference numerical schemes. For the RFL we take the weights for the finite difference discretization of the fractional Laplacian in $\mathbb R^n$ in \cite{delTeso2018} (see also \cite{Ciaurri}). The discretization for smooth functions is rigorously shown to be $O(h^2)$. A previous approach by Finite Differences is given in \cite{Huang}. Experimental results in \cite{Huang} suggest that the restriction for RFL is of order $O(h^s)$. For the SFL we use the fractional power of matrix of finite differences. It is known that the eigenvalues of this matrix converge to those of the usual Laplacian, and hence its fractional power produces a convergent scheme for the SFL. A different scheme can be found in \cite{Cusimano+delTeso2018}. For the CFL we have used a novel approach, which we will describe in an upcoming paper.}
    \label{fig:subcritical}
\end{figure}

\subsection{Range of exponents}

Before stating and proving the main result of this paragraph,
we need to state a couple of technical estimates on which the
result is based. Since the proofs of these estimates is rather long and technical,
we defer them to Appendix~\ref{appendix:technicalities}.
The first one gives some interior estimates;
the second one is describing the sharp behaviour of solutions at the boundary.

\begin{remark}\label{rmk:tubular}
In what follows, we use without further notice~$\ee>0$ to denote the fixed width on which
the tubular neighbourhood theorem can be rightfully applied, \textit{i.e.}, the map
\begin{eqnarray*}
		\Phi : \partial \Omega \times (-\ee , \ee)& \longrightarrow &\mathbb R^n  \\
		(z,\delta) &\longmapsto & z + \delta \mathbf n (z)
\end{eqnarray*}
defines a diffeomorphism to its image. Here, $\mathbf n$  represents the unit interior normal.
This is well known for smooth manifolds (see~\cite{daSilva2008}), and holds also for $C^{1,1}$ open sets of $\mathbb R^n$.
The notation $\delta$ might seem like an abuse of notation, but it will lead to no confusion since, in this setting, $\mathrm{dist}(\Phi (z,\delta), \partial \Omega) = |\delta|$ for $\varepsilon$ sufficiently small.
\end{remark}

\begin{lemma}\label{lem:sharp interior}
Assume that~\eqref{eq:K0}--\eqref{eq:K3} hold.
Moreover, assume $\beta+\gamma>-1$ and let $\eta<\ee$ be fixed.
Then there exists a constant $\overline{c}(\eta)>0$ such that,
for any $x\in\{\delta>\eta/2\}$, it holds
\begin{align}\label{eq: sharp interior 1}
\Green(\delta^\beta\chi_{\{\delta<\eta\}})(x) \leq \overline{c}(\eta).
\end{align}
Moreover, if $\gamma<s-\frac12$ and then $x\in\{\delta>\eta/2\}$,
\begin{align}\label{eq: sharp interior 2}
\Green(\delta^\beta\chi_{\{\delta<\eta\}})(x) \leq \eta^{\beta+\gamma+1}\delta(x)^\gamma
\end{align}
up to constants not depending on $\eta$.
\end{lemma}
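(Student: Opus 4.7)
The proof rests on two universal pointwise bounds for $\G$ derived from~\eqref{eq:K2}:
\[
\text{(a) } \G(x,y) \leq C|x-y|^{2s-n}, \qquad \text{(b) } \G(x,y) \leq C\,\delta(x)^\gamma\,\delta(y)^\gamma\,|x-y|^{2s-n-2\gamma}.
\]
Bound (a) comes from $(\delta(x)\delta(y)/|x-y|^2\wedge 1)^\gamma \leq 1$. Bound (b) is immediate when $\delta(x)\delta(y)\leq|x-y|^2$, and in the opposite regime one has $\delta(x)^\gamma\delta(y)^\gamma|x-y|^{-2\gamma}\geq 1$, so (b) is actually weaker than (a) there; thus (b) is valid on all of $\Omega\times\Omega$. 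A third preliminary ingredient is the boundary integrability $\int_{\{\delta<\eta\}}\delta(y)^{\beta+\gamma}dy \asymp \eta^{\beta+\gamma+1}$, which holds thanks to the assumption $\beta+\gamma>-1$ and to Remark~\ref{rmk:tubular} (tubular parametrization with bounded Jacobian).

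\textbf{Plan for}~\eqref{eq: sharp interior 1}. I would split $\int_{\{\delta<\eta\}}\G(x,y)\delta(y)^\beta dy$ into the near region $\{|x-y|<\delta(x)/2\}$ and the far region $\{|x-y|\geq\delta(x)/2\}$. The near region is non-empty (intersected with $\{\delta<\eta\}$) only when $\delta(x)<2\eta$, in which case $\delta(y)\asymp\delta(x)\asymp\eta$, so bound (a) yields a contribution $\lesssim\eta^\beta\cdot\delta(x)^{2s}\lesssim\eta^{\beta+2s}$. In the far region $|x-y|\geq\eta/4$; using (b) with $\delta(x)/|x-y|\leq 2$ and the boundary integrability recalled above, the far contribution is dominated by $\delta(x)^{2s-n-\gamma}\cdot\eta^{\beta+\gamma+1}\lesssim\eta^{2s-n+\beta+1}$, since $\delta(x)>\eta/2$ and $2s-n-\gamma\leq 0$. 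The sum is an explicit $\overline c(\eta)$.

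\textbf{Plan for}~\eqref{eq: sharp interior 2}. Under the additional assumption $\gamma<s-\frac12$, I would apply (b) throughout so that
\[
\Green(\delta^\beta\chi_{\{\delta<\eta\}})(x)\leq C\,\delta(x)^\gamma\int_{\{\delta<\eta\}}\delta(y)^{\beta+\gamma}|x-y|^{2s-n-2\gamma}dy.
\]
Passing to tubular coordinates $y=\Phi(z,r)$ rewrites this as
\[
C\,\delta(x)^\gamma\int_0^\eta r^{\beta+\gamma}\Bigl(\int_{\partial\Omega}|x-\Phi(z,r)|^{2s-n-2\gamma}dS(z)\Bigr)dr.
\]
The crucial step is to show that the inner boundary integral is bounded by a constant $C_\Omega$ independent of $x$, $r$ and $\eta$. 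Letting $x^*$ be the projection of $x$ onto $\partial\Omega$ and writing $d=|\delta(x)-r|$, $\sigma=|z-x^*|$, the local asymptotics $|x-\Phi(z,r)|^2\asymp d^2+\sigma^2$ together with the dilation $\sigma=du$ turn the leading part of the boundary integral into $d^{2s-1-2\gamma}\,I(R/d)$ with $I(M)=\int_0^M u^{n-2}(1+u^2)^{(2s-n-2\gamma)/2}du$. The hypothesis $\gamma<s-\frac12$ is exactly $2s-2-2\gamma>-1$, so the integrand of $I$ is non-integrable at infinity and $I(M)\asymp M^{2s-1-2\gamma}$ as $M\to\infty$; the product $d^{2s-1-2\gamma}\cdot I(R/d)$ is therefore bounded by a multiple of $R^{2s-1-2\gamma}$, uniformly in $d$. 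The leftover part of $\partial\Omega$ (away from $x^*$) contributes a bounded term, so the inner integral is indeed uniformly bounded. Integrating in $r$ and using $\beta+\gamma>-1$ gives $C\,\delta(x)^\gamma\eta^{\beta+\gamma+1}/(\beta+\gamma+1)$, with constants independent of $\eta$ as required.

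The main obstacle is the uniform boundedness of the inner boundary integral: this step requires the $C^{1,1}$ structure of $\partial\Omega$ to justify the asymptotics $|x-\Phi(z,r)|^2\asymp d^2+\sigma^2$ on the full tubular neighbourhood, and it is the precise point where the threshold $\gamma=s-\frac12$ enters through the integrability exponent $2s-2-2\gamma$; once this is in hand, the two parts of the lemma follow by straightforward integration.
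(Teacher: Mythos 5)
Your proposal is correct and follows essentially the same route as the paper: both reduce the kernel to the two regimes of \eqref{eq:K2}, integrate over $\{\delta<\eta\}$ in tubular/coarea coordinates, and identify $\gamma<s-\tfrac12$ as exactly the condition making the surface integral $\int_{\{\delta=t\}}|x-y|^{2s-n-2\gamma}\,dy$ uniformly bounded, which yields \eqref{eq: sharp interior 2}. The only differences are cosmetic: for \eqref{eq: sharp interior 1} you split by $|x-y|\lessgtr\delta(x)/2$ rather than by $\delta(y)\lessgtr\eta/4$, and you spell out the dilation argument for the uniform bound on the boundary integral that the paper merely asserts.
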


\begin{lemma}\label{lem:sharp boundary}
Assume that~\eqref{eq:K0}--\eqref{eq:K3} hold.
Moreover, assume $\beta+\gamma>-1$ and let $\eta<\ee$ be fixed.
Then for any $x\in\{\delta<\eta/2\}$ it holds
\begin{align}\label{first boundary estimates with theta}
\Green(\delta^\beta\chi_{\{\delta<\eta\}})(x) \asymp
\delta(x)^{\beta+2s}+\eta^{\beta+\gamma+1}\delta(x)^\gamma+\Theta(\eta,x),
\end{align}
where the $\Theta$ is defined as follows:
\begin{enumerate}[\rm a)]
\item If $\gamma<s-\frac12$ then
\begin{align}\label{theta gamma small}
\Theta(\eta,x):=
\delta(x)^{\beta+2\gamma+1};
\end{align}
\item If $\gamma=s-\frac12$ then
\begin{align}\label{theta gamma equal}
\Theta(\eta,x):=
\delta(x)^{\beta+2s}\,|\!\ln\delta(x)|
+
\eta^{\beta+\gamma+1}\,|\!\ln\eta|\,\delta(x)^\gamma;
\end{align}
\item If $\gamma>s-\frac12$ then
\begin{align}\label{theta gamma large}
\Theta(\eta,x):=
\left\lbrace\begin{aligned}
& 0  									& & \text{if } \beta<\gamma-2s, \\
& \delta(x)^{\beta+2s}\,|\!\ln(\delta(x)/\eta)|  	& & \text{if } \beta=\gamma-2s, \\
& \eta^{\beta+2s-\gamma}\delta(x)^\gamma 	& & \text{if } \beta>\gamma-2s.
\end{aligned}\right.
\end{align}
\end{enumerate}
\end{lemma}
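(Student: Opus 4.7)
The plan is to use the tubular-neighbourhood parametrization from Remark~\ref{rmk:tubular} together with the two-sided kernel estimate~\eqref{eq:K2} to reduce $\Green(\delta^\beta\chi_{\{\delta<\eta\}})(x)$ to an explicit, computable double integral. Writing $x=\Phi(z_0,a)$ with $a=\delta(x)<\eta/2$ and $y=\Phi(z,t)$ with $t=\delta(y)<\eta$, the bi-Lipschitz character of $\Phi$ on its domain gives $|x-y|^2\asymp|z_0-z|^2+(a-t)^2$ and a bounded, nonvanishing Jacobian. Plugging into~\eqref{eq:K2} yields
\[
\Green(\delta^\beta\chi_{\{\delta<\eta\}})(x) \asymp \int_0^\eta t^\beta \int_{\partial\Omega} \frac{1}{\bigl(|z_0-z|^2+(a-t)^2\bigr)^{(n-2s)/2}}\left(\frac{at}{|z_0-z|^2+(a-t)^2}\wedge 1\right)^\gamma dz\,dt.
\]

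My first step is the tangential integration in $z$. Setting $r=|z_0-z|$ reduces this to a one-dimensional integral against $r^{n-2}\,dr$, and the minimum inside the kernel naturally cleaves the range of $r$ into a \emph{near} region $r^2+(a-t)^2\lesssim at$, where the kernel behaves as $(at)^\gamma|x-y|^{-(n-2s+2\gamma)}$, and a \emph{far} region, where it behaves as $|x-y|^{-(n-2s)}$. Both pieces admit elementary closed-form estimates as powers of $|a-t|$ (possibly with logarithms). Whether the near-region $r$-integral converges at the lower endpoint hinges on comparing $n-2s+2\gamma$ to $n-1$, i.e., $\gamma$ to $s-\tfrac12$: this is the first dichotomy in the statement.

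Next, I integrate in $t\in(0,\eta)$, splitting into three subregions: the \emph{diagonal} $t\in(a/2,2a)$, the \emph{short tail} $t\in(0,a/2)$, and the \emph{long tail} $t\in(2a,\eta)$. The diagonal contribution yields the purely local term $\delta(x)^{\beta+2s}$, exactly as in the pure Riesz kernel $|x-y|^{2s-n}$. The long tail falls entirely in the small-min regime ($at/|x-y|^2\ll 1$), where a direct power count produces the boundary-type contribution $\eta^{\beta+\gamma+1}\delta(x)^\gamma$. What remains is the short tail and the intermediate overlap with the near/far split, and this is precisely where the quantity $\Theta(\eta,x)$ arises: the exponent governing the $t$-integrand is $\beta+2\gamma+1$ in the near regime and $\beta+1$ in the far regime, and it is the sign of $\gamma-s+\tfrac12$, together with the sign of $\beta-(\gamma-2s)$, that decides whether this integral is controlled at the endpoint $t\sim a$, at $t\sim\eta$, or is borderline (giving rise to the logarithms in~\eqref{theta gamma equal} and in the middle case of~\eqref{theta gamma large}).

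The main technical obstacle I anticipate is bookkeeping: tracking which multiplicative constants may depend on $\eta$ and which must not, and checking that the lower bound coming from the lower side of~\eqref{eq:K2} matches the upper bound in each of the eight or so subcases. Because the kernel is bounded below by the same expression (up to a constant), the lower bound is obtained by restricting the integration to a single well-chosen subregion of the $(r,t)$-plane for each case, and the estimates follow by the same power-counting as above. Apart from this case split, every step is either Fubini, a change of variables, or a one-dimensional power integral, so no new ideas beyond~\eqref{eq:K2} and Remark~\ref{rmk:tubular} should be required.
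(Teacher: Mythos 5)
Your plan is essentially the paper's own argument in a different organization: the paper splits $\{\delta<\eta\}$ into five spatial regions (the ball $B(x,\delta(x)/2)$, the part outside $B(x,1)$, and three bands $\delta(y)\lessgtr\delta(x)$ inside $B(x,1)$), flattens each with the boundary diffeomorphism of Remark~\ref{rmk:tubular}, and power-counts; your global $(z,t)$-parametrization followed by Fubini and the diagonal/short-tail/long-tail split in $t$ is an equivalent reshuffling of the same computation, and the two dichotomies you isolate ($\gamma$ versus $s-\tfrac12$ from the tangential integral, $\beta$ versus $\gamma-2s$ from the $t$-integral) are exactly the ones driving Table~\ref{table:exponents}.

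One sentence of your write-up is backwards and would derail the computation if taken literally: in the region $r^2+(a-t)^2\lesssim at$, i.e.\ $|x-y|^2\lesssim\delta(x)\delta(y)$, the minimum in~\eqref{eq:K2} equals $1$ and the kernel is the pure Riesz kernel $|x-y|^{2s-n}$, whereas it is in the complementary region $|x-y|^2\gtrsim\delta(x)\delta(y)$ that it behaves like $(\delta(x)\delta(y))^\gamma|x-y|^{2s-n-2\gamma}$; you have stated the opposite. This is not cosmetic: for the SFL, say, $\gamma=1\geq s$ and the integral of $|x-y|^{2s-n-2\gamma}$ over the diagonal ball $B(x,\delta(x)/2)$ diverges, so your labelling cannot be carried through as written. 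Since your later sentences (the diagonal giving $\delta(x)^{\beta+2s}$ ``as for the pure Riesz kernel'', the long tail lying in the small-min regime) use the correct assignment, I read this as a slip rather than a conceptual error, but it must be fixed. A second, smaller imprecision: the long tail $t\in(2a,\eta)$ yields only $\eta^{\beta+\gamma+1}\delta(x)^\gamma$ when $\gamma<s-\tfrac12$; when $\gamma>s-\tfrac12$ the tangential integral there produces a factor $(t-a)^{2s-2\gamma-1}$ and the long tail is precisely the source of the $\Theta$-terms $\eta^{\beta+2s-\gamma}\delta(x)^\gamma$ and $\delta(x)^{\beta+2s}|\!\ln(\delta(x)/\eta)|$ (the paper's region $\Omega_4$), so the bookkeeping of which subregion produces which term needs the corresponding adjustment before the case-by-case matching of upper and lower bounds can close.
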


We are now ready to prove the following estimate.

\begin{theorem}\label{thm:range of exponents}
Assume that~\eqref{eq:K0}--\eqref{eq:K3} hold.
Moreover, assume $\beta + \gamma > -1$.
Then $\delta^{\beta} \in L^1 (\Omega, \delta^\gamma)$
and
\begin{equation*}
	\Green(\delta^\beta) \asymp \delta^{ \alpha }
\end{equation*}
with
\begin{table}[H]
\centering
\begin{tabular}{c|c|c|c}
$\alpha$ & $\gamma<s-\frac12$ & $\gamma=s-\frac12$ & $\gamma>s-\frac12$ \\ [.25em]
\hline & & & \\ [-.75em]
$\beta<\gamma-2s$ 	& $\bullet$ 		& $\bullet$	& $\beta+2s$ \\
 & & & \\ [-.75em]
$\beta=\gamma-2s$	& $\bullet$		& $\bullet$ 	& $\gamma$ {\footnotesize (and log. weight)} \\
 & & & \\ [-.75em]
$\beta>\gamma-2s$	& $\gamma$		& $\gamma$	& $\gamma$
\end{tabular}
	\caption{Range of exponents. We indicate by $\bullet$ the cases outside the admissible range $\beta + \gamma > -1$.}\label{table:exponents}
\end{table}
\noindent where by logarithmic weight we mean that $\Green(\delta^{\gamma - 2s}) \asymp \delta^\gamma \, \big(1+|\!\ln \delta |).$
\end{theorem}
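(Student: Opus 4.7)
The plan is to reduce everything to the sharp two-sided estimates of Lemmas \ref{lem:sharp interior} and \ref{lem:sharp boundary}, and then identify the dominant term in each cell of Table \ref{table:exponents}.

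The admissibility claim $\delta^\beta \in L^1(\Omega, \delta^\gamma) \iff \beta+\gamma > -1$ is a one-line tubular-neighbourhood computation (Remark \ref{rmk:tubular}): $\int_\Omega \delta^{\beta+\gamma}\,dx$ is finite exactly under that condition. For the asymptotic behaviour, I would fix $\eta \in (0,\varepsilon)$ and split
\begin{equation*}
\Green(\delta^\beta) = \Green\bigl(\delta^\beta\chi_{\{\delta<\eta\}}\bigr) + \Green\bigl(\delta^\beta\chi_{\{\delta\geq\eta\}}\bigr).
\end{equation*}
The second summand acts on a bounded, compactly supported function, so sandwiching it between constant multiples of $\chi_{\{\delta\geq\eta\}}$ and applying Corollary \ref{lem:estimate G of characteristic compact support} yields $\Green(\delta^\beta\chi_{\{\delta\geq\eta\}}) \asymp \delta^\gamma$ on all of $\Omega$. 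For the first summand, Lemma \ref{lem:sharp boundary} provides
\begin{equation*}
\Green\bigl(\delta^\beta\chi_{\{\delta<\eta\}}\bigr)(x) \asymp \delta(x)^{\beta+2s} + \eta^{\beta+\gamma+1}\delta(x)^\gamma + \Theta(\eta,x)
\end{equation*}
on $\{\delta<\eta/2\}$, while on the complementary region $\{\delta\geq\eta/2\}$ Lemma \ref{lem:sharp interior} combined with the lower Hopf bound (Theorem \ref{thm:lower Hopf}) forces $\Green(\delta^\beta) \asymp 1 \asymp \delta^\gamma$. The whole analysis therefore boils down to identifying the leading-order term in $\delta(x)\to 0$ within the thin strip $\{\delta<\eta/2\}$, for fixed $\eta$.

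A preliminary observation streamlines the case count: admissibility forces $\beta > \gamma - 2s$ whenever $\gamma \leq s-\tfrac12$, since in that regime $-1-\gamma \geq \gamma-2s$. Thus the empty cells of Table \ref{table:exponents} are exactly those where admissibility fails, and only a few configurations need be checked. When $\gamma > s-\tfrac12$ and $\beta<\gamma-2s$ one has $\Theta=0$ and $\beta+2s<\gamma$, so $\delta^{\beta+2s}$ beats $\delta^\gamma$ and $\Green(\delta^\beta)\asymp\delta^{\beta+2s}$. At the critical exponent $\beta=\gamma-2s$, the logarithmic piece $\Theta(\eta,x) = \delta(x)^\gamma|\ln(\delta(x)/\eta)|$ dominates, yielding $\Green(\delta^\beta) \asymp \delta^\gamma(1+|\!\ln\delta|)$. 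In every remaining admissible configuration a direct check using $\beta+\gamma>-1$ shows that $\beta+2s>\gamma$ and that every summand of $\Theta$ is $O(\delta^\gamma)$, so $\delta^\gamma$ is the winner.

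The main obstacle is not this case analysis, which is just bookkeeping in exponents, but rather the \emph{lower} bound built into Lemma \ref{lem:sharp boundary}: without that matching estimate one could only obtain $\Green(\delta^\beta) \leq C\delta^\alpha$, missing the crucial information that the singular profile is actually realised. Granted both technical lemmas, the theorem reduces to the arithmetic sketched above.
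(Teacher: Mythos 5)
Your proposal is correct and follows essentially the same route as the paper: the same splitting $\Green(\delta^\beta)=\Green(\delta^\beta\chi_{\{\delta<\eta\}})+\Green(\delta^\beta\chi_{\{\delta\geq\eta\}})$, the same appeal to Lemmas~\ref{lem:sharp interior} and~\ref{lem:sharp boundary} plus Corollary~\ref{lem:estimate G of characteristic compact support}, the same observation that the empty cells are exactly the inadmissible ones, and the same exponent comparison. Your explicit use of the lower Hopf bound on $\{\delta\geq\eta/2\}$ is a small (and welcome) tightening of a point the paper glosses over, but it does not change the argument.
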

\begin{proof}
Let us first notice that conditions $\beta\leq\gamma-2s$ and $\gamma\leq s-\frac12$
are not compatible: indeed, if they both held, then it would be
$\beta+\gamma\leq 2\gamma-2s\leq -1$ contradicting our standing assumption on~$\beta$.

	We pick some fixed $\eta < \ee$ and we write
	\begin{equation*}
		\Green( \delta^\beta) = \Green (\delta^\beta \chi_{ \{\delta \le \eta \} }) + \Green (\delta^\beta \chi_{ \{ \delta > \eta \} }   ).
	\end{equation*}
	Notice that
	\begin{align*}
		\Green (\delta^\beta \chi_{ \{ \delta > \eta \} }   )\asymp\delta^\gamma
	\end{align*}
	as $\delta^\beta \chi_{ \{ \delta > \eta \} }\in L^\infty_c(\Omega)$.
	For the other term, we exploit Lemmas~\ref{lem:sharp boundary}
	and~\eqref{eq: sharp interior 1} to say
	\begin{align*}
		\Green (\delta^\beta \chi_{ \{\delta \le \eta \} })\asymp
		\delta(x)^{\beta+2s}+\delta(x)^\gamma+\Theta(1,x)
	\end{align*}
	where $\Theta$ is defined as in Lemma~\ref{lem:sharp boundary}.
	Now, the asymptotic behaviour is driven by the least exponent on $\delta$,
	yielding the situation depicted in Table~\ref{table:exponents}.
\end{proof}

\begin{remark}
	Let us look at the ranges for $\alpha$ and $\beta$ as in Theorem~\ref{thm:range of exponents}, disregarding the logarithmic cases, to better understand the possible boundary behaviours of solutions to~\eqref{eq:FDE}.
	When $\gamma > s - \frac 1 2$ the admissible range for $\beta$ is $(-1-\gamma, +\infty)$; in this case $\alpha$ runs in $(2s-\gamma-1, \gamma]$: notice that $2s-\gamma-1$ might be negative, meaning that also $\alpha$ is allowed to be negative in some cases. This translates in particular into a rebuttal of $\Green(\delta^\beta)=0$ on $\partial\Omega$, despite the fact that this would be the solution to a  homogeneous boundary (or exterior problem) value problem. For exterior problem, this shows solutions are discontinuous on the boundary for some singular data (possibly outside $L^1(\Omega)$).
For boundary value problems this is a breakdown of the boundary condition.
However, this behaviour intrinsic to the problem, since
we are only constructing the closure of the solution operator $\Green$, to its maximal domain of definition.
	
	If instead $\gamma < s - \frac 1 2$ then again $\beta$ ranges in $(-\gamma-1,+\infty)$, but this time $\alpha$ is bound to be equal to $\gamma$, meaning that there is no range for $\alpha$.
\end{remark}

\begin{example}
Let us exemplify the statement of Theorem~\ref{thm:range of exponents}.
If we consider $\beta=0$, we deduce
\begin{align*}
\Green(\chi_\Omega)\ \asymp\ \left\lbrace\begin{aligned}
& \delta^\gamma & & \text{if }\gamma<2s \\
& \delta^{\gamma}\,\big(1+|\!\ln\delta|\big) & & \text{if }\gamma=2s \\
& \delta^{2s} & & \text{if }\gamma>2s
\end{aligned}\right.
\qquad\text{in }\Omega.
\end{align*}
Setting $\beta=\gamma$ gives
\begin{align*}
\Green(\delta^\gamma)\asymp\delta^\gamma,
\qquad \text{in }\Omega.
\end{align*}
Taking $\beta=\pm s$ returns respectively
\begin{align*}
\Green(\delta^s)\ \asymp\ \left\lbrace\begin{aligned}
& \delta^\gamma & & \text{if }\gamma<3s \\
& \delta^{\gamma}\,\big(1+|\!\ln\delta|\big) & & \text{if }\gamma=3s \\
& \delta^{3s} & & \text{if }\gamma>3s
\end{aligned}\right.
\quad\text{and}\quad
\Green(\delta^{-s})\ \asymp\ \left\lbrace\begin{aligned}
& \delta^\gamma & & \text{if }\gamma<s \\
& \delta^{\gamma}\,\big(1+|\!\ln\delta|\big) & & \text{if }\gamma=s \\
& \delta^{s} & & \text{if }\gamma>s.
\end{aligned}\right.
\end{align*}

The value $\beta=-2s$ is a somewhat critical value for the boundary behaviour
(if $\gamma>2s-1$, otherwise  $\Green$ is not defined),
since
\begin{align*}
\Green(\delta^{-2s})\ \asymp\ 1.
\end{align*}
Below this value, if $\beta$ is of the form $\beta=-2s-\varepsilon,\ \varepsilon\in(0,\gamma-2s+1) \setminus\{-\gamma\}$, one has
\begin{align*}
\Green(\delta^{-2s-\varepsilon})\ \asymp\ \delta^{-\varepsilon}.
\end{align*}
\end{example}

\normalcolor

\begin{remark}
Notice that, if $\beta \in (-1/2,-2s)$ we have that $\delta^\beta\in L^2(\Omega)$ and $\Green(\delta^\beta) \notin L^\infty (\Omega)$. This is possible if $s\in(0,1/4)$. Hence, this breakdown of the boundary conditions happens \emph{inside} the variational (energy) theory. This should not be surprising since, for $s < 1/2$, $H_0^s = H^s$ (the space has no trace). This points to an essential difference between the properties of the classical Laplacian and the fractional Laplacian for small values of $s$.
\end{remark}

\subsection{Subcritical boundary behaviour in average terms}
We have an extension of the result for the classical Laplacian on averaged convergence to the boundary, see~\cite{Ponce2016a}.
\begin{lemma}\label{lem:subritical traces}
Assume that~\eqref{eq:K0}--\eqref{eq:K3} hold
and let $f \in L^1 (\Omega, \delta^\gamma)$.
\begin{enumerate}[\rm a)]
	\item If $\gamma > s - 1/2$,
	\begin{equation*}
		\frac 1\eta\int_{ \{  \delta < \eta  \} } \frac{|\Green(f)|}{\delta^{2s-\gamma-1}} \longrightarrow 0 \qquad\text{as }\eta\downarrow 0.
	\end{equation*}
	\item If $\gamma = s - 1/2$,	
	\begin{equation*}
		\frac 1{\eta\,|\!\ln \eta|} \int_{ \{  \delta < \eta  \} } \frac{|\Green(f)|}{\delta^\gamma} \longrightarrow 0 \qquad\text{as }\eta\downarrow 0.
	\end{equation*}
	\item If $\gamma < s - 1/2$,
	\begin{equation}\label{subgamma}
		\frac 1\eta\int_{ \{  \delta < \eta  \} } \frac{|\Green(f)|}{\delta^\gamma} \le C.
	\end{equation}
\end{enumerate}
\end{lemma}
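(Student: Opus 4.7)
My plan is to reduce all three statements to a single uniform estimate on the dual Green potential $\Green(\delta^{-a}\chi_{\{\delta<\eta\}})$, where $a=2s-\gamma-1$ in case (a) and $a=\gamma$ in cases (b) and (c). The positivity and symmetry of $\G$ together with Fubini's theorem yield
\begin{align*}
\int_{\{\delta<\eta\}}\frac{|\Green(f)(x)|}{\delta(x)^a}\,dx\ \le\ \int_\Omega |f(y)|\,\Green\big(\delta^{-a}\chi_{\{\delta<\eta\}}\big)(y)\,dy,
\end{align*}
so the whole lemma follows once I prove a uniform bound of the form
\begin{align*}
\Green\big(\delta^{-a}\chi_{\{\delta<\eta\}}\big)(y)\ \lesssim\ w(\eta)\,\eta\,\delta(y)^\gamma, \qquad y\in\Omega,
\end{align*}
with $w(\eta)=1$ in cases (a) and (c), and $w(\eta)=|\ln\eta|$ in case (b). Setting $\beta:=-a$, one immediately checks $\beta+\gamma>-1$ and $\beta>\gamma-2s$ in each of the three cases, so Lemmas~\ref{lem:sharp interior} and~\ref{lem:sharp boundary} are at our disposal.

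\textbf{Sharp dual estimates.} I would split the argument according to whether $\delta(y)<\eta/2$ or $\delta(y)>\eta/2$. On $\{\delta<\eta/2\}$, the relevant sub-case of Lemma~\ref{lem:sharp boundary} decomposes $\Green(\delta^\beta\chi_{\{\delta<\eta\}})(y)$ as a sum of explicit powers of $\delta(y)$ and $\eta$; using $\delta(y)<\eta$ every summand reduces to a constant multiple of $w(\eta)\eta\delta(y)^\gamma$. For instance in case (a) one has $\delta(y)^{\gamma+1}+\eta^{2\gamma-2s+2}\delta(y)^\gamma+\eta\delta(y)^\gamma\lesssim \eta\,\delta(y)^\gamma$, the middle term being absorbed thanks to $2\gamma-2s+2>1$. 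On $\{\delta>\eta/2\}$, case (c) is handled directly by~\eqref{eq: sharp interior 2}, whereas in cases (a) and (b) I would bound $\G(x,y)\lesssim \delta(x)^\gamma\delta(y)^\gamma|x-y|^{2s-n-2\gamma}$, pass to tubular coordinates $x = z + \rho\mathbf n(z)$ (Remark~\ref{rmk:tubular}), and use the surface integral estimate
\begin{align*}
\int_{\partial\Omega}|z+\rho\mathbf n(z)-y|^{2s-n-2\gamma}\,d\sigma(z)\ \asymp\ |\rho-\delta(y)|^{2s-2\gamma-1},
\end{align*}
which is integrable in $\rho$ precisely when $\gamma>s-\tfrac12$; a comparison of powers using $\delta(y)>\eta/2$ then closes the same uniform bound.

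\textbf{Conclusion.} With the uniform bound in hand, part (c) is immediate, giving $\tfrac1\eta\int_{\{\delta<\eta\}}|\Green(f)|/\delta^\gamma\lesssim \|f\delta^\gamma\|_{L^1(\Omega)}$. For (a) and (b) I would combine the uniform bound (needed for domination) with pointwise convergence to zero: for each fixed interior $y$, one has $\G(\cdot,y)\le C(y)\delta(\cdot)^\gamma$ in a boundary neighbourhood, whence $\Green(\delta^{-a}\chi_{\{\delta<\eta\}})(y)\le C(y)\eta^{\beta+\gamma+1}$, and therefore $\tfrac{1}{\eta w(\eta)}\Green(\delta^{-a}\chi_{\{\delta<\eta\}})(y)\to 0$ (since $\beta+\gamma=2\gamma-2s+1>0$ in (a) and $\beta+\gamma=0$ in (b), the logarithmic factor $|\ln\eta|^{-1}$ forcing the limit). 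Dominated convergence with majorant $C|f|\delta^\gamma\in L^1(\Omega)$ then concludes.

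\textbf{Main obstacle.} The most delicate point is the uniform bound in cases (a) and (b) on the set $\{\delta>\eta/2\}$: Lemma~\ref{lem:sharp interior} only asserts existence of a constant $\overline c(\eta)$ without tracking its dependence on $\delta(y)$ or $\eta$, so the needed estimate must be recovered by the tubular-coordinate computation sketched above. It is exactly here that the threshold $\gamma>s-\tfrac12$ plays its role, through the convergence of the tangential integral; the borderline value $\gamma=s-\tfrac12$ is precisely what produces the logarithmic correction in case (b).
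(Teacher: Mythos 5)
Your overall strategy---dualising against $\Green(\delta^{-a}\chi_{\{\delta<\eta\}})$, using Lemma~\ref{lem:sharp boundary} on the strip $\{\delta<\eta/2\}$, and finishing with dominated convergence---is the same as the paper's, and your treatment of the strip and of part (c) is correct. The gap is in the claimed uniform bound on $\{\delta(y)>\eta/2\}$ in cases (a) and (b). The pointwise bound $\G(x,y)\lesssim\delta(x)^\gamma\delta(y)^\gamma|x-y|^{2s-n-2\gamma}$ is valid but far too lossy near the diagonal: for $y$ in the transition zone $\delta(y)\in(\eta/2,\eta)$ the domain of integration $\{\delta<\eta\}$ contains a full neighbourhood of $y$, and
\begin{equation*}
\int_{B(y,r)}|x-y|^{2s-n-2\gamma}\,dx=+\infty\quad\text{whenever }\gamma\ge s,
\end{equation*}
which is exactly the situation of the SFL ($\gamma=1$) and, with a logarithmic divergence, of the RFL ($\gamma=s$). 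Equivalently, in your tubular-coordinate computation the factor $|\rho-\delta(y)|^{2s-2\gamma-1}$ is not integrable across $\rho=\delta(y)$ when $2s-2\gamma-1\le-1$; the condition $\gamma>s-\tfrac12$ governs only the validity of the surface-integral estimate, not this radial integrability, so your sentence ``integrable in $\rho$ precisely when $\gamma>s-\tfrac12$'' conflates two different thresholds. As written, no uniform constant comes out for $\delta(y)$ comparable to $\eta$.

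The estimate you need is nevertheless true, and the paper obtains it on $\{\delta>\eta/2\}$ without any near-diagonal analysis: writing $\delta^{\beta}\chi_{\{\delta<\eta\}}\le\eta^{1+\sigma}\delta^{\beta-1-\sigma}\chi_{\{\delta<\eta\}}$ for a small $\sigma\in(0,2\gamma-2s+1)$ and invoking Theorem~\ref{thm:range of exponents} (note $\beta-1-\sigma<\gamma-2s$) gives $\Green(\delta^\beta\chi_{\{\delta<\eta\}})\lesssim\eta^{1+\sigma}\delta^{\gamma-\sigma}\le 2^\sigma\eta\,\delta^\gamma$ there. Alternatively you could repair your computation by splitting off the region $|x-y|^2\le\delta(x)\delta(y)$, where $\G(x,y)\asymp|x-y|^{2s-n}$ and the integral converges by $2s>0$, exactly as is done for the sets $\Omega_1$ and $\Omega_5$ in the proof of Lemma~\ref{lem:sharp boundary}. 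With either fix, the remainder of your argument (pointwise decay of $\Green(\delta^{-a}\chi_{\{\delta<\eta\}})(y)/(\eta\,w(\eta))$ for fixed interior $y$, plus domination by $C|f|\delta^\gamma$) goes through and recovers the statement.
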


\begin{proof}
	Assume that $f \ge 0$. Let us start from a).  It is clear that, by duality,
	\begin{equation*}
			\eta^{-1} \int_{ \{  \delta < \eta  \} } \frac{\Green(f)}{\delta^{2s-\gamma-1}} =\int_ \Omega f \delta^\gamma \; \frac{\Green (  \delta^{-2s+\gamma+1} \chi_{ \{ \delta < \eta \} }  )}{\eta \delta^\gamma}.
	\end{equation*}
	We decompose this last integral into two
	\begin{multline*}
	\int_ \Omega f \delta^\gamma \frac{\Green (  \delta^{-2s+\gamma+1} \chi_{ \{ \delta < \eta \} }  )}{\eta \delta^\gamma} = \int_ { \{ \delta \le \eta/2 \}  } f \delta^\gamma \frac{\Green (  \delta^{-2s+\gamma+1} \chi_{ \{ \delta < \eta \} }  )}{\eta \delta^\gamma}\ +\\
	+\ \int_ { \{ \delta > \eta/2 \} } f \delta^\gamma \; \frac{\Green (  \delta^{-2s+\gamma+1} \chi_{ \{ \delta < \eta \} }  )}{\eta \delta^\gamma}.
	\end{multline*}
Using~\eqref{first boundary estimates with theta} and~\eqref{theta gamma large},
in $\{\delta<\eta/2\}$ we get
\begin{align*}
\frac{\Green(\delta^{{-2s+\gamma+1}}\chi_{\{\delta<\eta\}})}{\eta\delta^\gamma}
\asymp\frac{\delta}\eta+\eta^{-2s+2\gamma+1}+1\leq 3
\end{align*}
and therefore
\begin{align*}
\int_ { \{ \delta \le \eta/2 \}  } f \delta^\gamma \frac{\Green (  \delta^{-2s+\gamma+1} \chi_{ \{ \delta < \eta \} }  )}{\eta \delta^\gamma} \longrightarrow 0
\qquad \text{as }\eta\downarrow0
\end{align*}
by dominated convergence.
On the other hand, in $\{\delta>\eta/2\}$ we have, for $\sigma\in(0,2\gamma-2s+1)$
\begin{align*}
\Green(\delta^{-2s+\gamma+1}\chi_{\{\delta<\eta\}})\leq\eta^{1+\sigma}\Green(\delta^{-2s+\gamma-\sigma}\chi_{\{\delta<\eta\}})\leq\eta^{1+\sigma}\Green(\delta^{-2s+\gamma-\sigma})\asymp\eta^{1+\sigma}\delta^{\gamma-\sigma}
\end{align*}
where we have used Theorem~\ref{thm:range of exponents}.
As a consequence
\begin{align*}
\int_ { \{ \delta > \eta/2 \}  } f \delta^\gamma \frac{\Green (  \delta^{-2s+\gamma+1} \chi_{ \{ \delta < \eta \} }  )}{\eta \delta^\gamma}
\leq \eta^\sigma \int_ { \{ \delta > \eta/2 \}  } f \delta^{\gamma-\sigma}
\longrightarrow 0
\end{align*}
again by dominated convergence.

The proof of b) is analogous by using~\eqref{theta gamma equal}.

Let us now consider c). As above, by duality,
\begin{multline*}
\eta^{-1} \int_{ \{  \delta < \eta  \} } \frac{\Green(f)}{\delta^\gamma}
=\int_\Omega f\delta^\gamma\;\frac{\Green(\delta^{-\gamma}\chi_{\{\delta<\eta\}})}{\eta\,\delta^\gamma}\ = \\
=\ \int_{\{\delta>\eta/2\}} f\delta^\gamma\;\frac{\Green(\delta^{-\gamma}\chi_{\{\delta<\eta\}})}{\eta\,\delta^\gamma}
+\int_{\{\delta<\eta/2\}} f\delta^\gamma\;\frac{\Green(\delta^{-\gamma}\chi_{\{\delta<\eta\}})}{\eta\,\delta^\gamma}.
\end{multline*}
For the first integral we use~\eqref{eq: sharp interior 2} with $\beta=-\gamma$ to deduce
\begin{align*}
\int_{\{\delta>\eta/2\}} f\delta^\gamma\;\frac{\Green(\delta^{-\gamma}\chi_{\{\delta<\eta\}})}{\eta\,\delta^\gamma}
\leq \int_{\{\delta>\eta/2\}} f\,\delta^\gamma
\end{align*}
up to constants not depending on $\eta$.
For the second one we use~\eqref{first boundary estimates with theta} and~\eqref{theta gamma small} which give
\begin{align*}
\frac{\Green(\delta^{-\gamma}\chi_{\{\delta<\eta\}})}{\eta\,\delta^\gamma}\asymp
\frac{\delta^{-2\gamma+2s}}{\eta}+1+\frac{\delta(x)}{\eta}\leq 3 ,
\qquad\text{in }\{\delta<\eta\},
\end{align*}
and therefore
\begin{align*}
\int_{\{\delta<\eta/2\}} f\delta^\gamma\;\frac{\Green(\delta^{-\gamma}\chi_{\{\delta<\eta\}})}{\eta\,\delta^\gamma}
\leq C\int_\Omega |f|\,\delta^\gamma.
\end{align*}
This completes the proof for $f \ge 0$. 

If $f$ changes sign, then can we apply the result have to $f_+$ and $f_-$. 
\end{proof}

\subsection{Sharp weighted spaces for the Green operator}
\label{sec:sharp functional spaces}
The computations above allow to complement the analysis carried out in Theorem~\ref{thm:functional},
and improve the estimate for the optimal data from $L^1_{loc}$ to a weighted space.
It follows the general philosophy that, due to~\eqref{eq:vwf}, for any $\mu \in \cM(\Omega, \delta^\gamma)$ we have
\begin{equation*}
	\Green: L^1 \big( \Omega, \Green( \mu ) \big) \to L^1 \big(\Omega, \mu \big).
\end{equation*}
The result is as follows.
\begin{theorem}\label{thm:precise weights}
Assume that~\eqref{eq:K0}--\eqref{eq:K3} hold and let $\alpha>(\gamma-2s)\vee(-\gamma-1)$. We have that
\begin{equation*}
	\Green : L^1 \big(\Omega, \delta^\gamma\big) \to L^1 \big(\Omega,\delta^\alpha\big)
\end{equation*}
is well-defined and continuous.
\end{theorem}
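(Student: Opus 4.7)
The plan is to reduce the claimed weighted $L^1$–continuity to the pointwise asymptotics of $\Green(\delta^\alpha)$ supplied by Theorem~\ref{thm:range of exponents}, via Tonelli and the symmetry~\eqref{eq:K1} of the kernel. Splitting $f=f_+-f_-$, I may assume $f\ge 0$; then by~\eqref{eq:K0},~\eqref{eq:K1}, and Tonelli's theorem,
\begin{equation*}
\int_\Omega \Green(f)(x)\,\delta(x)^\alpha\,dx \;=\; \int_\Omega f(y)\int_\Omega \G(x,y)\,\delta(x)^\alpha\,dx\,dy \;=\; \int_\Omega f(y)\,\Green(\delta^\alpha)(y)\,dy.
\end{equation*}
The conclusion will therefore follow from a pointwise upper bound of the form $\Green(\delta^\alpha)\le C\,\delta^\gamma$ in $\Omega$.

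To obtain such a bound I apply Theorem~\ref{thm:range of exponents} with $\beta=\alpha$. The hypothesis $\alpha>(\gamma-2s)\vee(-\gamma-1)$ is tailored precisely so that both of the necessary ingredients are in force: on one side $\alpha+\gamma>-1$, so that $\delta^\alpha$ is an admissible datum in the sense of~\eqref{eq:admissible class introduction} and Theorem~\ref{thm:range of exponents} is applicable; on the other side $\alpha+2s>\gamma$ strictly, which puts us in the bottom row of Table~\ref{table:exponents} and sidesteps the critical case $\beta=\gamma-2s$ where the logarithmic correction would appear. Note also that when $\gamma\le s-1/2$ one has $\gamma-2s\le-\gamma-1$, so the middle rows of the table are automatically excluded as inadmissible and the conclusion is uniform across the three regions of parameter space. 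Hence
\begin{equation*}
\Green(\delta^\alpha) \,\asymp\, \delta^\gamma \qquad \text{in } \Omega,
\end{equation*}
and substituting this into the Tonelli identity above yields $\|\Green(f)\|_{L^1(\Omega,\delta^\alpha)}\le C\,\|f\|_{L^1(\Omega,\delta^\gamma)}$, as required.

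I do not anticipate any substantial obstacle: all the hard work has already been done in Theorem~\ref{thm:range of exponents}. The only delicate point worth spelling out is the verification that the assumption on $\alpha$ excludes both endpoint pathologies, namely the failure of admissibility at $\alpha=-\gamma-1$ and the logarithmic blow-up at $\alpha=\gamma-2s$, so that $\Green(\delta^\alpha)$ really does obey the clean bound $\Green(\delta^\alpha)\le C\delta^\gamma$ without any extra $1+|\!\ln\delta|$ factor. If desired, the borderline $\alpha=\gamma-2s$ could be included at the cost of replacing the target by the logarithmically weighted space $L^1(\Omega,\delta^\gamma(1+|\!\ln\delta|))$.
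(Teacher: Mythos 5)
Your argument is correct and is essentially identical to the paper's proof: both reduce the claim to the duality identity $\int_\Omega |\Green(f)|\,\delta^\alpha \le \int_\Omega |f|\,\Green(\delta^\alpha)$ and then invoke Theorem~\ref{thm:range of exponents}, with the hypothesis on $\alpha$ guaranteeing both admissibility ($\alpha+\gamma>-1$) and membership in the bottom row of Table~\ref{table:exponents} ($\alpha>\gamma-2s$), so that $\Green(\delta^\alpha)\asymp\delta^\gamma$. Your extra remarks on the excluded borderline cases are accurate but not needed.
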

\begin{proof}
Take $f\in L^1 (\Omega, \delta^\gamma)$. Then
\begin{align*}
\int_\Omega \big|\Green(f)\big|\delta^\alpha\leq
\int_\Omega |f|\,\Green\big(\delta^\alpha\big).
\end{align*}
As $\alpha>-\gamma-1$ by assumption, we can apply Theorem~\ref{thm:range of exponents}.
Since $\alpha>\gamma-2s$, then
$
\Green\big(\delta^\alpha \big)\asymp\delta^\gamma
$
and, therefore,
\begin{align*}
\int_\Omega \big|\Green(f)\big| \, \delta^\alpha \leq
C\int_\Omega|f| \, \delta^\gamma.
\end{align*}
This completes the proof.
\end{proof}

\begin{corollary}
Under the assumptions of Theorem~\ref{thm:precise weights}, if $\gamma<2s$, then solutions for any admissible data are in $L^1 (\Omega)$.
\end{corollary}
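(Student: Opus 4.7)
The plan is to simply invoke Theorem~\ref{thm:precise weights} with $\alpha=0$, and verify that the hypothesis on $\alpha$ is satisfied precisely when $\gamma<2s$.

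More concretely, I would first observe that the target space $L^1(\Omega)$ coincides with the weighted space $L^1(\Omega,\delta^\alpha)$ for $\alpha=0$. Hence the statement reduces to checking that Theorem~\ref{thm:precise weights} is applicable with this choice of exponent. The admissibility condition in that theorem is
\begin{equation*}
\alpha>(\gamma-2s)\vee(-\gamma-1).
\end{equation*}
Substituting $\alpha=0$, we need $0>\gamma-2s$ and $0>-\gamma-1$. The first inequality is exactly the standing assumption $\gamma<2s$ of the corollary; the second holds automatically because $\gamma\in(0,1]$ by~\eqref{eq:K3}, so $-\gamma-1<0$.

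Therefore Theorem~\ref{thm:precise weights} yields the continuous map
\begin{equation*}
\Green:L^1(\Omega,\delta^\gamma)\longrightarrow L^1(\Omega,\delta^0)=L^1(\Omega),
\end{equation*}
which, together with Theorem~\ref{thm:L 1 to L 1 loc estimate} identifying $u=\Green(f)$ as the unique weak-dual solution, gives the claim for every admissible datum $f\in L^1(\Omega,\delta^\gamma)$. There is no real obstacle here: the corollary is essentially a verification that the boundary case $\alpha=0$ falls inside the admissible range carved out in Theorem~\ref{thm:precise weights}, and the estimate $\|\Green(f)\|_{L^1(\Omega)}\le C\|f\delta^\gamma\|_{L^1(\Omega)}$ is inherited from that theorem.
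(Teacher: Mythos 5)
Your argument is correct and coincides with the paper's own proof: both simply observe that $\alpha=0$ satisfies the admissibility condition $\alpha>(\gamma-2s)\vee(-\gamma-1)$ of Theorem~\ref{thm:precise weights} exactly when $\gamma<2s$ (the second constraint being automatic from~\eqref{eq:K3}). Your version just spells out the verification in more detail.
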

\begin{proof}
In the notations of Theorem~\ref{thm:precise weights}, notice that, if $\gamma<2s$, then $\alpha=0$ is an admissible choice.
\end{proof}

\label{sec:sharp boundary behaviour good data}

For $f \in L^\infty_c (\Omega)$ we have shown that $\Green(f) \asymp \delta^\gamma$. In order to study the sharp boundary behaviour, we want to study $\Green(f) / \delta^\gamma$. For this reason we introduce the following definition.
\begin{definition}\label{def:gamma normal derivative}
	We denote by
	\begin{equation*}
		D_\gamma u (z) := \lim_{\substack{x \to z \\ x \in \Omega}} \frac{u(x)}{\delta(x)^\gamma}
		\qquad z\in\partial\Omega,
	\end{equation*}
	and we call it normal $\gamma$-normal derivative of $u$.
\end{definition}
In order to prove sharp boundary behaviour we assume that the Green kernel has a $\gamma$-normal derivative $\Gnu$.
\begin{align}
	&\textrm{There exists $\Gnu:\partial \Omega \times \Omega \to \mathbb R$, such that, for every sequence } \Omega\ni x_j\to z \in \partial \Omega  \nonumber \\
	\label{eq:defn G nu}
	\tag{K5}
	& \textrm{we have } \lim_j \frac{ \G(x_j, \cdot) }{\delta (x_j)^\gamma} = \Gnu (z,\cdot)  \textrm{ a.e. in } \Omega .
\end{align}
\begin{remark}
	Notice that $\Gnu (z,y) = D_\gamma ( \G (\cdot, y) ) (z)$.	
\end{remark}

\begin{remark}
As a consequence of~\eqref{eq:K2} we have, for a.e. $y\in\Omega$ and $z\in\partial\Omega$,
\begin{align}
	\label{eq:bounds for Gnu}
	\Gnu(z,y)&\asymp\lim_{\substack{x \to z \\ x \in \Omega }}\frac1{{|x-y|}^{n-2s}}\left(\frac{\delta(y)}{{|x-y|}^2}\wedge\frac1{\delta(x)}\right)^\gamma
	=\frac{{\delta(y)}^\gamma}{{|z-y|}^{n-2s+2\gamma}} .
\end{align}
\end{remark}

\begin{remark}
Assumption~\eqref{eq:defn G nu} is satisfied in our three reference examples:
\begin{itemize}
	\item For the RFL, it follows from the Boundary Harnack Principle~\cite{bogdan97bhp} and the boundary regularity of solutions on smooth domains~\cite{Ros-Oton2014}; for $y\in\Omega,\psi\in C^\infty_c(\Omega)$ fixed and $\gamma=s$, we have
	\begin{align*}
		\frac{\G(x,y)}{\delta(x)^s}=\frac{\G(x,y)}{\Green(\psi)(x)}\frac{\Green(\psi)(x)}{\delta(x)^s},
		\qquad x\in\Omega.
	\end{align*}
	Both factors lie in $C^\alpha(\overline\Omega\setminus B_r(y))$, at least for $\alpha,r>0$ small enough.
	Indeed, the first one is due to~\cite[Theorem 1]{bogdan97bhp} and is a consequence of the $s$-harmonicity of the two involved functions close to the boundary; the second factor, instead, is more related to the smoothness of the boundary and a more classical Schauder regularity, see~\cite[Theorem 1.2]{Ros-Oton2014}. The kernel $\Gnu$ has been first introduced in~\cite{Abatangelo2015}, although it is strongly related to the Martin kernel, see for example~\cite{bogdan99representation}.
	\item For the SFL, the well-definition of $\Gnu$ is contained in~\cite[Lemma 14]{Abatangelo2017a}; in this case, the proof relies on a computation on an explicit representation formula for $\G$ in terms of the classical Dirichlet heat kernel.
	\item For the CFL, a Boundary Harnack Inequality is available (see~\cite[Section 6]{bogdan03censored}), so we can repeat the argument for the RFL. To this regard see, in particular,~\cite[Remark 6.1 and equation (6.35)]{bogdan03censored}.
\end{itemize}
\end{remark}

\begin{theorem}\label{thm:gamma normal derivative}
	Assume
		\eqref{eq:K0}--\eqref{eq:K3} and~\eqref{eq:defn G nu}
	and let $f \in L^1_c (\Omega)$. For $u = \Green(f)$, $D_\gamma u$ is well-defined on $\partial\Omega$. 		
	Furthermore
	\begin{equation}\label{represesentation dgamma u}
		D_\gamma u (z) = \int_{ \Omega } \Gnu (z,y) \, f(y) \; dy
	\end{equation}
and
\begin{equation*}
	|D_\gamma u (z)| \le C \| f \|_{L^1(\Omega)} \, \mathrm{dist}\big(z,\supp(f)\big)^{2s-2\gamma-n}.
\end{equation*}
\end{theorem}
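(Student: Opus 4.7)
The strategy is to realize $u(x)/\delta(x)^\gamma$ as an integral in $y$, justify the passage to the limit as $x \to z \in \partial\Omega$ by dominated convergence using assumption~\eqref{eq:defn G nu} together with a uniform bound coming from~\eqref{eq:K2} and the compactness of $\supp(f)$, and finally insert the upper bound on $\Gnu$ derived in~\eqref{eq:bounds for Gnu} to obtain the quantitative decay in $\mathrm{dist}(z,\supp(f))$.

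\textbf{Step 1 (setup and uniform domination).} Set $K := \supp(f) \Subset \Omega$ and let $z \in \partial\Omega$. For $x \in \Omega$ write
\begin{equation*}
\frac{u(x)}{\delta(x)^\gamma} = \int_{K} \frac{\G(x,y)}{\delta(x)^\gamma} \, f(y) \; dy.
\end{equation*}
Take $x$ so close to $\partial\Omega$ that $\delta(x) < \tfrac12 \, \mathrm{dist}(K,\partial\Omega)$; then for every $y\in K$,
\begin{equation*}
|x-y| \ \ge\ \mathrm{dist}(K,\partial\Omega) - \delta(x) \ \ge\ \tfrac12 \mathrm{dist}(K,\partial\Omega),
\end{equation*}
so $\delta(x)\delta(y)/|x-y|^2 \to 0$ as $x\to z$, and~\eqref{eq:K2} yields
\begin{equation*}
\frac{\G(x,y)}{\delta(x)^\gamma} \ \le\ C \, \frac{\delta(y)^\gamma}{|x-y|^{n-2s+2\gamma}} \ \le\ C'(K)
\end{equation*}
uniformly in such $x$ and in $y \in K$. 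Hence $\big|\tfrac{\G(x,y)}{\delta(x)^\gamma} f(y)\big| \le C'(K) |f(y)|$, which is integrable on $K$ since $f \in L^1_c(\Omega)$.

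\textbf{Step 2 (limit).} Assumption~\eqref{eq:defn G nu} gives $\G(x_j,y)/\delta(x_j)^\gamma \to \Gnu(z,y)$ for a.e. $y \in \Omega$ along any sequence $x_j \to z$. Dominated convergence applied with the bound from Step 1 yields
\begin{equation*}
\lim_{\substack{x \to z \\ x \in \Omega}} \frac{u(x)}{\delta(x)^\gamma} \ =\ \int_{\Omega} \Gnu(z,y) \, f(y) \; dy,
\end{equation*}
so $D_\gamma u(z)$ exists and satisfies~\eqref{represesentation dgamma u}. The limit is independent of the sequence, so $D_\gamma u$ is unambiguously defined on $\partial\Omega$.

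\textbf{Step 3 (decay estimate).} By~\eqref{eq:bounds for Gnu} one has $|\Gnu(z,y)| \le C\, \delta(y)^\gamma / |z-y|^{n-2s+2\gamma}$. Since $n-2s+2\gamma > 0$ by~\eqref{eq:K3} and $|z-y| \ge \mathrm{dist}(z,\supp f)$ for $y \in K$, and since $\delta$ is bounded on $\Omega$,
\begin{equation*}
|D_\gamma u(z)| \ \le\ C \int_K \frac{\delta(y)^\gamma}{|z-y|^{n-2s+2\gamma}} |f(y)| \; dy \ \le\ C \, \mathrm{dist}\bigl(z,\supp(f)\bigr)^{2s-2\gamma-n} \|f\|_{L^1(\Omega)}.
\end{equation*}

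\textbf{Main obstacle.} The only delicate point is the uniform domination in Step 1: one must argue that, although $\delta(x)^{-\gamma}$ blows up as $x\to z$, the kernel $\G(x,y)$ itself vanishes at the correct rate $\delta(x)^\gamma$ precisely because $y$ stays away from $\partial\Omega$ by a positive distance; this is what the ``$\wedge 1$'' clause in~\eqref{eq:K2} encodes and is what makes the dominating function integrable. Once this observation is in place, the rest is bookkeeping.
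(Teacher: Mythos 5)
Your proof is correct and follows essentially the same route as the paper's: write $u(x)/\delta(x)^\gamma$ as an integral, dominate the integrand uniformly via~\eqref{eq:K2} using that $\supp(f)$ stays a positive distance from $\partial\Omega$, pass to the limit by dominated convergence with~\eqref{eq:defn G nu}, and deduce the decay estimate from~\eqref{eq:bounds for Gnu}. Your Step 3 merely makes explicit a computation the paper leaves as "a consequence of~\eqref{eq:bounds for Gnu}".
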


\begin{proof}
	We write
	\begin{equation*}
		\frac{u(x)}{{\delta(x)}^\gamma} = \int_\Omega \frac{\G(x,y)}{{\delta(x)}^\gamma} \, f(y) \; dy.
	\end{equation*}
	Let $z\in\partial\Omega$, ${(x_j)}_{j\in\N}\subset\Omega$
	such that $x_j\to z$ as $j\uparrow\infty$, and $K = \supp(f)\Subset \Omega$.
	Then, up to constants, for $j$ sufficiently large
	\begin{equation*}
		\frac { \G(x_j,y) } {{\delta(x_j)}^\gamma} \le \textrm{dist}(x_j, K)^{2s-n-2\gamma} \delta(y)^{\gamma} \le \mathrm{dist}(K, \partial \Omega )^{2s-n-2\gamma}
		\qquad y\in K.
	\end{equation*}
	Therefore, since convergence a.e. in $y$ is given by~\eqref{eq:defn G nu}, by dominated convergence
	\begin{equation*}
		\frac{\G (x_j,\cdot)}{\delta(x_j)^\gamma} \, f\to \Gnu(z,\cdot) f
		\qquad \textrm{ in } L^1 (\Omega), \text{ as }j\uparrow\infty.
	\end{equation*}
	Thus
	\begin{equation*}
		\frac{u(x_j)}{{\delta(x_j)}^\gamma} = \int_\Omega \frac{\G(x_j,y)}{{\delta(x_j)}^\gamma} f(y) \; dy \to \int_\Omega \Gnu (z,y) f(y) \; dy
		\qquad \text{as }j\uparrow\infty.	
		\end{equation*}
	The limit is, by definition, $D_\gamma u (z)$.
	The pointwise estimate is a consequence of~\eqref{eq:bounds for Gnu}.
\end{proof}

\begin{remark}
	One needs to be careful with pathological cases that do not satisfy~\eqref{eq:defn G nu}, including
	\begin{equation*}
		\G (x,y) = \left( 2 + \sin \frac{1}{ \delta (x) } \right) \left( 2 + \sin \frac{1}{ \delta (y) } \right) \frac{1}{|x-y|^{n-2s} } \left( \frac{ \delta(x) \delta(y) }{|x-y|^2} \wedge 1 \right)^\gamma.
	\end{equation*}
\end{remark}

\begin{remark}
	Due to the lower Hopf estimates in Theorem~\ref{thm:lower Hopf}, if $f \ge 0$ we have
	\begin{equation*}
		D_\gamma u (z) \ge c \int_{ \Omega } f (y) \, \delta(y)^\gamma \; dy,
		\qquad z\in\partial\Omega.
	\end{equation*}
\end{remark}

\begin{proposition}
	\label{prop:convergence RHS of Martin wdf}
	Assume
		\eqref{eq:K0}--\eqref{eq:K3} and~\eqref{eq:defn G nu}.
	Let $j\in\N$, $A_j = \{ 1/j < \delta < 2/j \}$,~$u = \Green(f)$ for some~$f \in L^\infty_c (\Omega)$,
	and~$h$ be continuous on a neighbourhood of~$\partial \Omega$. Then
	\begin{equation*}
		\frac{1}{|A_j|} \int_{A_j} h(x) \, \frac{u(x)}{\delta(x)^\gamma}  \; dx
		\longrightarrow \frac{1}{|\partial \Omega|} \int_{\partial \Omega} h(z) \, D_\gamma u (z) \; dz
	\end{equation*}
\end{proposition}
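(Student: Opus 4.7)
The plan is to parametrize the strip $A_j$ via the tubular neighbourhood diffeomorphism and reduce the statement to a dominated-convergence argument. For $j$ sufficiently large that $2/j<\varepsilon$, the map $\Phi(z,\delta)=z+\delta\,\mathbf{n}(z)$ of Remark~\ref{rmk:tubular} is a $C^{1,1}$-diffeomorphism from $\partial\Omega\times(1/j,2/j)$ onto $A_j$. Let $J(z,\delta)$ denote its Jacobian; then $J$ is continuous on $\partial\Omega\times[0,\varepsilon]$ with $J(z,0)\equiv 1$, since the differential of $\Phi$ at $\delta=0$ is the identity in the frame formed by an orthonormal tangent basis to $\partial\Omega$ and $\mathbf{n}(z)$. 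The change of variables then converts
\begin{equation*}
\int_{A_j} h(x)\,\frac{u(x)}{\delta(x)^\gamma}\,dx=\int_{\partial\Omega}\int_{1/j}^{2/j} h(\Phi(z,\delta))\,\frac{u(\Phi(z,\delta))}{\delta^\gamma}\,J(z,\delta)\,d\delta\,dz,
\end{equation*}
and analogously $|A_j|=\int_{\partial\Omega}\int_{1/j}^{2/j}J(z,\delta)\,d\delta\,dz$.

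Rescaling $\tau=j\delta\in(1,2)$ factors out a common $1/j$ from numerator and denominator, turning the quotient into the ratio of $\int_{\partial\Omega}\int_1^2 h(\Phi(z,\tau/j))(\tau/j)^{-\gamma}u(\Phi(z,\tau/j))\,J(z,\tau/j)\,d\tau\,dz$ and $\int_{\partial\Omega}\int_1^2 J(z,\tau/j)\,d\tau\,dz$. Three ingredients will drive the passage to the limit in each integral: (i) continuity of $h$ and of $J$ yield $h(\Phi(z,\tau/j))\to h(z)$ and $J(z,\tau/j)\to 1$ uniformly in $(z,\tau)\in\partial\Omega\times[1,2]$; (ii) Theorem~\ref{thm:gamma normal derivative} applied along the sequence $x_j=\Phi(z,\tau/j)\to z$ delivers the pointwise limit $u(\Phi(z,\tau/j))/(\tau/j)^\gamma\to D_\gamma u(z)$ for each fixed $z\in\partial\Omega$; (iii) since $f\in L^\infty_c(\Omega)$, \eqref{compact infty-almost derivative} ensures $u/\delta^\gamma\in L^\infty(\Omega)$, producing a uniform integrable majorant on $\partial\Omega\times[1,2]$. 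Dominated convergence then gives the numerator $\to\int_{\partial\Omega}h(z)\,D_\gamma u(z)\,dz$ and the denominator $\to|\partial\Omega|$, and the conclusion follows upon division.

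The step I expect to require the most care is the uniformity in $\tau\in[1,2]$ of the convergence in (ii). Theorem~\ref{thm:gamma normal derivative} is phrased sequentially but, by the standard $\varepsilon$–$\delta_0$ characterization, is equivalent to the one-sided function limit $\lim_{\delta\downarrow 0} u(\Phi(z,\delta))/\delta^\gamma = D_\gamma u(z)$ for each fixed $z$; hence as soon as $2/j<\delta_0(z)$ the quantity $|u(\Phi(z,\tau/j))/(\tau/j)^\gamma-D_\gamma u(z)|$ is uniformly small in $\tau\in[1,2]$. This pointwise-in-$z$ convergence, combined with the $L^\infty$ bound from (iii), permits a final application of dominated convergence in $z$ to close the argument.
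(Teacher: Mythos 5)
Your proof is correct and follows essentially the same route as the paper: rewrite the average over $A_j$ in tubular (boundary-normal) coordinates, use the function-limit form of Theorem~\ref{thm:gamma normal derivative} for the pointwise convergence of $u/\delta^\gamma$, and conclude by dominated convergence via the $L^\infty$ bound from~\eqref{compact infty-almost derivative}. You are in fact slightly more careful than the paper's own proof, which suppresses the Jacobian $J(z,\delta)$ and the fact that $j|A_j|\to|\partial\Omega|$; your explicit treatment of these is a welcome refinement rather than a deviation.
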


\begin{proof}
	We write
	\begin{align*}
		\frac{1}{|A_j|} \int_{A_j} h(x) \frac{u(x)}{\delta(x)^\gamma} \; dx
		= \frac{j}{|A_j|} \int_{\partial \Omega} \frac{1}{j} \int_{1/j}^{2/j} h(z + \rho \mathbf n (z))\frac{u(z + \rho \mathbf n (z))}{\rho^\gamma} \; d\rho \; dz.
	\end{align*}
	Therefore, for every~$z \in \partial \Omega$,
	\begin{equation*}
		I_j (z) = \frac{1}j\int_{1/j}^{2/j} h(z + \rho \mathbf n (z)) \frac{u(z + \rho \mathbf n (z))}{\rho^\gamma}d \rho \ \longrightarrow\ h(z) D_\gamma u(z),
		\qquad \text{as }j\uparrow\infty.
	\end{equation*}
	Since~$I_j$ is a bounded function in~$\partial \Omega$ and pointwise convergent, by the dominated convergence theorem as~$j\uparrow\infty$
	\begin{equation*}
		\frac{j}{|A_j|} \int_{\partial \Omega} \frac{1}{j} \int_{1/j}^{2/j} h(z + \rho \mathbf n (z)) \frac{u(z + \rho \mathbf n (z))}{\rho^\gamma} \;d\rho \; dz \ \longrightarrow\  \frac{1}{|\partial \Omega|} \int_{\partial \Omega} h(z) D_\gamma u(z) \; dz.
	\end{equation*}
	This completes the proof.
\end{proof}

 \section{Limit of the interior theory: the \texorpdfstring{$\Ls$}{L}-harmonic pro\-blem}
\label{sec:Martin problem}

\subsection{Limit of the interior theory}
A classical approach known for the usual Laplacian to recover the non-homogeneous Dirichlet boundary problem is to concentrate all mass towards the boundary.

\begin{theorem}
\label{prop:u star}
Let~$\Green$ satisfy~\eqref{eq:K0}--\eqref{eq:K3} and~\eqref{eq:defn G nu}.
Let~$A_j = \{1/j < \delta < 2/j \}$,~$j\in\mathbb{N}$,
and
\begin{align*}
		f_j = \frac { |\partial \Omega| \, \chi_{A_j}} {|A_j| \, \delta^\gamma}
\end{align*}
such that~$\| f_j \delta^\gamma \|_{L^1} = |\partial \Omega|$. Then, there exists a function in~$u^\star \in L^1_{loc} (\Omega)$ such that
\begin{equation*}
	\Green(f_j) \rightharpoonup u^\star, \qquad \textrm{ in } L^1 (K) \textrm { for every } K \Subset \Omega.
\end{equation*}
Furthermore,~$u^\star$ is a solution of
\begin{equation*}
	\int_\Omega u^\star \psi = \int_{\partial \Omega} D_\gamma \big( \Green(\psi) \big), \qquad \text{for any } \psi \in L^\infty_c (\Omega).
\end{equation*}
and is given by
\begin{equation}
	\label{eq:repr u star}
	u^\star(x) = \int_ {\partial \Omega} \Gnu (z,x) \; dz.
\end{equation}
\end{theorem}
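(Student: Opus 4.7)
The plan is to proceed in three steps: identify a pointwise candidate for the limit, upgrade the pointwise convergence to weak convergence in $L^1_{\mathrm{loc}}$, and then pass to the limit in the weak-dual formulation.

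First I would compute the pointwise limit. Using the symmetry \eqref{eq:K1} of $\G$,
\[
\Green(f_j)(x)=\frac{|\partial\Omega|}{|A_j|}\int_{A_j}\frac{\G(x,y)}{\delta(y)^\gamma}\,dy,\qquad x\in\Omega.
\]
For $j$ large $A_j$ lies inside the tubular neighbourhood of Remark~\ref{rmk:tubular}, so I parametrize $y=z+\rho\mathbf n(z)$ with $\delta(y)=\rho$, producing a smooth Jacobian $J(z,\rho)$ satisfying $J(z,0)=1$. Assumption~\eqref{eq:defn G nu} together with \eqref{eq:K1} gives $\rho^{-\gamma}\G(x,z+\rho\mathbf n(z))\to\Gnu(z,x)$ as $\rho\downarrow 0$; the bound~\eqref{eq:bounds for Gnu} combined with $|x-z|\geq\delta(x)>0$ provides a $z$-uniform majorant. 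Dominated convergence in $(\rho,z)$ then yields
\[
\Green(f_j)(x)\ \longrightarrow\ \int_{\partial\Omega}\Gnu(z,x)\,dz=:u^\star(x).
\]

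Next, to upgrade this to weak $L^1_{\mathrm{loc}}$ convergence I observe that $\|f_j\delta^\gamma\|_{L^1(\Omega)}=|\partial\Omega|$ is constant, so Lemma~\ref{lem:uniform integrability over compacts} makes $\{\Green(f_j)\}$ uniformly integrable on every $K\Subset\Omega$; by Dunford--Pettis any subsequence has a further subsequence converging weakly in $L^1(K)$. Fatou applied to the previous step gives $u^\star\in L^1_{\mathrm{loc}}(\Omega)$, and the pointwise identification forces the whole sequence to converge weakly to $u^\star$ in $L^1(K)$. Finally, for any $\psi\in L^\infty_c(\Omega)$, Theorem~\ref{thm:L 1 to L 1 loc estimate} yields
\[
\int_\Omega\Green(f_j)\,\psi=\int_\Omega f_j\,\Green(\psi)=\frac{|\partial\Omega|}{|A_j|}\int_{A_j}\frac{\Green(\psi)}{\delta^\gamma},
\]
and since $\Green(\psi)/\delta^\gamma$ extends continuously to $\partial\Omega$ with boundary trace $D_\gamma\Green(\psi)$ by Theorem~\ref{thm:gamma normal derivative}, Proposition~\ref{prop:convergence RHS of Martin wdf} applied with $h\equiv 1$ and $u=\Green(\psi)$ sends the right-hand side to $\int_{\partial\Omega}D_\gamma\Green(\psi)\,dz$; the left-hand side tends to $\int_\Omega u^\star\psi$ by the weak convergence, which establishes the claimed weak-dual identity and, by uniqueness in Theorem~\ref{thm:L 1 to L 1 loc estimate} adapted to this boundary datum, independence of the chosen subsequence.

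The main technical obstacle is the first step: one must commute the limit $\rho\downarrow 0$ with the average over $A_j$ and with integration over $z\in\partial\Omega$. Keeping $x$ fixed in the interior is what makes \eqref{eq:bounds for Gnu} supply a $z$-uniform majorant on $\partial\Omega$ (because $|x-z|\geq\delta(x)>0$), and the smoothness of the tubular map $\Phi$ together with $J(z,0)=1$ is what makes the $|A_j|\sim|\partial\Omega|/j$ normalization collapse exactly onto the surface measure in the limit. Everything else is a routine application of already-established tools.
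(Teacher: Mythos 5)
Your proof is correct in substance and reaches all three conclusions, but it differs from the paper's argument in one respect: you first compute the \emph{pointwise} limit $\Green(f_j)(x)\to\int_{\partial\Omega}\Gnu(z,x)\,dz$ via the tubular parametrization and dominated convergence, and only afterwards invoke uniform integrability and duality. The paper never takes a pointwise limit: it extracts weak $L^1$ limits on an exhausting family of compacts by a diagonal argument, glues them together with the $\psi=\sign(u_m-u_k)\chi_{\{\delta\ge 1/k\}}$ uniqueness trick, identifies the limit through Proposition~\ref{prop:convergence RHS of Martin wdf}, and then obtains~\eqref{eq:repr u star} purely by Fubini from the representation $D_\gamma[\Green(\psi)](z)=\int_\Omega\Gnu(z,y)\,\psi(y)\,dy$ of Theorem~\ref{thm:gamma normal derivative} --- i.e.\ the formula for $u^\star$ is read as an identity of $L^1_{loc}$ functions through the weak-dual pairing, not as a pointwise statement. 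Your duality step is exactly the paper's, so your proof closes even without the pointwise computation; what the extra step buys you is a more concrete picture of $u^\star$, at the price of being the one fragile point of the argument. Indeed,~\eqref{eq:defn G nu} gives, for each fixed $z\in\partial\Omega$ and each sequence approaching $z$, convergence for a.e.\ point of $\Omega$ in the \emph{second} slot, with an exceptional null set that may depend on $z$ (and a priori on the chosen sequence); what you need is, for a fixed $x\in\Omega$, convergence for a.e.\ $z\in\partial\Omega$ along the normal rays, uniformly enough to average over $\rho\in(1/j,2/j)$. Passing from one quantifier order to the other is a Fubini-type interchange that does not follow automatically from~\eqref{eq:defn G nu} as stated. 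Since the weak-dual identity together with the sign-test-function argument already pins down every subsequential weak limit and yields~\eqref{eq:repr u star}, I would either drop the pointwise step or flag it as formal motivation; as written, your proof is nonetheless correct because the duality route carries all the weight.
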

\begin{proof}
	It is clear that~$\supp( f_j )= \overline A_j$ and~$\|f_j \delta^\gamma\|_{L^1(\Omega)} = |\partial \Omega|$. Therefore, due to Lemma~\ref{lem:uniform integrability over compacts}, a subsequence of~$\Green(f_j)$,~$\Green (f_{j}^{(1)})$, is weakly convergent in~$L^1 (\{  \delta  \ge 1  \})$ to a function~$u_1$. A further subsequence,~$\Green (f_{j}^{(2)})$, converges in~$L^1 (\{ \delta  \ge \frac 1 2 \})$ to a function~$u_2$. Iterating the process, we construct sequences~$f_j^{(m)}$ and functions~$u_m$ defined on~$\{\delta>1/m\}$, for every~$m\in\mathbb N$.
	Applying Proposition~\ref{prop:convergence RHS of Martin wdf} we have that
	\begin{align*}
		\int_ \Omega \Green(f_j)  \psi &= \int_{ \Omega } \Green (\psi)  \frac {|\partial \Omega| \, \chi_{ A_j }} {|A_j| \, \delta^\gamma}
		=  \frac{|\partial \Omega|}{|A_j|} \int_{ A_j }  \frac{ \Green (\psi ) }{\delta^\gamma} \to \int_{ \partial \Omega } D_\gamma [\Green (\psi)]
	\end{align*}
	for any~$\psi \in L^\infty_c (\Omega)$. Therefore,
	\begin{equation*}
		\int_\Omega u_m \psi = \int_{ \partial \Omega } D_\gamma [\Green (\psi)], \qquad \text{for any } \psi \in L^\infty_c (\Omega), \textrm{ such that } \supp \psi \subseteq \{ \delta \ge 1 / m \}.
	\end{equation*}
	For~$m > k$, using~$\psi = \sign(u_m-u_k) \chi_{\{\delta \ge 1/k \}}$ as a test function, we check that
	$u_m|_{\delta \ge 1/k} = u_k$.
	We define~$u^\star (x) = u_m (x)$ for any~$m > 1/ \delta(x)$.
	Given~$\psi \in L^\infty_c (\Omega)$, we~$u^\star \psi = u_m \psi~$ for any~$m>1/\textrm{dist}(\supp \psi,\partial \Omega)$. Therefore
	\begin{align*}
		\int_ \Omega u^\star \psi &= \int_{ \partial \Omega } D_\gamma [\Green (\psi)]\,  \qquad \text{for any } \psi \in L^\infty_c (\Omega).
	\end{align*}
	If we now consider the Green representation, we get
	\begin{multline*}
		\int_{ \partial \Omega } D_\gamma [\Green (\psi)] \; dz =  \int_{\partial \Omega} \left( \int_\Omega \Gnu(z,x) \, \psi(x) \; dx \right) dz \ = \\
		=\ \int_\Omega \psi (x) \left( \int_{\partial \Omega} \Gnu(z,x) \; dz \right) dx.
	\end{multline*}
	With this representation formula we show that all convergent subsequences share a  limit, and therefore the whole sequence converges.
	\end{proof}
	
	\begin{remark}
	In Figure~\ref{fig:towards Martin} we show a numerical simulation of the behaviour of the approximating sequence for the case of the SFL, under different values of~$s$.
	\end{remark}
	
\begin{figure}[p]
	\centering
	\begin{minipage}[t][12cm][b]{.32\textwidth}
                    \centering
                    \includegraphics[width=\textwidth]{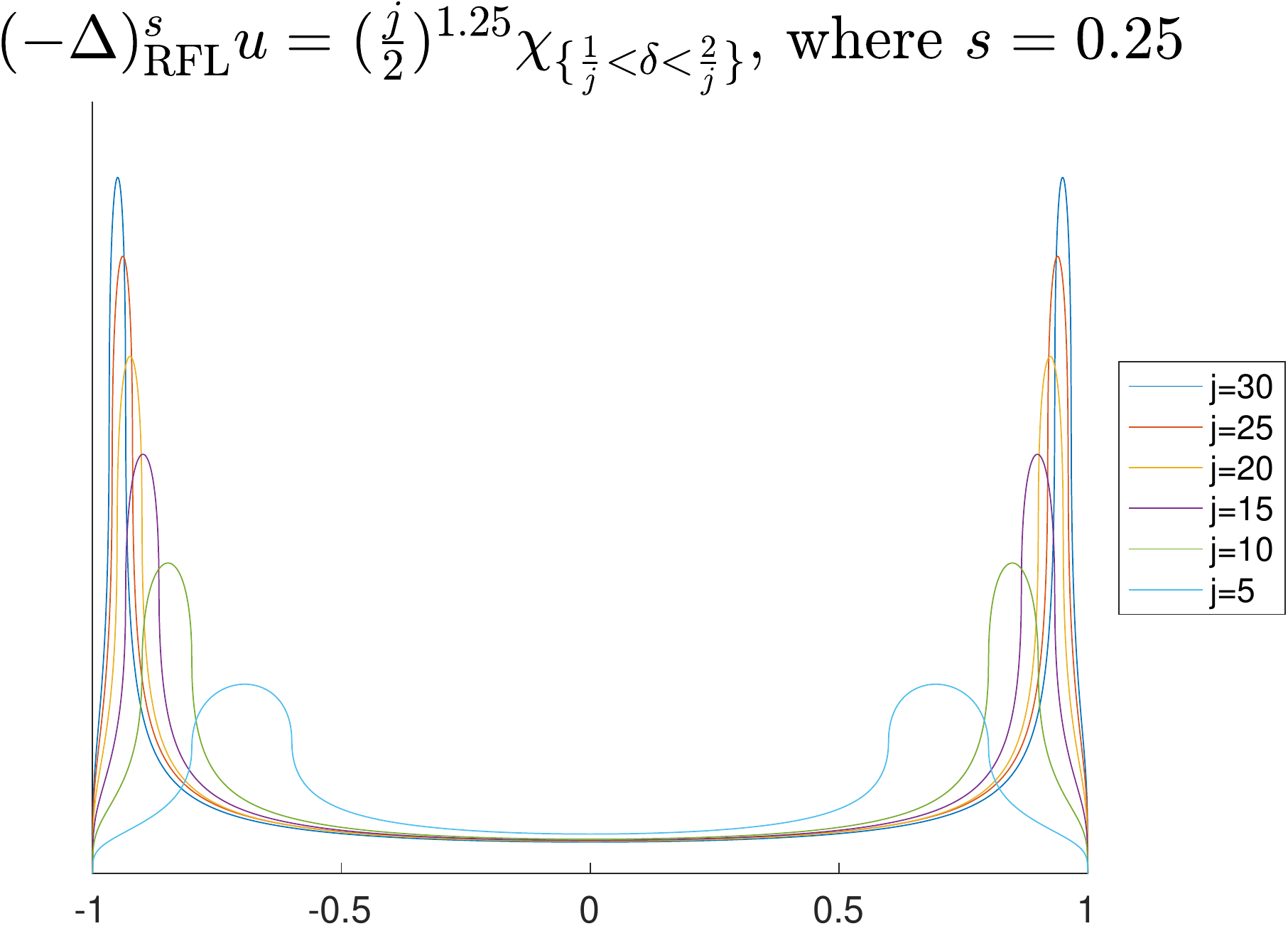}%
                    
                    \includegraphics[width=\textwidth]{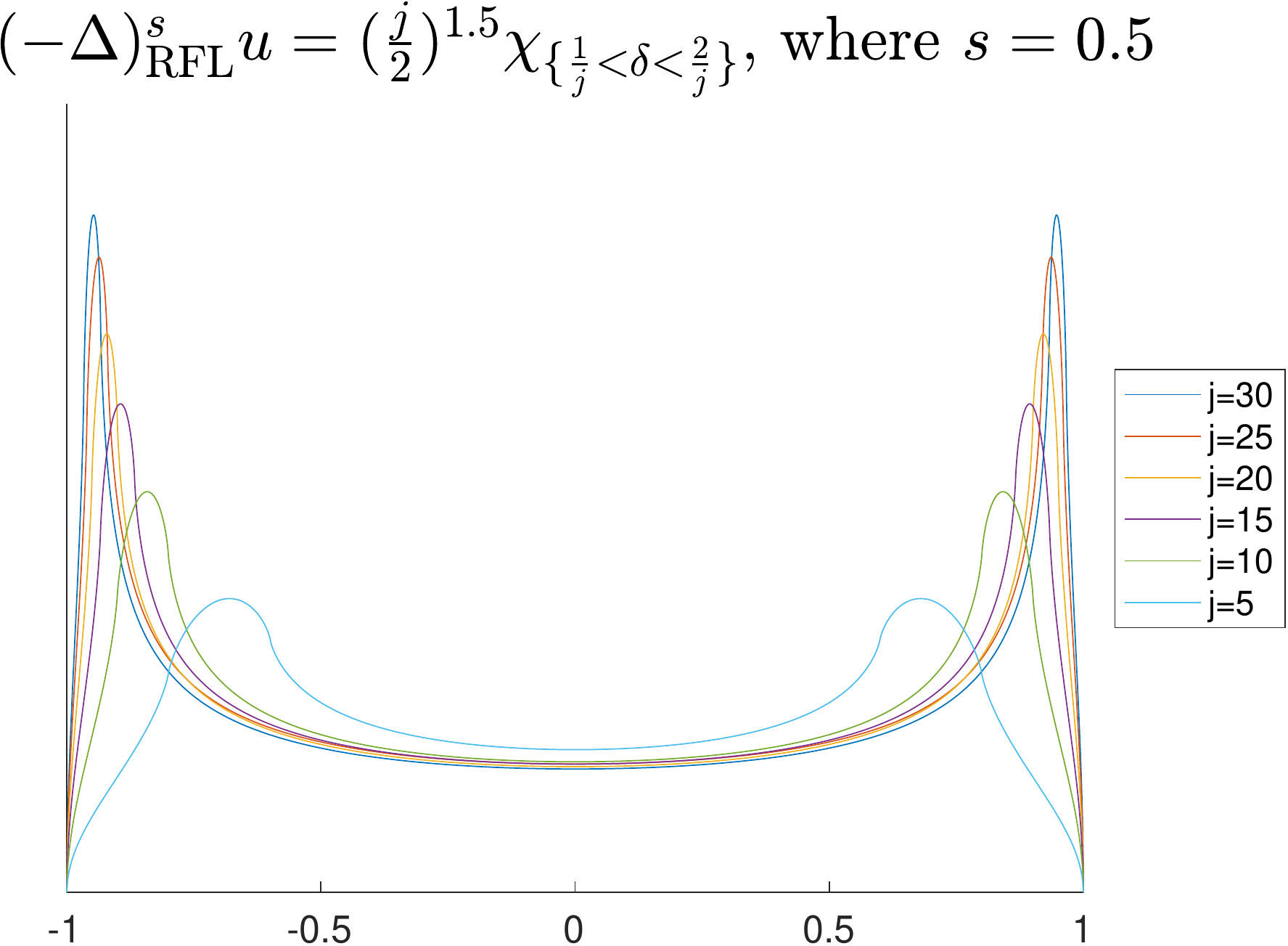}
                    
                    \includegraphics[width=\textwidth]{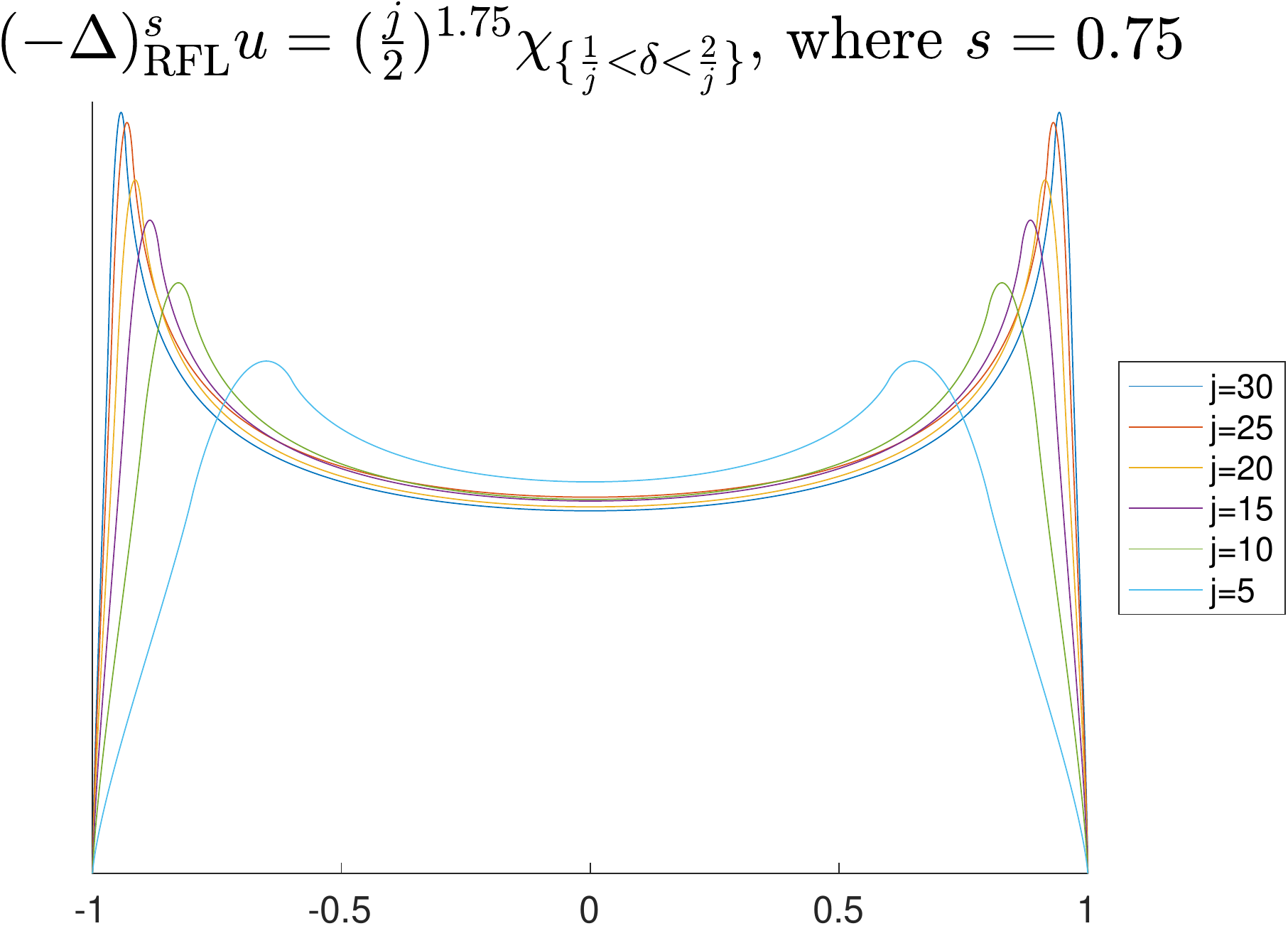}
                    
                    \includegraphics[width=\textwidth]{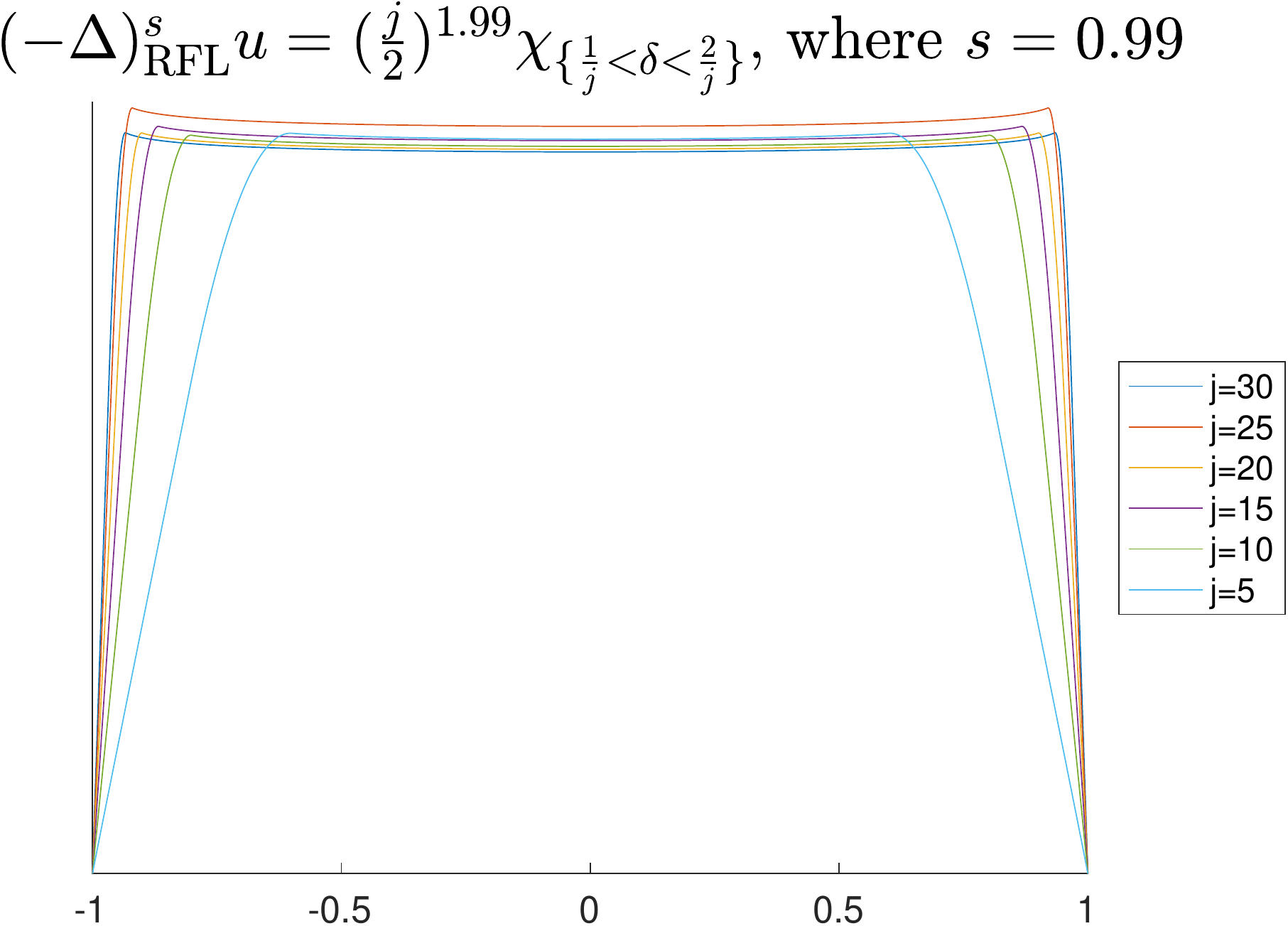}
                    
                    \subcaption{RFL}
                    
                    \label{fig:1b}
    \end{minipage}
    \begin{minipage}[t][12cm][b]{.32\textwidth}
                    \centering
                    \includegraphics[width=\textwidth]{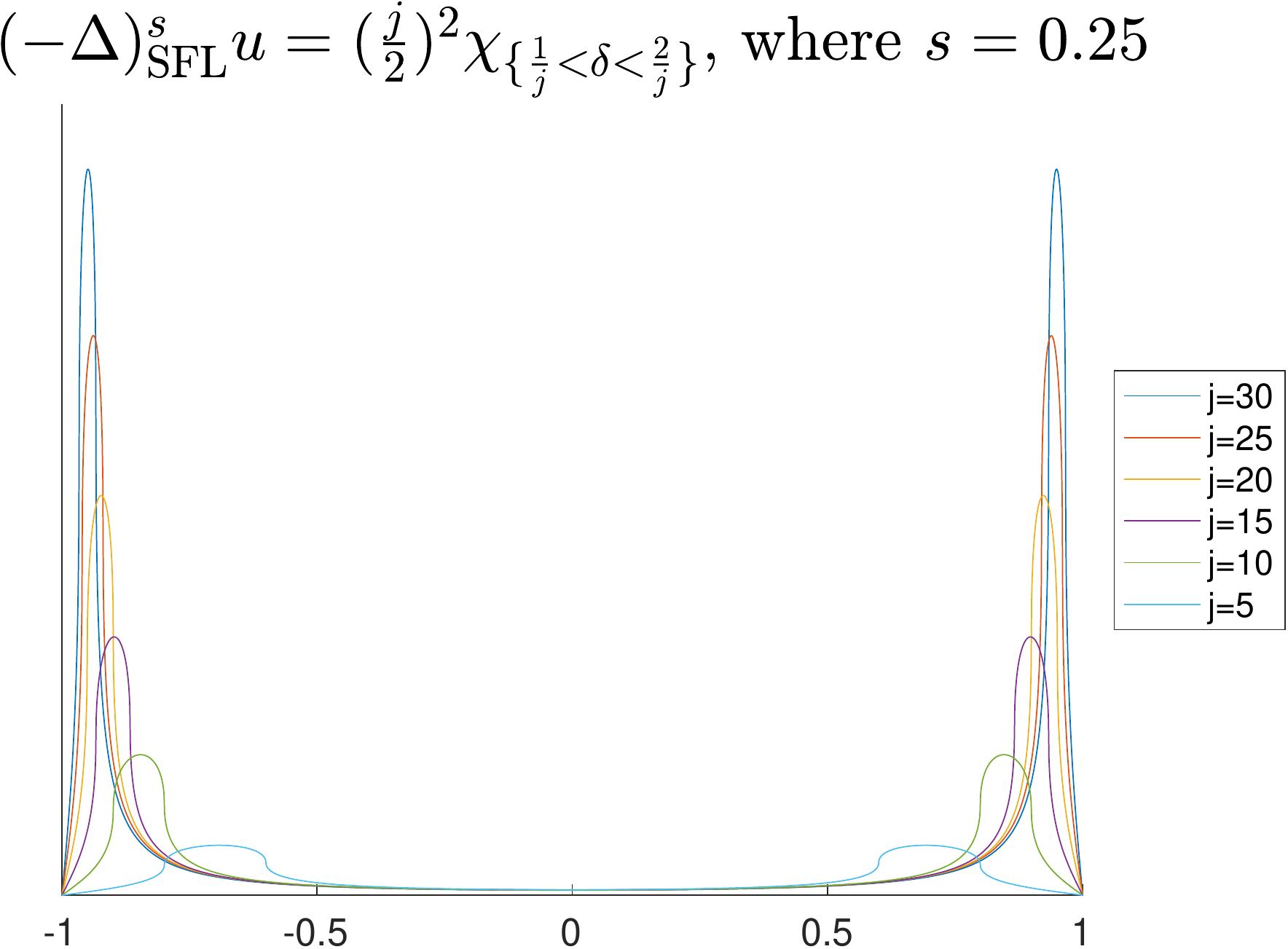}%
                    
                    \includegraphics[width=\textwidth]{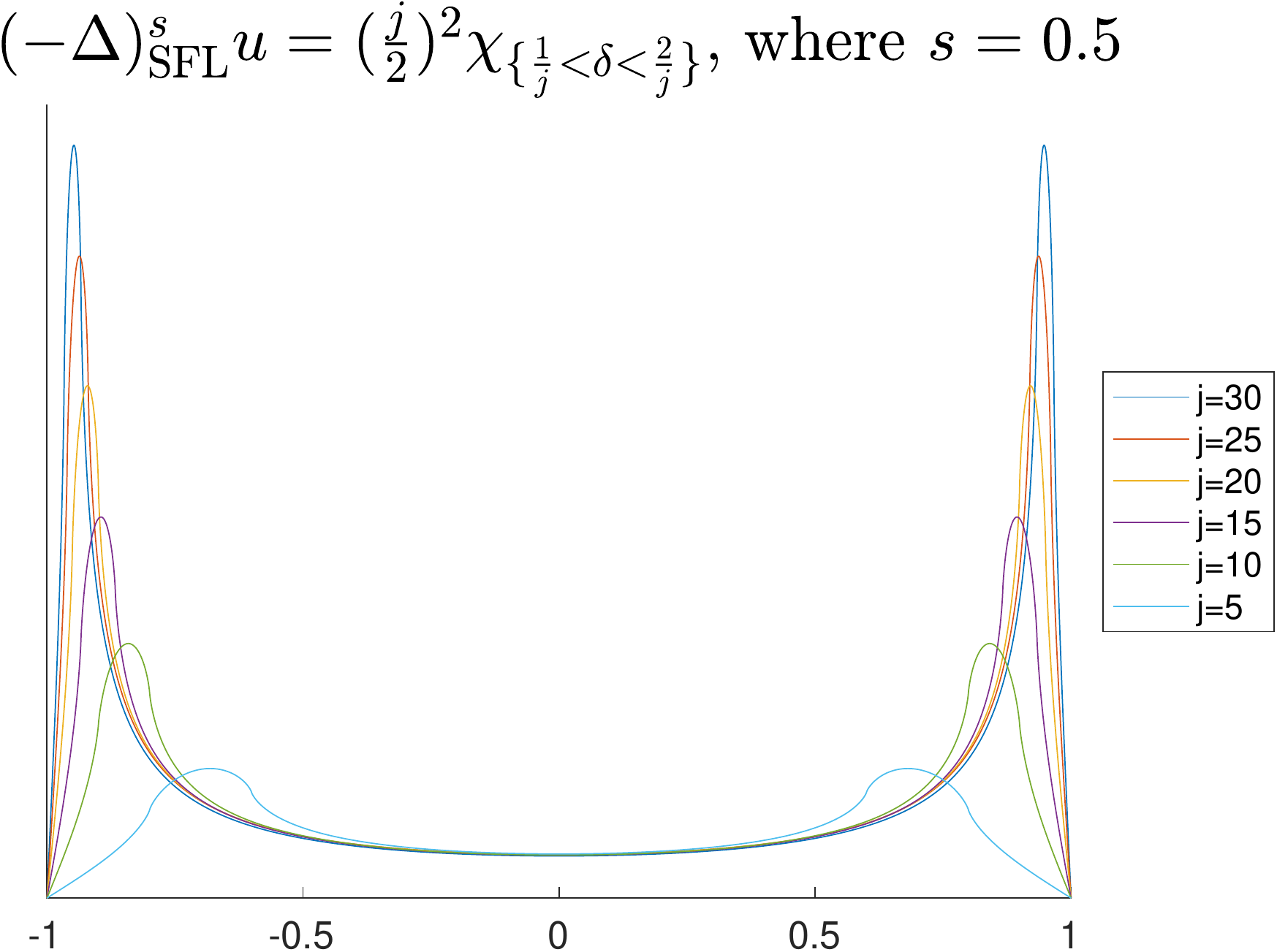}
                    
                    \includegraphics[width=\textwidth]{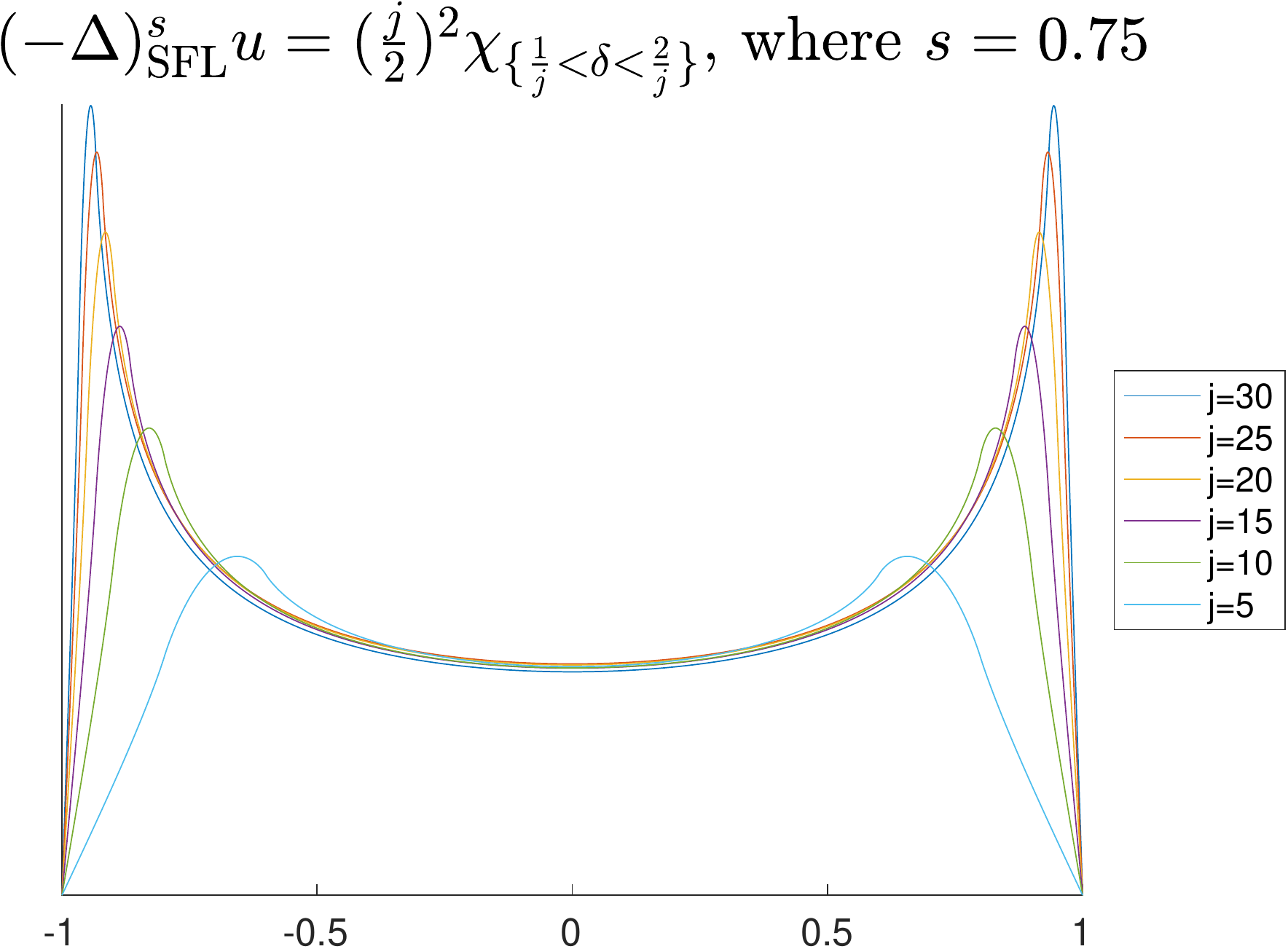}
                    
                    \includegraphics[width=\textwidth]{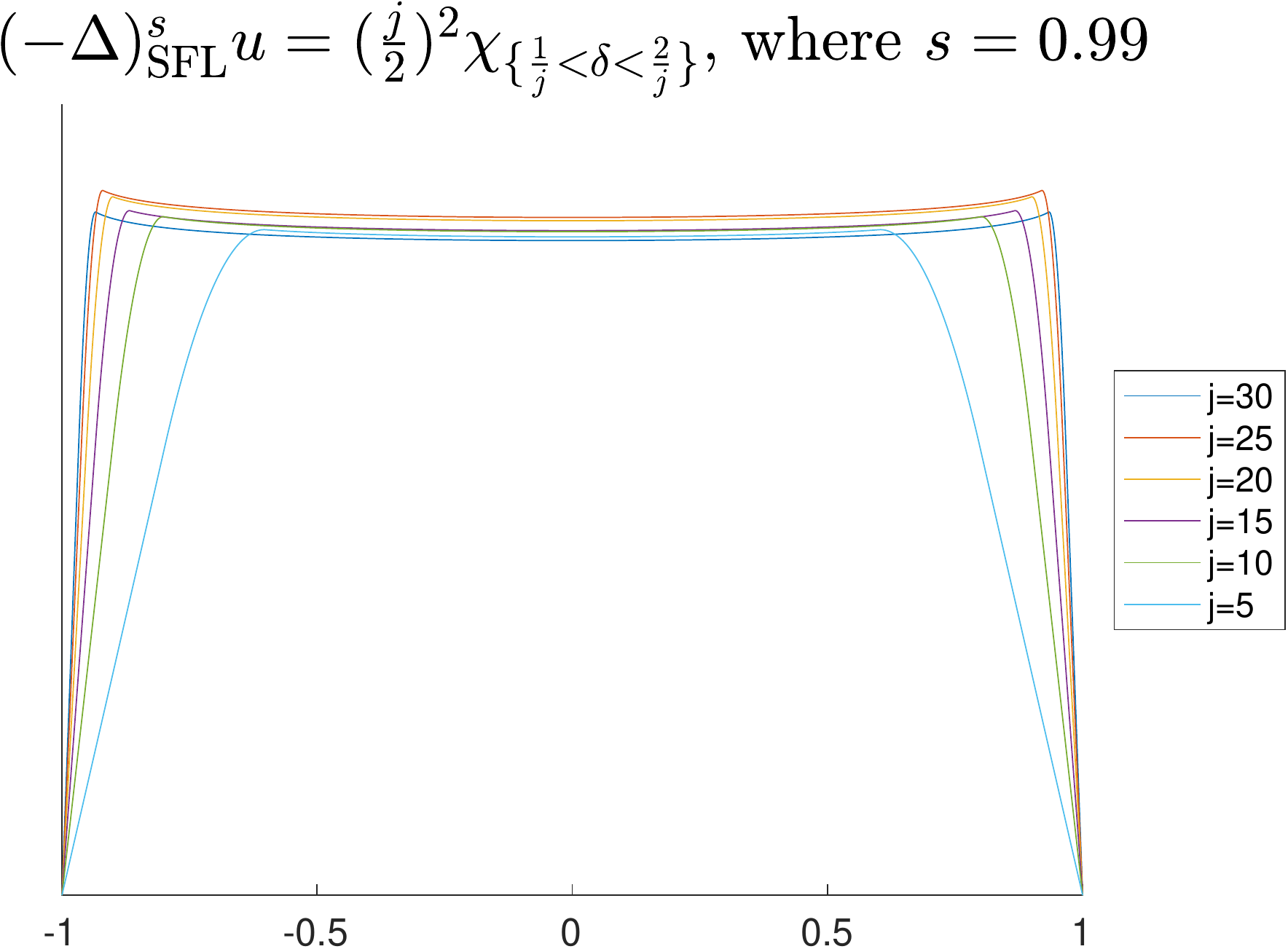}
                    
                    \subcaption{SFL}
                    
                    \label{fig:1b}
    \end{minipage}
    \begin{minipage}[t][12cm][b]{.32\textwidth}
                    \centering
                    
                    \includegraphics[width=\textwidth]{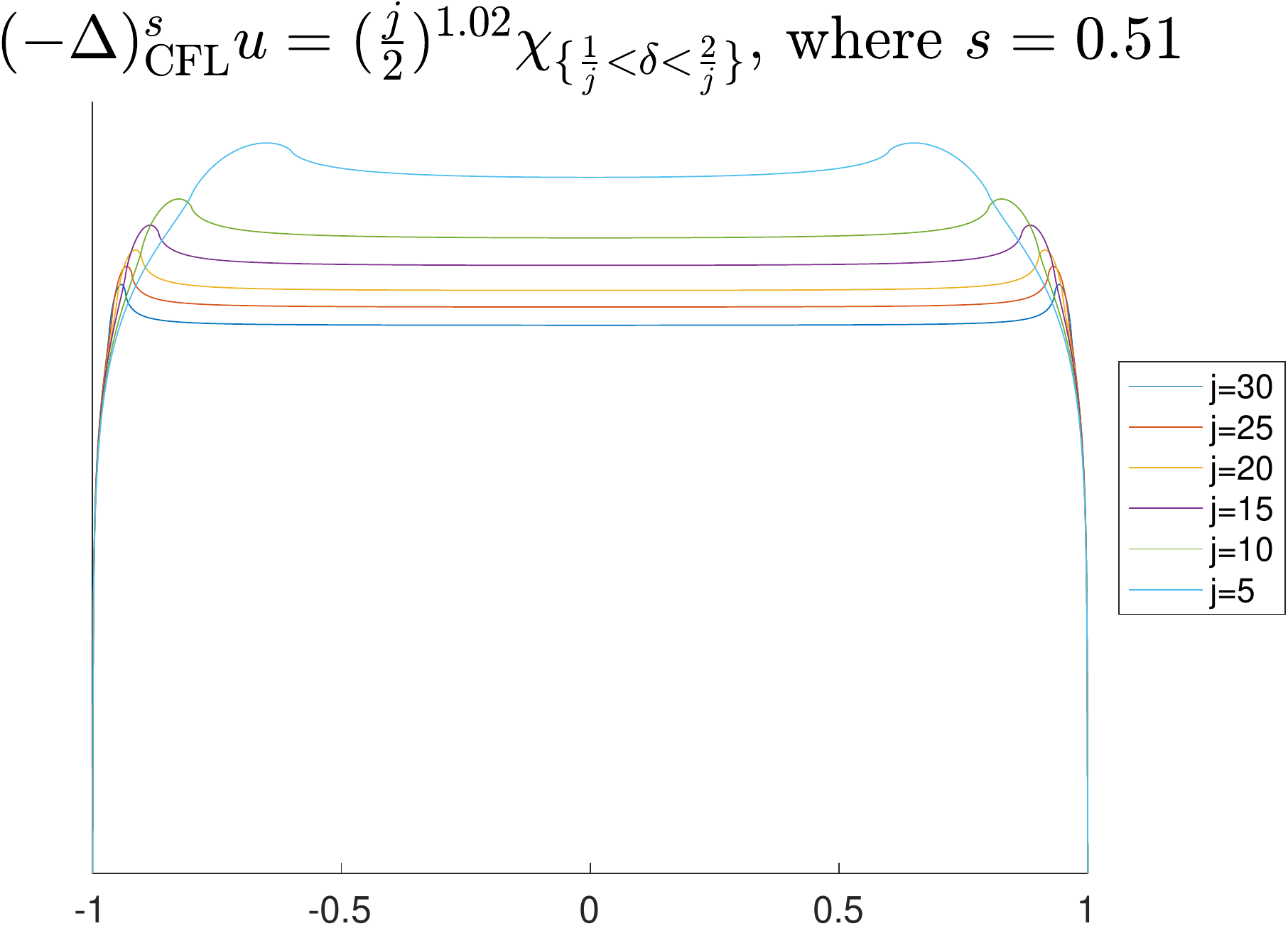}     
                                        
                    \includegraphics[width=\textwidth]{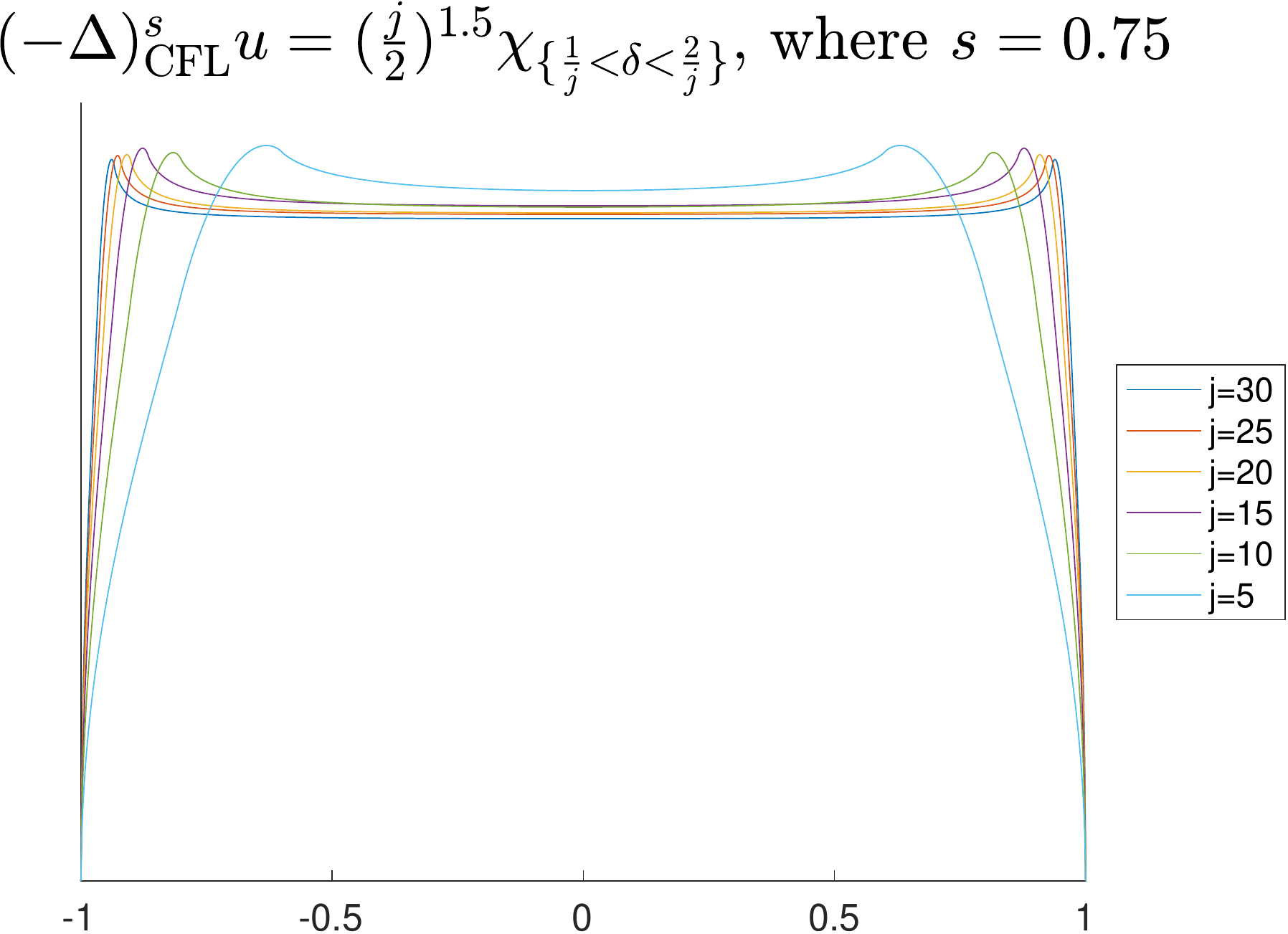}
                    
                    \includegraphics[width=\textwidth]{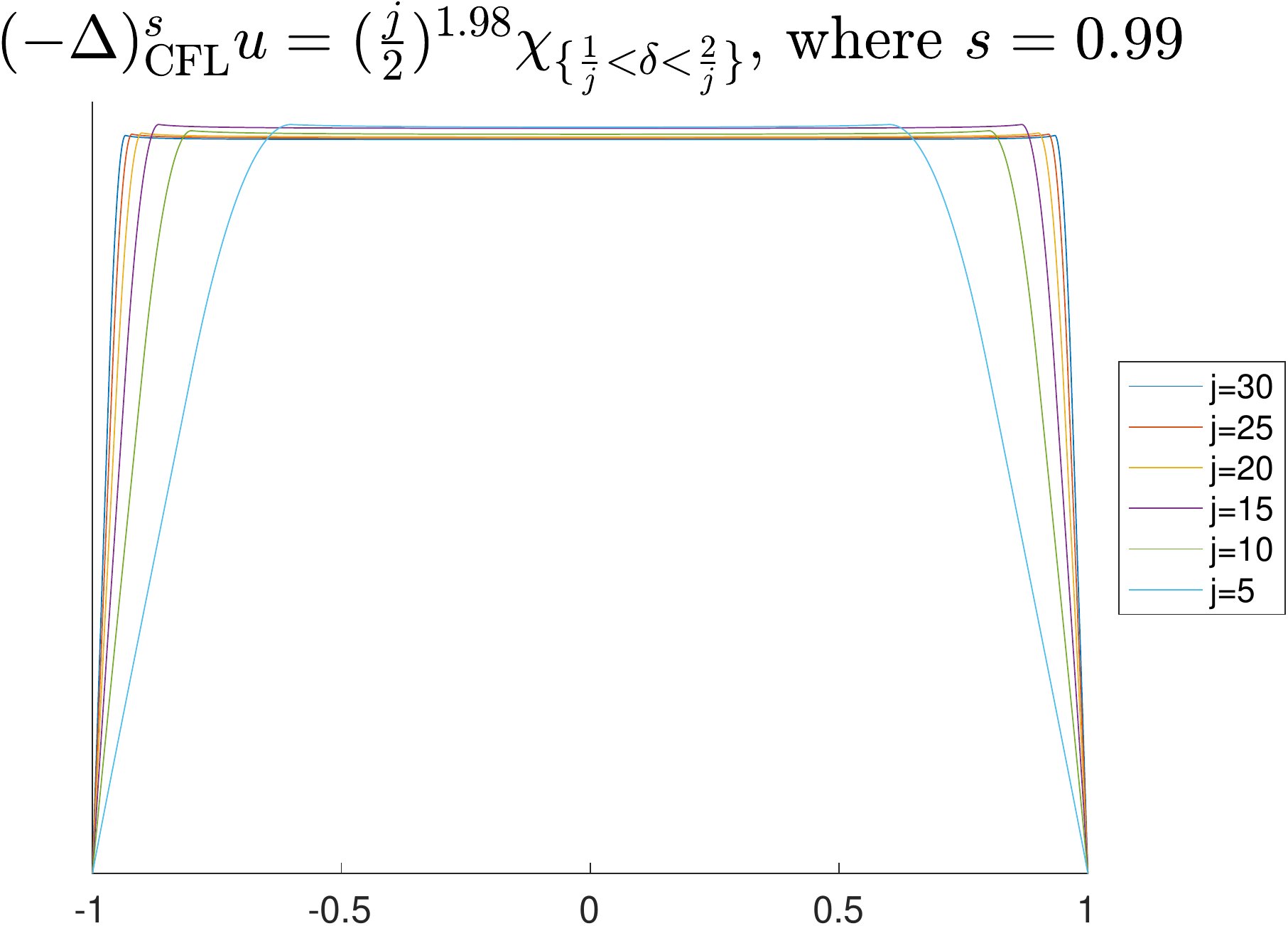}
                    
                    \subcaption{CFL}
                    
    \end{minipage}    
    \caption{
    Numerical solutions of~$ \Ls u = (\frac j 2)^{1+\gamma} \chi_{\frac 1 j < \delta < \frac 2 j}$ in dimension~$n = 1$. In the limit as $j \to +\infty$, we recover the profile of solutions to the $\Ls$-harmonic problem. We implement the schemes introduced in Figure~\ref{fig:subcritical}.}
    \label{fig:towards Martin}
\end{figure}
	
	\begin{corollary}
	 Under the assumptions and notations of Theorem~\ref{prop:u star}, it holds
		\begin{equation}
			\label{eq:estimates for u star}
			u^\star \asymp
			\begin{dcases}
				\delta^{2s-\gamma -1 } & \gamma > s - \frac 1 2 \\
				\delta^\gamma \, \big(1+|\!\ln \delta|\big) & \gamma = s - \frac 1 2 \\
				\delta^\gamma & \gamma < s - \frac 1 2
			\end{dcases}
			\qquad\text{in }\Omega.
		\end{equation}
	\end{corollary}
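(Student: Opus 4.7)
The plan is to combine the integral representation \eqref{eq:repr u star} for $u^\star$ with the sharp pointwise estimate \eqref{eq:bounds for Gnu} for the kernel $\Gnu$. Inserting the latter into the former immediately reduces the problem to estimating a purely geometric boundary integral:
\begin{equation*}
u^\star(x) \asymp \delta(x)^\gamma \int_{\partial\Omega} \frac{dz}{|x-z|^{n-2s+2\gamma}} =: \delta(x)^\gamma \, I(x),
\end{equation*}
so I must show $I(x) \asymp \delta(x)^{2s-2\gamma-1}$ when $\gamma > s-\tfrac12$, $I(x) \asymp 1 + |\!\ln\delta(x)|$ when $\gamma = s-\tfrac12$, and $I(x) \asymp 1$ when $\gamma < s-\tfrac12$. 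Away from the boundary $I$ is finite and bounded below by a positive constant, so only the behaviour as $\delta(x)\downarrow 0$ is at stake.

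To estimate $I(x)$ near $\partial\Omega$, I would work in the tubular coordinates of Remark~\ref{rmk:tubular}. Writing $x = z_0 + \delta(x) \mathbf n(z_0)$ for $z_0\in\partial\Omega$ the boundary projection, the $C^{1,1}$ regularity of $\partial\Omega$ yields the uniform two-sided comparison $|x-z|^2 \asymp |z-z_0|^2 + \delta(x)^2$ for $z$ in a small geodesic neighbourhood $V\subseteq\partial\Omega$ of $z_0$, while $|x-z| \asymp 1$ on $\partial\Omega\setminus V$. The integration over $\partial\Omega\setminus V$ therefore contributes $O(1)$, and after identifying $V$ with a ball in $\mathbb{R}^{n-1}$ and switching to polar coordinates the near piece reduces to a one-dimensional model integral
\begin{equation*}
J(\delta) := \int_0^{r_0} \frac{r^{n-2}\, dr}{(r^2 + \delta^2)^{(n-2s+2\gamma)/2}}, \qquad \delta = \delta(x).
\end{equation*}

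The three regimes of the statement then emerge from the scaling $r = \delta u$, which gives $J(\delta) = \delta^{2s-2\gamma-1}\int_0^{r_0/\delta} u^{n-2}(u^2+1)^{-(n-2s+2\gamma)/2}\,du$, with integrand that decays like $u^{2s-2\gamma-2}$ at infinity. If $\gamma > s - \tfrac12$ the tail is integrable and the scaled integral tends to a positive constant, producing $I(x) \asymp \delta^{2s-2\gamma-1}$ and hence $u^\star \asymp \delta^{2s-\gamma-1}$. If $\gamma < s - \tfrac12$ the original integrand is dominated by $r^{2s-2\gamma-2}$, integrable on $(0,r_0)$, and monotone convergence as $\delta\downarrow 0$ gives $J(\delta) \asymp 1$; hence $u^\star \asymp \delta^\gamma$. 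In the critical case $\gamma = s - \tfrac12$ the tail is precisely $u^{-1}$ and one picks up a logarithm $\ln(r_0/\delta)$, giving $I(x) \asymp 1 + |\!\ln\delta|$ and thus $u^\star \asymp \delta^\gamma(1+|\!\ln\delta|)$. The main technical step will be securing the geometric comparison $|x-z|^2 \asymp |z-z_0|^2 + \delta(x)^2$ uniformly on a neighbourhood of $\partial\Omega$; once that is extracted cleanly from the tubular neighbourhood theorem, the remainder is a standard one-dimensional scaling trichotomy.
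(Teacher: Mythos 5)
Your proposal is correct and follows exactly the paper's own route: plug the kernel bound \eqref{eq:bounds for Gnu} into the representation \eqref{eq:repr u star} and reduce to the boundary integral $\int_{\partial\Omega}|x-z|^{2s-2\gamma-n}\,dz$, whose three asymptotic regimes give the trichotomy. The only difference is that the paper states the estimate of that boundary integral without proof, whereas you supply the standard (and correct) justification via tubular coordinates, the comparison $|x-z|^2\asymp|z-z_0|^2+\delta(x)^2$, and the one-dimensional scaling argument.
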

	\begin{proof}
	This follows by plugging~\eqref{eq:bounds for Gnu} into~\eqref{eq:repr u star}.
	Indeed,
	\begin{align}\label{eq:estimates for u star 2}
	u^\star(x)\asymp {\delta(x)}^\gamma \int_{\partial\Omega} \frac{dz}{{|z-x|}^{n-2s+2\gamma}},
	\qquad x\in\Omega,
	\end{align}
	where
	\begin{align*}
	\int_{\partial\Omega} \frac{dz}{{|z-x|}^{n-2s+2\gamma}}
	\asymp\begin{dcases}
				{\delta(x)}^{2s-2\gamma-1} & 2s-2\gamma-1 < 0 \\
				1+|\!\ln \delta(x)| & 2s-2\gamma-1 = 0 \\
				1 & 2s-2\gamma-1 > 0
	\end{dcases}
	\qquad x\in\Omega,
	\end{align*}
	which completes the proof.
\end{proof}

\begin{remark}
	Notice that, for~$\gamma > s - \frac 1 2$,~$u^\star$ has the limit rate~$\delta^{2s-\gamma-1}$ which is not accessible to solutions of the interior problem.
\end{remark}


\begin{remark}
	The function~$u^\star$ is a large solution
	(\textit{i.e.},~$u^\star (x) \to +\infty$ as~$\delta(x) \to 0$)  if
		and only if~$2s-\gamma - 1 <  0$.
	 We have:
	\begin{enumerate}
		\item In the RFL case~$\gamma = s$, so~$2s-\gamma-1 = s-1 <  0$.
		\item In the SFL case~$\gamma = 1$, so~$2s-\gamma-1 = 2(s-1) <  0$.
		\item In the CFL case~$\gamma = s - \frac 1 2$, so~$2s-\gamma - 1 =0$ for~$\frac 1 2 < s < 1$;
			in this case~$u^\star$ is not singular.
	\end{enumerate}
\end{remark}

\subsection{The \texorpdfstring{$\Ls$}{L}-harmonic problem}

For a self-adjoint operator in our class of study it makes sense to consider the following boundary problem
\begin{equation}
\label{eq:nonhomogeneous Martin with differential operator}
\int_ \Omega u \, \Ls \varphi = \int_{ \partial \Omega } h \, D_\gamma \varphi
\end{equation}
for some suitable test functions~$\varphi$. In the case of the usual Laplacian, this is the non-homogenenous Dirichlet problem with data~$h$.
This very weak formulation was first studied in~\cite{Brezis1971}.

Passing to our weak-dual formulation,~\eqref{eq:nonhomogeneous Martin with differential operator} is written
\begin{equation}
	\label{eq:nonhomogeneous Martin}
		\int_ \Omega u \psi = \int_{ \partial \Omega } h \, D_\gamma [\Green (\psi)] \qquad \text{for any } \psi \in L^\infty_c (\Omega).
	\end{equation}
	
	Heuristically,~\eqref{eq:nonhomogeneous Martin} can be read as an~$\Ls$-harmonicity condition for~$u$ in~$\Omega$, \textit{i.e.},~$\Ls u=0$ in~$\Omega$.
	
		In order to understand this weak-dual problem, we proceed informally. If one takes~$\psi = \mu_x$, the Dirac delta, we obtain
		\begin{equation*}
			u(x) = \int_{\partial \Omega} h(z) \, \Gnu(z,x) \; dz.
		\end{equation*}
		We will see in Theorem~\ref{thm:Martin classical traces}
		that
		\begin{equation*}
			\frac{u(x)}{u^\star(x)} = \int_{\partial \Omega} h(z) \frac{\Gnu(z,x)}{u^\star(x)} \; dz \longrightarrow h(\theta).
		\end{equation*}
		
Hence, in some sense~\eqref{eq:nonhomogeneous Martin} is a formulation of the problem~\eqref{eq:Martin}. We will devote this section to rigorously proving these intuitions.

\subsection{Existence, uniqueness and kernel representation}

We have the following theorem of well-posedness.

\begin{theorem}
	\label{thm:well posedness of Martin problem}
	Let~$\Omega$ be a smooth domain, assume~\eqref{eq:K0}--\eqref{eq:K3} and~\eqref{eq:defn G nu} and~$h \in L^1 (\partial \Omega)$. Then, there exists a unique~$u \in L_{loc}^1 (\Omega)$ satisfying~\eqref{eq:nonhomogeneous Martin}.
	Furthermore,
	\begin{enumerate}
	\item This unique solution can be represented by
	\begin{equation}
		\label{eq:solution Martin kernel G nu}
		u(x) = \int_{\partial \Omega} \Gnu (z,x) \, h(z) \; dz,
		\qquad \text{for }x\in\Omega.
	\end{equation}
	\item We have the estimate
	\begin{align}
		\| u \|_{L^\infty (K)} &\le C\, \mathrm{dist}(K, \partial \Omega)^{2s-n-\gamma} \| h \|_{L^1(\partial \Omega)}. \label{martin estimate 2}
	\end{align}
	\item
		If~$h \in C (\partial \Omega)$ then there exists a sequence~${(f_j)}_{j\in\mathbb{N}}\subset L^1 ( \Omega, \delta^\gamma )$ such that
	\begin{equation}
	\Green (f_j) \rightharpoonup u \qquad \textrm{ in } L^1_{loc} (\Omega), \text{ as }j\uparrow\infty. \label{martin limiting}
	\end{equation}
	\end{enumerate}
\end{theorem}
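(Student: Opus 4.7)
My plan is to construct $u$ explicitly through the natural candidate~\eqref{eq:solution Martin kernel G nu}, verify it solves~\eqref{eq:nonhomogeneous Martin} by Fubini and Theorem~\ref{thm:gamma normal derivative}, obtain uniqueness and the $L^\infty$ bound~\eqref{martin estimate 2} from standard duality and pointwise estimates on $\Gnu$, and finally construct the approximating interior data by concentrating $h$ on the tubular strip $A_j$ in the spirit of Theorem~\ref{prop:u star}. The main obstacle is this last step: one must choose an extension of $h$ to a neighbourhood of $\partial\Omega$ that is continuous (so that Proposition~\ref{prop:convergence RHS of Martin wdf} applies), keep $\|f_j\delta^\gamma\|_{L^1(\Omega)}$ bounded (to invoke Lemma~\ref{lem:uniform integrability over compacts}), and upgrade subsequential weak convergence to full weak convergence via the uniqueness already proven.

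\emph{Existence, representation and $L^\infty$ bound.} Set $u(x):=\int_{\partial\Omega}\Gnu(z,x)\,h(z)\,dz$. Using $\delta(x)\leq|z-x|$ for $z\in\partial\Omega$, the estimate~\eqref{eq:bounds for Gnu} yields $\Gnu(z,x)\leq C|z-x|^{2s-n-\gamma}$, uniformly bounded in $z$ for $x$ in any $K\Subset\Omega$. In particular,
\begin{equation*}
|u(x)|\leq C\int_{\partial\Omega}\frac{\delta(x)^\gamma}{|z-x|^{n-2s+2\gamma}}|h(z)|\,dz\leq C\,\mathrm{dist}(K,\partial\Omega)^{2s-n-\gamma}\|h\|_{L^1(\partial\Omega)},\qquad x\in K,
\end{equation*}
giving~\eqref{martin estimate 2} and $u\in L^\infty_{loc}(\Omega)\subset L^1_{loc}(\Omega)$. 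For $\psi\in L^\infty_c(\Omega)\subset L^1_c(\Omega)$, Theorem~\ref{thm:gamma normal derivative} gives $\int_\Omega\Gnu(z,x)\psi(x)\,dx=D_\gamma[\Green(\psi)](z)$, so Fubini yields
\begin{equation*}
\int_\Omega u\psi=\int_{\partial\Omega}h(z)\int_\Omega\Gnu(z,x)\psi(x)\,dx\,dz=\int_{\partial\Omega}h\,D_\gamma[\Green(\psi)]\,dz,
\end{equation*}
which is~\eqref{eq:nonhomogeneous Martin}. Uniqueness follows from a standard duality argument: if $u_1,u_2\in L^1_{loc}(\Omega)$ both satisfy~\eqref{eq:nonhomogeneous Martin}, then $\int_\Omega(u_1-u_2)\psi=0$ for every $\psi\in L^\infty_c(\Omega)$; plugging in $\psi=\sign(u_1-u_2)\chi_K$ for arbitrary $K\Subset\Omega$ forces $u_1=u_2$ a.e.

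\emph{Approximation for $h\in C(\partial\Omega)$.} Let $\pi:\{0<\delta<\ee\}\to\partial\Omega$ be the nearest-point projection from Remark~\ref{rmk:tubular}, and set $H:=h\circ\pi$, continuous on a neighbourhood of $\partial\Omega$ with $H|_{\partial\Omega}=h$. Following Theorem~\ref{prop:u star}, with $A_j=\{1/j<\delta<2/j\}$, define
\begin{equation*}
f_j:=\frac{|\partial\Omega|}{|A_j|}\,\frac{H\,\chi_{A_j}}{\delta^\gamma}.
\end{equation*}
Then $\|f_j\delta^\gamma\|_{L^1(\Omega)}\leq|\partial\Omega|\,\|h\|_{L^\infty(\partial\Omega)}$, so Lemma~\ref{lem:uniform integrability over compacts} makes $\{\Green(f_j)\}$ uniformly integrable on every $K\Subset\Omega$. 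Applying Proposition~\ref{prop:convergence RHS of Martin wdf} to the continuous $H$, for every $\psi\in L^\infty_c(\Omega)$
\begin{equation*}
\int_\Omega\Green(f_j)\psi=\int_\Omega f_j\,\Green(\psi)=\frac{|\partial\Omega|}{|A_j|}\int_{A_j}H\,\frac{\Green(\psi)}{\delta^\gamma}\,dx\longrightarrow\int_{\partial\Omega}h\,D_\gamma[\Green(\psi)]\,dz.
\end{equation*}
A diagonal extraction as in Theorem~\ref{prop:u star} produces a weakly $L^1_{loc}$-convergent subsequence whose limit satisfies~\eqref{eq:nonhomogeneous Martin}, and by uniqueness this limit coincides with $u$. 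Since every weakly $L^1_{loc}$-convergent subsequence has limit $u$, the whole sequence converges weakly to $u$ in $L^1_{loc}(\Omega)$, establishing~\eqref{martin limiting}.
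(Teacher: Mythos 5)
Your proof is correct, but it inverts the logical order of the paper's argument in a way worth noting. The paper constructs the solution \emph{as} the weak-$L^1_{loc}$ limit of $\Green(f_j)$ (first for $0\le h\in C(\partial\Omega)$, via a diagonal extraction over an exhaustion of $\Omega$ by compacts and a gluing of the local limits $u_K$, then for $0\le h\in L^1$ by a further approximation with continuous $h_k$, then for signed $h$ by splitting $h=h_+-h_-$), and only afterwards reads off the kernel representation~\eqref{eq:solution Martin kernel G nu} by exchanging the order of integration. You instead take the representation formula as the \emph{definition} of $u$, verify directly (via the pointwise bound~\eqref{eq:bounds for Gnu}, Fubini, and Theorem~\ref{thm:gamma normal derivative}) that it solves~\eqref{eq:nonhomogeneous Martin} for arbitrary $h\in L^1(\partial\Omega)$, and then use uniqueness to identify the limit of every convergent subsequence of $\Green(f_j)$ with this $u$. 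The paper itself remarks that ``this kernel representation can be rigorously justified on its own,'' so your route is explicitly sanctioned; its payoff is that existence and the $L^\infty$ estimate come at once for all of $L^1(\partial\Omega)$ with no sign decomposition and no gluing argument, and the subsequence principle immediately upgrades to convergence of the full sequence. What it costs is that you must justify Fubini on $\partial\Omega\times\Omega$ (which you do, since $\Gnu(z,\cdot)$ is uniformly bounded on $\supp\psi$; strictly one should also note joint measurability of $\Gnu$, a point the paper glosses over as well). The construction of $f_j=|\partial\Omega|\,H\chi_{A_j}/(|A_j|\delta^\gamma)$ with $H=h\circ\pi$ and the use of Lemma~\ref{lem:uniform integrability over compacts} and Proposition~\ref{prop:convergence RHS of Martin wdf} coincide with the paper's.
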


\begin{proof}
	The uniqueness is immediate to prove. Let~$u_1, u_2$ be two solutions, then~$u = u_1 - u_2$ satisfies
	\begin{equation*}
		\int_\Omega u \psi = 0 \qquad \text{for any } \psi \in L^\infty_c (\Omega).
	\end{equation*}
	In particular, let~$K\Subset \Omega$ and take~$\psi = \sign (u) \chi_K$. Then
	\begin{equation*}
		\int_K |u| = 0.
	\end{equation*}
	Since this is true for all~$K$, we have that~$u = 0~$ a.e. in~$\Omega$. Hence~$u_1 = u_2$.\smallskip
	
	The kernel representation~\eqref{eq:solution Martin kernel G nu} follows as in the proof of Theorem~\ref{prop:u star}, by exchanging the order of integration in~\eqref{eq:nonhomogeneous Martin}. Notice that this kernel representation can be rigurously justified on its own and therefore grant uniqueness. Nevertheless, since we will construct it as a limit of the interior theory, this is not needed.\smallskip

	We prove existence,~\eqref{martin estimate 2}, and~\eqref{martin limiting} simultaneously.
	We split the proof into different steps.\smallskip
	
	Let us first assume that~$0 \le h \in C (\partial \Omega)$. Using the notations defined in Remark~\eqref{rmk:tubular}, we extend the definition of~$h$ to the interior by setting
	\begin{equation*}
		H(x) = h (z(x)).
	\end{equation*}
	Recall that~$\delta(x)=|x-z(x)|,\ z(x)\in\partial\Omega$, and notice that~$H\in C(\{ \delta < \ee\})$.
	
	Let us define, for~$j > \frac 1 \varepsilon$, the sequence
	\begin{equation*}
		f_j = H \, \frac {|\partial \Omega| \chi_{A_j} }{|A_j| \delta^\gamma} \qquad \in L^1 (\Omega).
	\end{equation*}
	We check that this sequence is bounded in~$L^1 (\Omega, \delta^\gamma)~$ by estimating
	\begin{equation*}
		\int_ \Omega f_j \delta^\gamma  = \int_{A_j} \frac{ |\partial \Omega|}{|A_j|} H = \frac{|\partial \Omega|}{|A_j|} \int_{A_j} H \le |\partial \Omega| \| H \|_{L^\infty(\Omega)}.
	\end{equation*}
	We define
	$
		u_j = \Green(f_j).
	$
	
	We now show local~$L^1$-weak convergence.
	Let~$K \Subset \Omega$. For any~$A \subset K$ we have that
	\begin{equation*}
		\int_A u_j \le C_{K,\beta} |A|^\beta \| f_j \delta^\gamma \|_{L^1(\Omega)} \le C_{K,\beta} \| H \|_{L^\infty(\Omega)} |A|^\beta,
	\end{equation*}
	for some~$\beta > 0$, by Lemma~\ref{lem:uniform integrability over compacts}. Therefore the sequence
	$u_j$ is equi-integrable in~$K$ and it admits
	a subsequence~$u_{j_k}$ weakly convergent to some
	$u_K\in L^1(K)$.
	That is, if we consider~$\psi \in L^\infty (\Omega)$, with~$\supp \psi \subseteq K$, we have that
	\begin{equation*}
		\int_{ \Omega } u_{j_k} \psi \to \int_\Omega u_K \psi,
		\qquad\text{as }k\uparrow\infty.
	\end{equation*}
	On the other hand, by Proposition~\ref{prop:convergence RHS of Martin wdf}, we have that
	\begin{align*}
		\int_\Omega \Green(\psi) f_j &= \frac{|\partial\Omega|}{|A_j|} \int_{A_j} \frac{\Green (\psi)}{\delta^\gamma} \, H   \longrightarrow \int_{\partial \Omega} D_\gamma [\Green(\psi)] \, h.
	\end{align*}
	Therefore
	\begin{equation*}
		\int_\Omega u_K \psi  =  \int_{\partial \Omega} D_\gamma [\Green(\psi)] h , \qquad \text{for any } \psi \in L^\infty, \text{ with } \supp \psi \subseteq K.
	\end{equation*}
	
	For two compacts~$K,K'\Subset\Omega$ and the corresponding~$u_K,u_{K'}$ built as above, we actually have	$u_K = u_{K'}$ in~$K \cap K'$. Indeed,
	let us consider the test function
	\begin{equation*}
		\psi = \begin{dcases}
 					\sign(u_K - u_{K'}) & \text{in } K \cap K', \\
 					0 & \text{in }\Omega\setminus(K\cap K').
 				\end{dcases}
	\end{equation*}
	It is an admissible test function for both~$u_K$ and~$u_{K'}$. Therefore
	\begin{equation*}
		\int_{K \cap K'} |u_K - u_{K'}| = 0.
	\end{equation*}
	
	We define now
	\begin{equation*}
		u (x) = u_{K(x)} (x) \qquad x\in\Omega,\textrm{ where } K(x) = \{ y\in\Omega : \delta(y) \ge \delta(x)/2\}.
	\end{equation*}
	We have shown above that any converging subsequence of~$u_j$ converges weakly to~$u$ over compacts. In particular,~$u_j \rightharpoonup u$ in~$L^1_{loc}$. By construction~$u$ solves~\eqref{eq:nonhomogeneous Martin}.
	
	Passing to the limit the estimate
	in Theorem~\ref{thm:L 1 to L 1 loc estimate}
	\begin{equation*}
		\int_K u_j \le C_K \| f_j \delta^\gamma \|_{L^1},
	\end{equation*}
	we deduce that, as~$j \uparrow\infty$,
	\begin{equation*}
		\int_K u \le C_K \| h \|_{L^1 (\partial \Omega)}.
	\end{equation*}
	
	Moreover, in view of~\eqref{eq:L 1 to L inf outside support}
	we have that
	\begin{equation*}
		\| u_j \|_{L^\infty(K)} \le C \mathrm{dist}(K, A_j)^{2s-n-\gamma} \| f_j \delta^\gamma \|_{L^1(\Omega)}.
	\end{equation*}
	We deduce that the sequence~$u_j$ converges weak-$\star$ in~$L^\infty (K)$ to~$u$, and that
	\begin{equation*}
		\| u \|_{L^\infty (K)} \le C \mathrm{dist}(K, \partial \Omega)^{2s-n-\gamma} \| h \|_{L^1 (\partial \Omega)}.
	\end{equation*}
	
	We now consider
	$ 0 \le h \in L^1 (\partial \Omega)$.
	We take an approximation sequence
	$0\le h_k \in C (\partial \Omega)$
	converging to~$h$ in~$L^1(\partial \Omega)$.
	The sequence~$u_k$ of solutions corresponding to~$h_k$ can be
	constructed through the previous step. Due to the estimates, we can pass to the limit over compacts and apply the uniqueness reasoning above
	to recover a function~$u \in L^\infty_{loc}$ solution of~\eqref{eq:nonhomogeneous Martin} with data~$h$.
	
	For~$h \in L^1 (\partial \Omega)$, we can decompose it as~$h = h_+ - h_-$, construct solutions~$u_1$ and~$u_2$ corresponding to~$h_+$ and~$h_-$ and recover~$u = u_1 - u_2$ satisfying all properties.
	
	This completes the proof.
	\end{proof}

\begin{corollary}
	\label{cor: u / u star in L1}
	In the assumptions of Theorem~\ref{thm:well posedness of Martin problem} and for~$u$ defined as in~\eqref{eq:solution Martin kernel G nu},
	we have that
		\begin{equation*}
			\left\|  \frac{u}{u^\star} \right\|_{L^1(\Omega)} \le C \| h \|_{L^1 (\partial \Omega)}.
		\end{equation*}
\end{corollary}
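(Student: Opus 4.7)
The plan is to apply Fubini and reduce to a uniform estimate, then use the explicit asymptotics of $\Gnu$ and $u^\star$ together with the elementary bound $\delta(x)\leq |z-x|$ for $z\in\partial\Omega$.

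Start from the kernel representation~\eqref{eq:solution Martin kernel G nu}. Since $\Gnu\geq 0$, we have $|u(x)|\leq \int_{\partial\Omega}\Gnu(z,x)\,|h(z)|\,dz$, so Fubini gives
\begin{equation*}
\int_\Omega \frac{|u(x)|}{u^\star(x)}\,dx \;\leq\; \int_{\partial\Omega} |h(z)|\,\Phi(z)\,dz,
\qquad
\Phi(z):=\int_\Omega \frac{\Gnu(z,x)}{u^\star(x)}\,dx.
\end{equation*}
Hence it suffices to prove $\Phi\in L^\infty(\partial\Omega)$ with a bound depending only on $\Omega,s,\gamma$.

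To estimate $\Phi(z)$, use the two-sided bounds $\Gnu(z,x)\asymp \delta(x)^\gamma|z-x|^{2s-n-2\gamma}$ from~\eqref{eq:bounds for Gnu} together with the three-regime estimate~\eqref{eq:estimates for u star} for $u^\star$. The elementary inequality $\delta(x)\leq |z-x|$ valid for every $z\in\partial\Omega$ will be the workhorse.

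\textbf{Case $\gamma>s-\tfrac12$.} Here $u^\star\asymp\delta^{2s-\gamma-1}$, so
\begin{equation*}
\frac{\Gnu(z,x)}{u^\star(x)}\;\asymp\;\frac{\delta(x)^{2\gamma-2s+1}}{|z-x|^{n-2s+2\gamma}}
\;\leq\;\frac{|z-x|^{2\gamma-2s+1}}{|z-x|^{n-2s+2\gamma}}\;=\;|z-x|^{1-n},
\end{equation*}
where we used $2\gamma-2s+1>0$. Then $\Phi(z)\leq C\int_\Omega |z-x|^{1-n}dx\leq C'$ uniformly in $z\in\partial\Omega$ by direct integration in polar coordinates centered at $z$.

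\textbf{Case $\gamma=s-\tfrac12$.} Here $n-2s+2\gamma=n-1$ and $u^\star\asymp \delta^\gamma(1+|\!\ln\delta|)$, so
\begin{equation*}
\frac{\Gnu(z,x)}{u^\star(x)}\;\asymp\;\frac{1}{(1+|\!\ln\delta(x)|)\,|z-x|^{n-1}}\;\leq\;|z-x|^{1-n},
\end{equation*}
and the previous argument gives $\Phi(z)\leq C$.

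\textbf{Case $\gamma<s-\tfrac12$.} Here $u^\star\asymp\delta^\gamma$, so
\begin{equation*}
\frac{\Gnu(z,x)}{u^\star(x)}\;\asymp\;|z-x|^{2s-n-2\gamma},
\end{equation*}
and since $n-2s+2\gamma<n-1<n$, the integral over $\Omega$ is again bounded uniformly in $z\in\partial\Omega$.

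Combining the three regimes, $\|\Phi\|_{L^\infty(\partial\Omega)}\leq C$ and the claimed inequality follows. The only nontrivial technical point is the first case, where one must observe that the exponent $2\gamma-2s+1$ is positive precisely in that regime so that $\delta(x)\leq |z-x|$ can be applied; all remaining integrals are elementary and uniform in $z\in\partial\Omega$ thanks to the boundedness and $C^{1,1}$-regularity of $\Omega$.
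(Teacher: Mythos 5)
Your proof is correct and follows essentially the same route as the paper: Fubini reduces the claim to a uniform bound on $\int_\Omega \Gnu(z,\cdot)/u^\star$, which is then handled in the same three regimes using~\eqref{eq:bounds for Gnu},~\eqref{eq:estimates for u star}, and the inequality $\delta(x)\le|z-x|$. The only (harmless) deviation is in the case $\gamma=s-\frac12$, where you simply discard the logarithmic factor instead of comparing it with $1+\big|\!\ln|x-z|\big|$ as the paper does; both yield the same uniform bound.
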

\begin{proof}
	It holds
	\begin{align*}
	\int_\Omega \frac{|u(x)|}{u^\star(x)}\;dx \leq
	\int_{\partial\Omega} |h(z)| \int_\Omega \frac{ D_\gamma \mathbb G(z,x) }{ u^\star(x) } \; dx \; dz.
	\end{align*}
	In view of~\eqref{eq:bounds for Gnu} and~\eqref{eq:estimates for u star}, for~$z\in\partial\Omega$,
	\begin{align*}
	\int_\Omega \frac{ D_\gamma \mathbb G(z,x) }{ u^\star(x) } \; dx \asymp
	\left\lbrace\begin{aligned}
		& \int_\Omega \frac{ \delta(x)^{2\gamma-2s+1} }{ {|x-z|}^{n-2s+2\gamma} } \; dx  & & \text{if }\gamma>s-\frac12, \\
		& \int_\Omega \frac{ dx }{ {|x-z|}^{n-1} \, \big(1+|\!\ln\delta(x)|\big) }   & & \text{if }\gamma=s-\frac12, \\
		& \int_\Omega \frac{ dx }{ {|x-z|}^{n-2s+2\gamma} }   & & \text{if }\gamma<s-\frac12.
	\end{aligned}\right.
	\end{align*}
	
	When~$\gamma<s-1/2$ then~$2\gamma-2s<-1$, which implies
	\begin{align*}
	\int_\Omega \frac{ dx }{ {|x-z|}^{n-2s+2\gamma} } \asymp 1,
	\qquad \text{on }\partial\Omega.
	\end{align*}
	When~$\gamma\geq s-1/2$, it suffices to use relation~$\delta(x)\leq|x-z|$ for any~$x\in\Omega,z\in\partial\Omega$
	in order to deduce
	\begin{align*}
	\int_\Omega \frac{ \delta(x)^{2\gamma-2s+1} }{ {|x-z|}^{n-2s+2\gamma} } \; dx
	\leq \int_\Omega \frac{ dx }{ {|x-z|}^{n-1} } \asymp 1, \qquad \text{on }\partial\Omega,
	\end{align*}
	and
	\begin{align*}
	\int_\Omega \frac{ dx }{ {|x-z|}^{n-1} \, \big(1+|\!\ln\delta(x)|\big) } \leq
	\int_\Omega \frac{ dx }{ {|x-z|}^{n-1} \, \big(1+\big|\!\ln|x-z|\big|\big) } \asymp 1, \qquad \text{on }\partial\Omega.
	\end{align*}
\end{proof}

\begin{corollary}
Under the assumptions of Theorem~\ref{thm:well posedness of Martin problem},
\normalcolor
the solution operator to problem~\eqref{eq:nonhomogeneous Martin}
	\begin{equation}
		\label{eq:Martin functional spaces}
		\Martin: L^1 (\partial \Omega) \to L^\infty_{loc} (\Omega)
	\end{equation}
	is linear, continuous, and it admits the kernel representation
	\begin{equation}
		\label{eq:Martin kernel representation}
		\Martin (h ) (x) = \int_ {\partial \Omega} \M (x,z) \, h(z) \; dz,
		\qquad x\in\Omega,
	\end{equation}
	where~$\M$ is given by
	\begin{equation}
		\label{eq:Martin kernel from Gnu}
		\M(x,z) = \Gnu (z,x).
	\end{equation}
	Furthermore,
	for any~$\alpha>(\gamma-2s)\vee(-\gamma-1)$
	\begin{equation}
		\label{eq:Martin functional spaces L1}
		\Martin: L^1 (\partial \Omega) \to L^1 (\Omega, \delta^\alpha) \textrm{ is continuous}.
	\end{equation}
	with operator norm
	\begin{equation*}
		\| \Martin \|_{L^1 (\partial \Omega ) ; L^1 (\Omega,\delta^\alpha)} \le \| \Green \|_{L^1 (\Omega, \delta^\gamma ) ; L^1 (\Omega,\delta^\alpha)}.
	\end{equation*}
\end{corollary}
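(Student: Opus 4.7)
The linearity of $\Martin$ is an immediate consequence of the uniqueness statement in Theorem~\ref{thm:well posedness of Martin problem}: given $h_1, h_2 \in L^1(\partial\Omega)$ and $\lambda_1, \lambda_2 \in \re$, the linear combination $\lambda_1 \Martin(h_1) + \lambda_2 \Martin(h_2)$ solves~\eqref{eq:nonhomogeneous Martin} with datum $\lambda_1 h_1 + \lambda_2 h_2$, hence must coincide with $\Martin(\lambda_1 h_1 + \lambda_2 h_2)$. The continuity in~\eqref{eq:Martin functional spaces} is precisely~\eqref{martin estimate 2}, and the kernel representation~\eqref{eq:Martin kernel representation}--\eqref{eq:Martin kernel from Gnu} is a restatement of~\eqref{eq:solution Martin kernel G nu}, together with the definition $\Gnu(z,y) = D_\gamma(\G(\cdot,y))(z)$.

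The non-trivial part is the weighted estimate~\eqref{eq:Martin functional spaces L1}. The plan is to revisit the approximating sequence $(f_j)_{j\in\N}$ built in the proof of Theorem~\ref{thm:well posedness of Martin problem} for $0 \le h \in C(\partial\Omega)$, namely $f_j = H\,|\partial\Omega|\chi_{A_j}/(|A_j|\delta^\gamma)$ with $H(x) = h(z(x))$. A direct computation in the tubular coordinates of Remark~\ref{rmk:tubular}, using that the Jacobian of $\Phi$ tends uniformly to $1$ as the normal variable vanishes, gives
\begin{equation*}
\|f_j\,\delta^\gamma\|_{L^1(\Omega)} = \frac{|\partial\Omega|}{|A_j|}\int_{A_j} H \ \longrightarrow\ \|h\|_{L^1(\partial\Omega)}, \qquad j\uparrow\infty.
\end{equation*}
Setting $C_\alpha := \|\Green\|_{L^1(\Omega,\delta^\gamma); L^1(\Omega,\delta^\alpha)}$, Theorem~\ref{thm:precise weights} then yields
\begin{equation*}
\|\Green(f_j)\|_{L^1(\Omega,\delta^\alpha)} \le C_\alpha\,\|f_j\,\delta^\gamma\|_{L^1(\Omega)}.
\end{equation*}

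To conclude, I would pass to the limit in $j$. By~\eqref{martin limiting}, $\Green(f_j) \rightharpoonup \Martin(h)$ in $L^1_{loc}(\Omega)$; since $\delta^\alpha$ is bounded on any $K \Subset \Omega$, multiplication by $\delta^\alpha$ preserves weak convergence in $L^1(K)$, and the lower semicontinuity of the $L^1$ norm under weak convergence yields
\begin{equation*}
\int_K |\Martin(h)|\,\delta^\alpha \ \le\ \liminf_{j\uparrow\infty}\int_K |\Green(f_j)|\,\delta^\alpha \ \le\ C_\alpha\,\|h\|_{L^1(\partial\Omega)}.
\end{equation*}
Monotone convergence as $K \uparrow \Omega$ gives the estimate for nonnegative continuous $h$, and the extension to arbitrary $h \in L^1(\partial\Omega)$ follows by density of $C(\partial\Omega)$ in $L^1(\partial\Omega)$, the already established linearity of $\Martin$, and the decomposition $h = h_+ - h_-$. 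The main technical obstacle is the tubular-coordinate limit of $\|f_j\,\delta^\gamma\|_{L^1(\Omega)}$, but this is routine under the assumed $C^{1,1}$ regularity of $\partial\Omega$.
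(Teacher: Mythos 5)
Your proposal is correct and follows essentially the same route as the paper: the first three claims are read off from Theorem~\ref{thm:well posedness of Martin problem}, and the weighted estimate~\eqref{eq:Martin functional spaces L1} is obtained by applying Theorem~\ref{thm:precise weights} to the approximating sequence $f_j$, passing to the limit on compact sets via the weak $L^1_{loc}$ convergence, exhausting $\Omega$, and then extending by density and the decomposition $h=h_+-h_-$. Your only (harmless) refinement is observing that $\|f_j\delta^\gamma\|_{L^1(\Omega)}$ converges to, rather than equals, $\|h\|_{L^1(\partial\Omega)}$, which the $\liminf$ argument absorbs.
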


\begin{proof}
	The results~\eqref{eq:Martin functional spaces},~\eqref{eq:Martin kernel representation}, and~\eqref{eq:Martin kernel from Gnu} follow immediately from Theorem~\ref{thm:well posedness of Martin problem}.

	Now, let us prove~\eqref{eq:Martin functional spaces L1}. First, let~$0 \le h \in C(\partial \Omega)$. By recalling the construction in Theorem~\ref{thm:well posedness of Martin problem}, there is a sequence~$f_j \ge 0$ such that~$0 \le u_j = \Green(f_j)$ with~$\| f_j \delta^ \gamma \|_{L^1 (\Omega)} = \| h \|_{L^1 (\partial \Omega)}$ such that~$u_j \rightharpoonup u$ in~$L^1_{loc} (\Omega)$. Going back to the proof of Theorem~\ref{thm:precise weights} we observe that,
	\begin{equation*}
		\int_{ \Omega } u_j \delta^\alpha  \le
		C_\Green \int_\Omega f_j  \, \delta^\gamma = C_\Green \int_{\partial \Omega} h,
	\end{equation*}
	where~$C_\Green := \| \Green \|_{L^1 (\Omega, \delta^\gamma ) ; L^1 (\Omega,\delta^\alpha)}$.
	Since we only have convergence over compact sets, we assure that, for any~$K \Subset \Omega$,
	\begin{equation*}
		\int_{ K  } u_j \delta^\alpha \le  C_\Green \int_{\partial \Omega} h.
	\end{equation*}
	Since~$\chi_{K} \delta^\alpha \in L^\infty_c(\Omega)$ and~$u_j$ converges weakly in~$L^1_{loc}(\Omega)$
	\begin{equation*}
		\int_K u\delta^\alpha = \lim_{j \to +\infty} \int_{ K  } u_j \delta^\alpha \le  C_\Green \int_{\partial \Omega} h.
	\end{equation*}
	Since this holds for any~$K \Subset \Omega$ and~$C_\Green$ does not depend on~$K$, then~$u \delta^\alpha \in L^1 (\Omega)$ and
	\begin{equation}
		\label{eq:Martin alpha L1 continuity constant}
		\int_\Omega u\delta^\alpha \le  C_\Green  \int_{\partial \Omega} h.
	\end{equation}
	If~$0 \le h \in L^1 (\partial \Omega)$, we can construct an approximating sequence~$0 \le h_j \in C(\partial \Omega)$ and we recover~\eqref{eq:Martin alpha L1 continuity constant} by passing to the limit.
	
	If~$h \in L^1 (\partial \Omega)$ is sign-changing, we repeat the argument for~$h_+$ and~$h_-$ and apply~\eqref{eq:Martin alpha L1 continuity constant} to deduce
	\begin{equation*}
		\int_\Omega |u|\delta^\alpha \le  C_\Green \int_{\partial \Omega} |h|.
	\end{equation*}
	This completes the proof.
\end{proof}

\begin{remark}
	Let~$u = \Martin (h)$. Notice that, since~$2s - \gamma - 1 < 0$, ~\eqref{eq:Martin functional spaces L1} shows that~$u \delta^{\ee + \gamma - 2s} \in L^1 (\Omega)$ for any~$\ee > 0$. This is sharper than Corollary~\ref{cor: u / u star in L1}, which only guarantees that~$u  \delta^{1 + \gamma - 2s} \asymp u /u^* \in L^1 (\Omega)$.
\end{remark}

\begin{remark}
	\label{lem:boundary behaviour of martin}

	Due to the estimates for~$\Gnu$, we know that
	\begin{equation*}
	\M(x,z) \asymp \frac{\delta(x)^\gamma}{|x-z|^{n+\gamma-(2s-\gamma)}} \qquad x\in\Omega,z\in\partial\Omega.
	\end{equation*}

\end{remark}

\begin{remark}
	In the classical case, this corresponds to the usual Poisson kernel. For~$\Ls = (-\Delta)^s_\RFL$, this somehow corresponds to the existing notion of Martin kernel (see~\cite{Abatangelo2015,bogdan99representation}).
\end{remark}

\begin{remark}
	Notice that
	\begin{equation*}
	\left| \dfrac{\M(x,z)}{\int_{ \partial \Omega }  \M (x,z') dz'} \right| \le C \frac{ \delta(x)^{2\gamma+1-2s} }{ {|x-z|}^{n-2s+2\gamma} }
	\qquad x\in\Omega,z\in\partial\Omega.
	\end{equation*}
\end{remark}

\subsection{Boundary behaviour of solutions of the \texorpdfstring{$\Ls$}{L}-harmonic problem}

\subsubsection{Bounded data}
\begin{theorem}
	\label{thm:Martin classical traces}	Let us assume~\eqref{eq:K0}--\eqref{eq:K3} and~\eqref{eq:defn G nu}. Let~$h \in C(\partial\Omega)$ and~$\gamma>s-\frac12$.
	Then, the unique solution
		$u\in L^1_{loc} (\Omega)$
	of~\eqref{eq:nonhomogeneous Martin} satisfies
	\begin{align*}
	\lim_{ \substack{ x \to \theta \\ x \in \Omega} } \frac{ u (x) }{u^\star(x) } = h(\theta)
	\end{align*}
	uniformly in~$\theta\in\partial\Omega$.
\end{theorem}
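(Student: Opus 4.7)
The plan is to recast the ratio $u(x)/u^\star(x)$ as an averaging against a family of probability densities on $\partial\Omega$ that concentrate at $\theta$ as $x\to\theta$, so the conclusion reduces to a standard approximate-identity argument. Using the kernel representations~\eqref{eq:solution Martin kernel G nu} and~\eqref{eq:repr u star}, I will set
\begin{equation*}
p_x(z) := \frac{\Gnu(z,x)}{u^\star(x)}, \qquad z\in\partial\Omega,
\end{equation*}
which is nonnegative and satisfies $\int_{\partial\Omega} p_x(z)\,dz = 1$. Then
\begin{equation*}
\frac{u(x)}{u^\star(x)} - h(\theta) = \int_{\partial\Omega} \bigl(h(z)-h(\theta)\bigr)\,p_x(z)\,dz,
\end{equation*}
and the task becomes showing that $p_x$ concentrates at $\theta$.

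Given $\varepsilon>0$, uniform continuity of $h$ on $\partial\Omega$ provides $\eta>0$ (independent of $\theta$) such that $|h(z)-h(\theta)|<\varepsilon$ whenever $|z-\theta|<\eta$. I would split the integral at $|z-\theta|=\eta$. The near piece is bounded by $\varepsilon \int p_x \le \varepsilon$ because $p_x$ is a probability density. For the far piece I will use the two-sided bounds
\begin{equation*}
\Gnu(z,x)\asymp \frac{\delta(x)^\gamma}{|x-z|^{n-2s+2\gamma}},
\qquad
u^\star(x)\asymp \delta(x)^{2s-\gamma-1}
\end{equation*}
(from~\eqref{eq:bounds for Gnu} and~\eqref{eq:estimates for u star}, the latter valid precisely because $\gamma>s-\tfrac12$) to obtain
\begin{equation*}
p_x(z) \le C\,\frac{\delta(x)^{2\gamma-2s+1}}{|x-z|^{n-2s+2\gamma}}.
\end{equation*}
For $x$ sufficiently close to $\theta$, $|x-z|\gtrsim |z-\theta|$ on the far region, so
\begin{equation*}
\int_{|z-\theta|\ge\eta} p_x(z)\,dz
\le C\,\delta(x)^{2\gamma-2s+1} \int_{\partial\Omega\cap\{|z-\theta|\ge\eta\}} \frac{dz}{|z-\theta|^{n-2s+2\gamma}}.
\end{equation*}
The remaining surface integral is finite (as a proper integral over a bounded $(n-1)$-dimensional set with the singularity at $\theta$ excised), with a bound depending only on $\eta$, not on $\theta$. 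Since $2\gamma-2s+1>0$, the prefactor $\delta(x)^{2\gamma-2s+1}\to 0$ as $x\to\theta$, uniformly in $\theta\in\partial\Omega$.

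Combining the two pieces yields $\limsup_{x\to\theta}|u(x)/u^\star(x)-h(\theta)|\le \varepsilon + 2\|h\|_{L^\infty}\cdot 0=\varepsilon$, and since $\varepsilon$ was arbitrary and all constants are independent of $\theta$, the convergence is uniform. The main obstacle is really only the book-keeping in the far piece: one must check that the powers match so that the probability-density structure compensates the singular weight of $u^\star$, and this is precisely where the hypothesis $\gamma>s-\tfrac12$ enters through $2\gamma-2s+1>0$; in the borderline or opposite regime $u^\star$ scales like $\delta^\gamma$ and the density does not concentrate, which is why the theorem is stated in this range.
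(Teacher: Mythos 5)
Your proposal is correct and follows essentially the same route as the paper: the paper likewise writes $u/u^\star - h(\theta)$ as an integral of $h(z)-h(\theta)$ against the normalized kernel $\M(x,z)/\int_{\partial\Omega}\M(x,z')\,dz'$ (which is exactly your $p_x$), splits at $|z-\theta|=\eta$ via uniform continuity, and kills the far piece with the bound $\delta(x)^{2\gamma-2s+1}|x-z|^{-(n-2s+2\gamma)}$ and the sign condition $2\gamma-2s+1>0$. Your use of the exact normalization $\int_{\partial\Omega}p_x=1$ from~\eqref{eq:repr u star} for the near piece is a minor (and slightly cleaner) variant of the paper's re-use of the quantitative kernel estimate there.
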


\begin{proof}
	We estimate, for~$x \in \Omega$ and up to multiplicative constants,
	\begin{multline*}
	\left|\frac{u(x)}{u^\star(x)} -h(\theta)\right| = \left| \frac{\int_ {\partial \Omega} \M(x,z) h(z) \, dz}{\int_{ \partial \Omega}  \M(x,z') \, dz' } - \frac{\int_ {\partial \Omega} \M(x,z) h(\theta)\, dz}{\int_ {\partial \Omega} \M(x,z') \, dz'} \right| = \\
	= \left|  \int_{ \partial \Omega } \frac{\M(x,z)}{\int_{ \partial \Omega }  \M (x,z') dz'} (h(z) - h(\theta)) \; dz    \right|
	\le \delta(x)^{-2s+2\gamma+1}  \int_{\partial\Omega} \frac{|h(z)-h(\theta)|}{|x-z|^{n+2\gamma-2s}} \; dz,
	\end{multline*}
	where we have used Lemma~\ref{lem:boundary behaviour of martin}.
	Fix now~$\ee>0$ arbitrarily small and let~$\eta>0$ small enough in order to have~$|h(z)-h(\theta)|<\ee$ for any~$z\in\partial\Omega\cap B(\theta,\eta)$. Note that, since~$\partial\Omega$ is compact and~$h$ is continuous, then~$\eta$ is independent of~$\theta$ by uniform continuity. Then we have
	\begin{multline*}
	\delta(x)^{-2s+2\gamma+1}  \int_{\partial\Omega\cap B(\theta,\eta)} \frac{|h(z)-h(\theta)|}{|x-z|^{n+2\gamma-2s}} \; dz \ \leq \\
	\leq\ \delta(x)^{-2s+2\gamma+1}  \int_{\partial\Omega} \frac{\ee}{|x-z|^{n+2\gamma-2s}} \; dz \leq \ee
	\end{multline*}
	and
	\begin{multline*}
	\delta(x)^{-2s+2\gamma+1}  \int_{\partial\Omega\setminus B(\theta,\eta)} \frac{|h(z)-h(\theta)|}{|x-z|^{n+2\gamma-2s}} \; dz 	\ \leq \\
	\leq\ \ee^{-n-2\gamma+2s}\|h\|_{L^\infty(\partial\Omega)}\delta(x)^{-2s+2\gamma+1}\ \longrightarrow\ 0
	\end{multline*}
	as~$x\to\theta$. The above yields that
	\begin{align*}
	\limsup_{x\to\theta}\left|\frac{u(x)}{u^\star(x)} -h(\theta)\right|\leq\ee,
	\end{align*}
	but, since~$\ee$ is arbitrary, our claim is proved.
\end{proof}

\begin{remark}
	The reasoning above also holds for~$\gamma=s-\frac12$ under suitable modifications.
	For~$\gamma<s-\frac12$ the integration
	\[
	\int_{\partial\Omega} |\theta-z|^{-n-2\gamma+2s} h(z) dz
	\]
	makes perfect sense for~$\theta\in\partial\Omega$, so no normalization will ever be able to improve
	\[
	\lim_{x\to\theta} \delta(x)^{-\gamma} u(x) \asymp \int_{\partial\Omega} |\theta-z|^{-n-2\gamma+2s} h(z) dz.
	\]
	Heuristically, it seems like that the Martin kernel is \textit{not} singular enough to
	select only the values of~$h$ around~$\theta$ when passing to the limit.
	The kernel seems to be too ``spread around''.
\end{remark}

\subsubsection{Integrable data}

\begin{theorem}
	\label{thm:Martin boundary behaviour integrable data}
	Let~$h \in L^1 (\partial \Omega)$ and~$\gamma\geq s-\frac12$. Then, for any~$\phi\in C(\overline{\Omega})$, it holds
	\begin{equation*}
		\frac1\eta\int_{ \{  \delta < \eta  \} } \frac{\Martin(h)}{u^\star}\,\phi
		\ \longrightarrow\ \int_{\partial\Omega} h\,\phi
		\qquad\text{as }\eta\downarrow 0.
	\end{equation*}
\end{theorem}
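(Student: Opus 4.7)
The proof will proceed by reducing $L^1$ data to continuous data via a density argument, for which the crucial ingredient is a uniform-in-$\eta$ continuity estimate on the averaged operator.

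\textbf{Step 1 (Continuous case).} First I would prove the statement assuming $h\in C(\partial\Omega)$. Theorem~\ref{thm:Martin classical traces} (and its modification for $\gamma=s-\tfrac12$ indicated in the subsequent remark) says that $\Martin(h)(x)/u^\star(x)\to h(\theta)$ as $x\to\theta\in\partial\Omega$ \emph{uniformly} in $\theta$. Using the tubular parametrization of Remark~\ref{rmk:tubular}, for $\eta<\ee$,
\begin{equation*}
\frac1\eta\int_{\{\delta<\eta\}}\frac{\Martin(h)}{u^\star}\,\phi\,dx=\int_{\partial\Omega}\frac1\eta\int_0^\eta \frac{\Martin(h)(z+\rho\mathbf n(z))}{u^\star(z+\rho\mathbf n(z))}\,\phi(z+\rho\mathbf n(z))\,J(z,\rho)\,d\rho\,d\sigma(z),
\end{equation*}
where $J(z,\rho)\to 1$ as $\rho\downarrow 0$. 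For fixed $z$, the inner average converges to $h(z)\phi(z)$ by the trace result and continuity of $\phi$; moreover the integrand is uniformly bounded by $\|\Martin(h)/u^\star\|_{L^\infty}\|\phi\|_\infty<\infty$. Dominated convergence closes this step.

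\textbf{Step 2 (Uniform bound on $L^1(\partial\Omega)$).} Next I would prove that there is $C>0$ such that, for every $g\in L^1(\partial\Omega)$, every $\phi\in C(\overline\Omega)$ and every $\eta\in(0,\ee)$,
\begin{equation*}
\left|\frac1\eta\int_{\{\delta<\eta\}}\frac{\Martin(g)}{u^\star}\,\phi\,dx\right|\leq C\,\|\phi\|_\infty\,\|g\|_{L^1(\partial\Omega)}.
\end{equation*}
Since $|\Martin(g)|\leq \Martin(|g|)$, it suffices to handle $g\geq 0$; by Fubini,
\begin{equation*}
\frac1\eta\int_{\{\delta<\eta\}}\frac{\Martin(g)}{u^\star}\,dx=\int_{\partial\Omega}g(z)\,F_\eta(z)\,d\sigma(z),\qquad F_\eta(z):=\frac1\eta\int_{\{\delta<\eta\}}\frac{\M(x,z)}{u^\star(x)}\,dx.
\end{equation*}
The reduction is complete once I establish $\sup_{z\in\partial\Omega,\,\eta\in(0,\ee)}F_\eta(z)<+\infty$. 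This I would do by plugging in $\M(x,z)\asymp\delta(x)^\gamma|x-z|^{-(n-2s+2\gamma)}$ together with the asymptotics for $u^\star$ from~\eqref{eq:estimates for u star}, switching to tubular coordinates $x=z'+\rho\mathbf n(z')$, and performing a rescaling $\rho=\eta\tau$ in the radial variable. The resulting integral reduces to $\int_0^\infty v^{n-2}(v+1)^{-(n-2s+2\gamma)}dv<\infty$ (convergent at infinity precisely because $\gamma>s-\tfrac12$), with the critical case $\gamma=s-\tfrac12$ absorbing the logarithmic factor of $u^\star$.

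\textbf{Step 3 (Density).} Finally, for $h\in L^1(\partial\Omega)$ I would choose ${(h_k)}_{k\in\N}\subset C(\partial\Omega)$ with $h_k\to h$ in $L^1(\partial\Omega)$ and split $\Martin(h)=\Martin(h_k)+\Martin(h-h_k)$. Applying Step~1 to $h_k$ and Step~2 to $h-h_k$,
\begin{equation*}
\limsup_{\eta\downarrow 0}\left|\frac1\eta\int_{\{\delta<\eta\}}\frac{\Martin(h)}{u^\star}\,\phi\,dx-\int_{\partial\Omega}h\,\phi\right|\leq (C+1)\,\|\phi\|_\infty\,\|h-h_k\|_{L^1(\partial\Omega)},
\end{equation*}
and letting $k\uparrow\infty$ concludes the proof.

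The main obstacle is the uniform bound in Step~2, specifically the integral computation establishing $F_\eta(z)\leq C$ independently of $z\in\partial\Omega$ and $\eta\in(0,\ee)$; this is where the hypothesis $\gamma\geq s-\tfrac12$ is sharply used, and the critical case requires keeping track of the logarithmic correction to $u^\star$ with some care.
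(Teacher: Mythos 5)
Your proposal is correct and follows essentially the same route as the paper: reduce to continuous data via Theorem~\ref{thm:Martin classical traces}, then obtain the uniform-in-$\eta$ bound $\sup_z \eta^{-1}\int_{\{\delta<\eta\}}\M(x,z)/u^\star(x)\,dx\le C$ by Fubini and the kernel estimates, and conclude by density. The only cosmetic difference is that the paper performs the key integral via the coarea formula while you propose tubular coordinates with a rescaling; these are the same computation, and both hinge on $\gamma\ge s-\tfrac12$ exactly as you indicate.
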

\begin{proof}
	Notice that the claim holds for~$h\in C(\partial\Omega)$ by Theorem~\ref{thm:Martin classical traces}.
	For a general~$h\in L^1(\partial\Omega)$, let us consider a sequence~${(h_k)}_{k\in\mathbb{N}}\subset C(\partial\Omega)$ such that
	$\|h_k-h\|_{L^1(\partial\Omega)}\downarrow 0$ as~$k\uparrow\infty$. Then split
	\begin{multline*}
	\left|\eta^{-1} \int_{ \{  \delta < \eta  \} } \frac{\Martin(h)}{u^\star}\,\phi - \int_{\partial\Omega} h\,\phi \right| \leq
		\left| \eta^{-1} \int_{ \{  \delta < \eta  \} } \frac{\Martin(h)-\Martin(h_k)}{u^\star}\,\phi \right| + \\
		+\left| \eta^{-1} \int_{ \{  \delta < \eta  \} } \frac{\Martin(h_k)}{u^\star}\,\phi - \int_{\partial\Omega} h_k\,\phi \right|
		+\left| \int_{\partial\Omega} (h_k-h)\,\phi \right|.
	\end{multline*}
	Fix~$\ee>0$ arbitrarily small and let~$k\in\mathbb{N}$ large enough to have~$\|h_k-h\|_{L^1(\partial\Omega)}<\ee$.
	The above inequality and Theorem~\ref{thm:Martin classical traces} entail
	\begin{align*}
	\limsup_{\eta\downarrow0}\left|\eta^{-1} \int_{ \{  \delta < \eta  \} } \frac{\Martin(h)}{u^\star}\,\phi - \int_{\partial\Omega} h\,\phi \right| \leq
		\limsup_{\eta\downarrow0}\left| \eta^{-1} \int_{ \{  \delta < \eta  \} } \frac{\Martin(h)-\Martin(h_k)}{u^\star}\,\phi \right| + \ee,
	\end{align*}
	for any~$k\in\mathbb{N}$ large enough. Write
	\begin{align*}
	& \eta^{-1} \int_{ \{  \delta < \eta  \} } \frac{\Martin(h)-\Martin(h_k)}{u^\star}\,\phi \ = \\
	& =\ \eta^{-1} \int_{ \{  \delta < \eta  \} } \frac{\phi(x)}{u^\star(x)}\int_{\partial\Omega}\M(x,z)\,\big(h_k(z)-h(z)\big)\;dz\;dx\ = \\
	& =\ \int_{\partial\Omega} \big(h_k(z)-h(z)\big) \eta^{-1} \int_{ \{  \delta < \eta  \} } \M(x,z)\,\frac{\phi(x)}{u^\star(x)}\;dx\;dz ,
	\end{align*}
	in order to deduce that, up to constants, it holds
	\begin{align*}
	&	\left|\eta^{-1} \int_{ \{  \delta < \eta  \} } \frac{\Martin(h)-\Martin(h_k)}{u^\star}\,\phi\right| \ \leq \\
	& 	\leq \|\phi\|_{L^\infty(\Omega)}\|h_k-h\|_{L^1(\partial\Omega)}\sup_{z\in\partial\Omega}
	\eta^{-1} \int_{ \{  \delta < \eta  \} } \frac{\M(x,z)}{u^\star(x)}\;dx \\
	&	\leq \|\phi\|_{L^\infty(\Omega)}\|h_k-h\|_{L^1(\partial\Omega)}\sup_{z\in\partial\Omega}
	\eta^{-1} \int_{ \{  \delta < \eta  \} } \frac{\delta(x)^{-2s+2\gamma+1}}{|x-z|^{n+2\gamma-2s}}\;dx .
	\end{align*}
	By the coarea formula it holds
	\begin{multline*}
	\int_{ \{  \delta < \eta  \} } \frac{\delta(x)^{-2s+2\gamma+1}}{|x-z|^{n+2\gamma-2s}}\;dx
	=\int_0^\eta t^{-2s+2\gamma+1} \int_{\{x:\delta(x)=t\}} |x-z|^{-n-2\gamma+2s} \; dx\ = \\
	=\ \int_0^\eta t^{-2s+2\gamma+1} \, t^{-1-2\gamma+2s} \; dt = \eta
	\end{multline*}
	and therefore
	\begin{align*}
	\left|\eta^{-1} \int_{ \{  \delta < \eta  \} } \frac{\Martin(h)-\Martin(h_k)}{u^\star}\,\phi\right|\leq
	\ee\|\phi\|_{L^\infty(\Omega)}.
	\end{align*}
	The case~$\gamma=s-\frac 1 2$ follows by a similar argument.
\end{proof}


\section{Comments and open problems}
\label{sec:comments}

\begin{enumerate}

	\item The case {$2s-\gamma -1 > \gamma$} (\textit{i.e.},~$\gamma < s - \frac 1 2$) seems to pose problems to uniqueness.
Indeed in this case
\begin{equation*}
	M(1) \asymp \delta^{(2s-\gamma -1) \wedge \gamma} \longrightarrow 0\
	\qquad \text{as }x\to\partial\Omega.
\end{equation*}
It seems that problem
\begin{equation*}
	\begin{dcases}
		\Ls u = f & \text{in }\Omega \\
		u = 0 & \text{on }\partial \Omega
	\end{dcases}
\end{equation*}
does not have a unique solution, as~$\Green(f) + \Martin(h)$ is also a solution for any~$h \in C(\partial \Omega)$. Therefore, the construction of the Green operator assumed at the beginning (which chooses a single solution), seems to be made by applying some additional \emph{selection criteria}.
	This phenomenon should be studied.

	\item\label{sec:system RFLs} In trying to construct an example satisfying~$\gamma < s - \frac 1 2$ relation, we have considered the following example:
		let~$f \in L^\infty_c (\Omega)$ and consider the system
		\begin{equation*}
		\left\lbrace\begin{aligned}
			(-\Delta)^{\frac s k}_\RFL v_1 &= f & & \text{in }\Omega \\
			(-\Delta)^{\frac s k}_\RFL v_1 &= v_2 & & \text{in }\Omega \\
			& \ \vdots \\
			(-\Delta)^{\frac s k}_\RFL v_{k-1} &= v_{k-2} & & \text{in }\Omega \\
			(-\Delta)^{\frac s k}_\RFL u &= v_{k-1} & & \text{in } \Omega \\
			v_1 = \ldots = v_{k-1}= u &= 0 & & \text{in }\mathbb R^n\setminus\overline\Omega.
		\end{aligned}\right.
		\end{equation*}
		Then
		\begin{equation*}
		u = \Green (f) = \Green_{{\frac s k}} \circ \ldots \circ \Green_{\frac s k} (f)
		\end{equation*}
		where~$\Green_{\frac s k}$ is the Green operator of~$(-\Delta)_\RFL^{\frac s k}$.
		It seems that~$\Green$ is self-adjoint and, for~$1/2 < s < 1$, we expect its kernel to be of the form
		\begin{equation*}
		\G (x,y) \asymp |x-y|^{2s-n} \left( \frac{\delta(x) \delta(y)}{|x-y|^2} \wedge 1 \right)^{\frac s k}.
		\end{equation*}
		
	\item The operators that admit exterior data~$u = g$ in~$\mathbb R^n\setminus\overline\Omega$ (\textit{e.g.}, the RFL) have an exterior kernel, that is sometimes denoted by~$\mathbb P(x,y)$,~$x \in \Omega, y \in \mathbb R^n\setminus\overline\Omega$ (in the case of the RFL it holds that~$\mathbb P(x,y) = -(-\Delta)^s_y \G(x,y)$). It seems reasonable that the singular solutions of type~$u^\star$ can also be detected from the outside, as it has been done for instance in~\cite[Lemma 7]{bogdan99representation} and~\cite[Lemma 3.6]{Abatangelo2015}.
	
	\item
	Notice that, so far, we have given all our estimates in terms of~$u / u^\star$. However, it would nice to give an operator~$\widehat\Martin$ such that
	\begin{equation*}
		\lim_{x \to z} \frac{\widehat \Martin (h) (x)}{\delta(x)^{2s-\gamma - 1}} = h(z) , \qquad z\in\partial\Omega.
	\end{equation*}
	Nevertheless, the boundary behaviour of~$u^\star$ is only known in terms of rate. An interesting question is if the following limit is defined
	\begin{equation*}
		K(z) = \lim_{x \to z} \frac{u^\star (x)}{\delta(x)^{2s-\gamma - 1}} , \qquad z\in\partial\Omega.
	\end{equation*}
	This seems to be a further assumption on the kernel.
	If it is, then~$K \asymp 1$, we can set
	\begin{equation*}
		\widehat \Martin (h) = \Martin \left( \frac{h}{K}  \right)
	\end{equation*}
	so that
	\begin{equation*}
		\lim_{x \to z} \frac{\widehat \Martin (h) (x)}{\delta(x)^{2s-\gamma - 1}} = \lim_{x \to z} \frac{ \Martin (h / K) (x)}{u^\star(x)} \frac{u^\star(x)}{\delta(x)^{2s-\gamma - 1}} = \frac{h(z)}{K(z)} K(z) = h(z)
	\end{equation*}

	\item \label{it:singular solution from differentiation}
	In the case of the RFL, the existence of solutions which are singular at boundary  can be obtained by taking the derivative of regular solutions. In Appendix \ref{appendix:Ros-Oton} we include an account of how positive singular solutions can be obtained, which was explained to us by Ros-Oton. It is based on a very interesting formula of chain-rule type.
	
	However, this argument does not seem to apply in general. In particular, it could fail in those examples where the commutation with the derivative does not hold, so that the singular rates cannot be predicted by such means. For the SFL it is easy to see that we cannot repeat the reasoning: in one dimension (say~$\Omega=(-1,1)$), we take the first eigenfunction for the SFL~$u(x) = \cos \frac {\pi x} 2$. Then, all derivatives are bounded functions and no singularity appears. However, we have shown that the blow-up rate of the critical solution is~$\delta^{2(s-1)}$.

	\item It is interesting to point out that, when~$f \asymp \delta^{-2s}$ (and it is admissible in the sense of~\eqref{eq:admissible class introduction}, \textit{i.e.},~$2s < \gamma +1$), then our main result Theorem~\ref{thm:range of exponents} says that~$\Green(f) \asymp 1$.
	
	Given~$g \in L^\infty( \partial \Omega ) $ it is therefore natural to ask whether there exists a function~$f$ such that~$\Green(f)(x) \to g(z)$ as~$x \to z \in \partial\Omega$.
	
	This would amount to studying whether the non-homogeneous Dirichlet problem
	\begin{equation*}
		\begin{dcases}
			\Ls u = f & \text{in } \Omega \\
			u = g & \text{on } \partial \Omega \\
			u = 0 & \text{in } \mathbb R^n\setminus\overline\Omega \text{ (if applicable) }
		\end{dcases}
	\end{equation*}
	has solutions for~$g$ bounded. When~$2s - \gamma - 1 < 0$ the solution of such problem will satisfy~$\lim_{x \to \partial \Omega} u / u^\star = 0$ on $\partial \Omega$. This would indicate that~$u = \Green (f)$. If such~$u$ exists, it will never be unique since, taking~$f_2 \in C^\infty_c(\Omega)$, $\hat u = \Green (f + f_2) = u + G(f_2)$ will also go to~$g$ at the boundary.
	Hence, it seems that, for  operators~$\Ls$ with a Green kernel satisfying~$2s - \gamma - 1 < 0$, $f$ and~$g$ cannot be chosen independently. In fact, for compactly supported~$f$ the only possible bounded~$g$ is zero, in complete contrast to the problem for the classical Laplacian.

 	In many cases this inverse task of finding one or several~$f$ given~$g$ turns out to be simple. If, for instance, the direct operator~$\Ls$ is given by a singular integral, then given some bounded smooth boundary data~$g$ we can extend them to the interior of~$\Omega$ as a smooth function~$\widetilde g$, and by zero outside. Then, we can take~$u=\widetilde g$ and compute~$f=\Ls u$ in~$\Omega$ explicitly, for~$\Omega$ of class~$C^{1,1}$. This construction is particularly enlightening when~$g = 1$. The natural extension to the inside is
 		\begin{equation*}
 			u (x) = \widetilde g (x)
 			=
 			\begin{dcases}
 				1 & \text{in } x\in \overline \Omega, \\
 				0 & \text{in } x \in \mathbb R^n\setminus\overline\Omega.
 			\end{dcases}
 		\end{equation*}
 		When $\Ls$ is the RFL, the computation yields, for $x \in \Omega$,
		\begin{multline*}
			f_\RFL(x)
			= (-\Delta)_\RFL^s u(x) \ =\\
			= c_{n,s} \, \pv \int_\Omega \frac{1-1}{|x-y|^{n+2s}} \; dy + c_{n,s}  \int_{\mathbb R^n\setminus\overline\Omega}  \frac{1-0}{|x-y|^{n+2s}} \; dy
			\asymp 	\delta(x)^{-2s}.
		\end{multline*}
		The last computation is a simple although technical exercise.
		Notice that~$f$ is in~$L^1 (\Omega)$ if~$s<1/2$ and in~$L^2(\Omega)$ if~ $s<1/4$.
		
		For the SFL we use the kernel representation and deduce, for $x \in \Omega$,
		\begin{align*}
			f_\SFL(x) = (-\Delta)_\SFL^s u(x) =
			\pv\int_\Omega (1 - 1) \, J(x,y) \; dy + \kappa (x)
			=\kappa (x) \asymp \delta(x)^{-2s},
		\end{align*}
		\textit{cf.}~\eqref{spectral estimates}.
		Curiously, in the CFL (which satisfies~$2s-\gamma - 1 = 0$) case, the~$\Ls$-harmonic problem has~$u^\star \asymp 1$ and we get the non-homogeneous Dirichlet problem.  Thus for~$u=1$ one trivially has for all~$x \in \Omega$,
		\begin{equation*}
			f_\CFL (x) = (-\Delta)_\CFL^s u(x) = c_{n,s} \, \pv \int_\Omega \frac{(1-1)}{|x-y|^{n+2s}} dy = 0.
		\end{equation*}
		This shows, in particular, that~$u  = 1$ is CFL-harmonic.
		\end{enumerate}




\appendix

\section{Derivation of an explicit singular solution by X. Ros-Oton}
\label{appendix:Ros-Oton}
This section is the result of conversations  with Prof.\ X.\ Ros-Oton who had indicated to the authors that, at least in the case of the restricted fractional Laplacian (RFL), some singular solutions could be obtained by differentiation of the continuous solutions of the standard theory as developed in \cite{Ros-Oton2014b}, 
see also comment number~\ref{it:singular solution from differentiation} in Section~\ref{sec:comments} above.
The objection we made that the solutions obtained by plain differentiation may change sign  was taken into account. In this section, for the sake of clarity we drop the subindex~$\RFL$: $(-\Delta)^s = (-\Delta)^s_\RFL$.

This is the way his argument proceeds in four  steps. Working in arbitrary dimension, we consider 
functions $u\in W^{1,1}(\mathbb R^N)$. 
First, we need the interesting identity
\begin{equation}
(-\Delta)^s (x\cdot \nabla u)= x\cdot \nabla (-\Delta)^s u + 2s (-\Delta)^s u, \quad \text{in } \ \mathbb R^N.
\end{equation}
This identity is proved in \cite[Proof of Lemma 5.1]{Ros-Oton2014b} by calculations with integrals, but we suggest the reader to do it as an exercise, by taking Fourier transforms and manipulating the resulting formula.

Then we need the result by Getoor (see \cite[Section 3]{getoor} or \cite{Biler}) that applies to a very particular continuous solution
$$
(-\Delta)^s (1-|x|^2)_+^s= C>0 \qquad \text{in }  B_1\subset \mathbb R^N.
$$
with $C=C(N,s)>0.$ The next step is new and goes as follows. If we call $U(x)=(1-|x|^2)_+^s$ and put $V=x\cdot \nabla U$
we conclude that
$$
(-\Delta)^s  V= (-\Delta)^s (x\cdot \nabla U)= x\cdot \nabla  C + 2s  C=2sC
$$
holds in $B_1$. Finally, we consider $W=2sU-V$ and get $(-\Delta)^s W=0$ in $B_1.$ Moreover, inside $B_1$ we get
$$
W=  2sU -x\cdot \nabla U= \frac{2s(1-|x|^2)}{(1-|x|^2)^{1-s}}+\frac{2s|x|^2}{(1-|x|^2)^{1-s}}=
\frac{2s}{(1-|x|^2)^{1-s}}\,,
$$
and this is a singular solution up to a harmless constant. This is precisely the singular solution provided by Hmissi \cite{MR1295711} and Bogdan \cite{bogdan99representation}.

\section{Proofs of Lemmas~\ref{lem:sharp interior} and~\ref{lem:sharp boundary}}
\label{appendix:technicalities}

\begin{proof}[Proof of Lemma~\ref{lem:sharp interior}]
We estimate
\begin{multline*}
\Green (\delta^\beta \chi_{\{ \delta < \eta \}})(x) \asymp
\int_{\{\delta(y)<\eta\}}\frac{\delta(y)^\beta}{|x-y|^{n-2s}}
\left( \frac{\delta(x) \delta(y)}{|x-y|^2} \wedge 1  \right)^\gamma\;dy\ \asymp \\
\asymp\ \delta(x)^\gamma\int_{\{\delta(y)<\eta/4\}}\frac{\delta(y)^{\beta+\gamma}}{|x-y|^{n-2s+2\gamma}} \;dy
+\int_{\{\eta/4<\delta(y)<\eta\}}\frac{\delta(y)^{\beta}}{|x-y|^{n-2s}} \;dy.
\end{multline*}
Using the co-area formula (see, \textit{e.g.},~\cite{federer1969geometric+measure+theory}), we have that
\begin{align}
\int_{\{\delta(y)<\eta/4\}}\frac{\delta(y)^{\beta+\gamma}}{|x-y|^{n-2s+2\gamma}} \;dy
&=
\int_0^{\eta/4} t^{\beta+\gamma}\int_{\{\delta(y)=t\}}\frac{dy}{|x-y|^{n-2s+2\gamma}}\;dt,
\label{outside-tubular-1}\\
\int_{\{\eta/4<\delta(y)<\eta\}}\frac{\delta(y)^{\beta}}{|x-y|^{n-2s}} \;dy
&=
\int_{\eta/4}^\eta t^{\beta}\int_{\{\delta(y)=t\}}\frac{dy}{|x-y|^{n-2s}}\;dt.
\label{outside-tubular-2}
\end{align}
Let us first deal with~\eqref{outside-tubular-1}.
The inner~$(n-1)$-dimensional integral is uniformly bounded in~$\eta$ whenever~$-2s+2\gamma<-1$, which is~$\gamma<s-\frac12$.
The integration in the~$t$ variable concludes then the claimed estimate.
If instead~$\gamma>s-\frac12$, then we have
\begin{align*}
\int_{\{\delta(y)<\eta/4\}}\frac{\delta(y)^{\beta+\gamma}}{|x-y|^{n-2s+2\gamma}} \;dy
& \asymp\ \int_0^{\eta/4} t^{\beta+\gamma} \big(\delta(x)-t\big)^{2s-2\gamma-1}\;dt \\
& \asymp\ \delta(x)^{\beta-\gamma+2s}\int_0^{\eta/\delta(x)} t^{\beta+\gamma} (4-t)^{2s-2\gamma-1}dt \\
& \asymp\ \delta(x)^{\beta-\gamma+2s}\left(\frac{\eta}{\delta(x)}\right)^{\beta+\gamma+1}
=\eta^{\beta+\gamma+1}\delta(x)^{2s-2\gamma-1};
\end{align*}
as in this case it is~$2s-2\gamma-1<0$, then we conclude
\begin{align*}
\int_{\{\delta(y)<\eta/4\}}\frac{\delta(y)^{\beta+\gamma}}{|x-y|^{n-2s+2\gamma}} \;dy
\leq\eta^{\beta+2s-\gamma}
\end{align*}
up to universal constants.
Needless to say, the above also holds when~$\gamma=s-\frac12$ with the suitable modifications
and we get
\begin{align*}
\int_{\{\delta(y)<\eta/4\}}\frac{\delta(y)^{\beta+\gamma}}{|x-y|^{n-2s+2\gamma}} \;dy\leq
\eta^{\beta+\gamma+1}\,|\!\ln\eta|.
\end{align*}

Let us now give a close look at~\eqref{outside-tubular-2}.
The inner~$(n-1)$-dimensional integral is uniformly bounded in~$\eta$ whenever~$s>\frac12$,
then the integration in~$t$ is elementary.
If~$s<1/2$, reasoning as above we get
\begin{align*}
\int_{\{\eta/4<\delta(y)<\eta\}}\frac{\delta(y)^{\beta}}{|x-y|^{n-2s}} \;dy
\asymp\int_{\eta/4}^\eta t^{\beta} \big|\delta(x)-t\big|^{2s-1}\;dt
\asymp\eta^{\beta}\delta(x)^{2s},
\end{align*}
and, in case~$s=1/2$,
\begin{align*}
\int_{\{\eta/4<\delta(y)<\eta\}}\frac{\delta(y)^{\beta}}{|x-y|^{n-2s}} \;dy
\asymp\eta^{\beta}\delta(x)\,|\!\ln\delta(x)|.
\end{align*}

The above proves the first claim in the statement. Mind now that in the case~$\gamma<s-\frac12$
we could simply estimate
\begin{align*}
\Green (\delta^\beta \chi_{\{ \delta < \eta \}})(x) \leq
\delta(x)^\gamma\int_{\{\delta(y)<\eta\}}\frac{\delta(y)^{\beta+\gamma}}{|x-y|^{n-2s+2\gamma}} \;dy
\end{align*}
and the above analysis would then bear
\begin{align*}
\Green (\delta^\beta \chi_{\{ \delta < \eta \}})(x) \leq \eta^{\beta+\gamma+1}\delta(x)^\gamma,
\qquad\text{for }\delta(x)>\eta/2.
\end{align*}
\end{proof}

\begin{proof}[Proof of Lemma~\ref{lem:sharp boundary}]
From now on let~$x\in\{\delta<\eta/2\}$ be fixed. Call~$\Phi:B(x,1)\to B(0,1)$
a diffeomorphism satisfying
\begin{align}\label{boundary diff}
\begin{split}
& \Phi(\Omega\cap B(x,1))=B(0,1)\cap\{y\in\mathbb R^n:y\cdot e_n>0\} \\
& \Phi(y)\cdot e_n=\delta(y)\text{ for any }y\in B(x,1),\qquad
\Phi(x)=\delta(x)e_n.
\end{split}
\end{align}
Also, we are going to intensively use~\eqref{eq:K0},~\eqref{eq:K1}, and~\eqref{eq:K2}.
We split the estimate into the five regions
\begin{align*}
\Omega_1:=B\big(x,\delta(x)/2\big),\qquad &
\Omega_2:=\{y:\delta(y)<\eta\}\setminus B(x,1),\\
\Omega_3:=\{y:\delta(y)<\delta(x)/2\}\cap B(x,1),\qquad &
\Omega_4:=\{y:3\delta(x)/2<\delta(y)<\eta\}\cap B(x,1), \\
\Omega_5:=\{y:\delta(x)/2<\delta(y)<3\delta(x)/2\} & \cap \big(B(x,1) \setminus B(x,\delta(x)/2)\big).
\end{align*}
\begin{itemize}

\item For~$y\in\Omega_1$ we use that
\begin{align*}
\left(\frac{\delta(x)\delta(y)}{|x-y|^2}\wedge 1\right)\asymp 1
\end{align*}
so that
\begin{align}\label{omega1}
\begin{split}
\int_{\Omega_1}\G(x,y)\,\delta(y)^\beta \; dy & \asymp
\int_{\Omega_1}\frac{\delta(y)^\beta}{|x-y|^{n-2s}}\;dy \\
&\asymp\delta(x)^\beta\int_{B(x,\delta(x)/2)}\frac{dy}{|x-y|^{n-2s}}\asymp
\delta(x)^{\beta+2s}.
\end{split}
\end{align}

\item For~$y\in\Omega_2$ we use that
\begin{align*}
\left(\frac{\delta(x)\delta(y)}{|x-y|^2}\wedge 1\right)\asymp \frac{\delta(x)\delta(y)}{|x-y|^2}
\end{align*}
so that
\begin{align}\label{omega2}
\begin{split}
\int_{\Omega_2}\G(x,y)\delta(y)^\beta dy & \asymp
\delta(x)^\gamma\int_{\Omega_2}\frac{\delta(y)^{\beta+\gamma}}{|x-y|^{n-2s+2\gamma}}dy \\
& \asymp\ \delta(x)^\gamma\int_{\Omega_2}\delta(y)^{\beta+\gamma}dy
\ \asymp\ \eta^{\beta+\gamma+1}\delta(x)^\gamma.
\end{split}
\end{align}

\item For~$y\in\Omega_3$ we use that
\begin{align*}
\left(\frac{\delta(x)\delta(y)}{|x-y|^2}\wedge 1\right)\asymp \frac{\delta(x)\delta(y)}{|x-y|^2}
\end{align*}
so that
\begin{align*}
\int_{\Omega_3}\G(x,y)\,\delta(y)^\beta \; dy\asymp
\delta(x)^\gamma\int_{\Omega_3}\frac{\delta(y)^{\beta+\gamma}}{|x-y|^{n-2s+2\gamma}}\;dy.
\end{align*}
Applying the change of variable entailed by~$\Phi$ ---as defined in~\eqref{boundary diff}---,
we get
\begin{align*}
& \int_{\Omega_3}\G(x,y)\delta(y)^\beta dy \asymp
\delta(x)^\gamma\int_{\{0<z_n<\delta(x)/2\}\cap B(0,1)}\frac{z_n^{\beta+\gamma}}{\big(|\delta(x)-z_n|+|z'|\big)^{n-2s+2\gamma}}\,dz \\
& \asymp\ \delta(x)^\gamma\int_{\{|z'|<1\}}\int_0^{\delta(x)/2}\frac{z_n^{\beta+\gamma}}{\big(|\delta(x)-z_n|+|z'|\big)^{n-2s+2\gamma}}\,dz_n\,dz' \\
& \asymp\ \delta(x)^\gamma\int_0^1 t^{n-2} \int_0^{\delta(x)/2}\frac{z_n^{\beta+\gamma}}{\big((\delta(x)-z_n)+t\big)^{n-2s+2\gamma}}\,dz_n\,dt \\
& \asymp\ \delta(x)^{\beta+2s}\int_0^{1/\delta(x)} t^{n-2} \int_0^{1/2}\frac{h^{\beta+\gamma}}{\big((1-h)+t\big)^{n-2s+2\gamma}}\,dh\,dt \\
& \asymp\ \delta(x)^{\beta+2s}\int_0^{1/\delta(x)}  \frac{t^{n-2}}{\big(1+t\big)^{n-2s+2\gamma}}\,dt
  \asymp\ \delta(x)^{\beta+2s}\int_0^{1/\delta(x)}  \frac{dt}{\big(1+t\big)^{2-2s+2\gamma}} \\
& \asymp\ \delta(x)^{\beta+2s}\,
\left\lbrace\begin{aligned}
& 1 & & \text{if }1-2s+2\gamma>0 \\
& |\!\ln(\delta(x))| & & \text{if }1-2s+2\gamma=0 \\
& \delta(x)^{1-2s+2\gamma} & & \text{if }1-2s+2\gamma<0
\end{aligned}\right.
\end{align*}
which, rephrased, means
\begin{align}\label{omega3}
\int_{\Omega_3}\G(x,y) \, \delta(y)^\beta \; dy\asymp
\left\lbrace\begin{aligned}
& \delta(x)^{\beta+2s} & & \text{if }\gamma>s-\frac12, \\
& \delta(x)^{\beta+2s}\,|\!\ln(\delta(x))| & & \text{if }\gamma=s-\frac12, \\
& \delta(x)^{\beta+2\gamma+1} & & \text{if }\gamma<s-\frac12 .
\end{aligned}\right.
\end{align}

\item For the integration in~$\Omega_4$, we use that, for~$y\in\Omega_4$,
\begin{align*}
\left(\frac{\delta(x)\delta(y)}{|x-y|^2}\wedge 1\right)\asymp \frac{\delta(x)\delta(y)}{|x-y|^2}
\end{align*}
so that
\begin{align*}
& \int_{\Omega_4}\G(x,y)\,\delta(y)^\beta \; dy\asymp
\delta(x)^\gamma\int_{\{3\delta(x)/2<\delta(y)<\eta\}\cap B(x,1)}\frac{\delta(y)^{\beta+\gamma}}{\big|x-y\big|^{n-2s+2\gamma}}\,dy
\end{align*}
and, changing variables at the aid of~$\Phi$ as above,
\begin{align*}
& \int_{\Omega_3}\G(x,y)\,\delta(y)^\beta \; dy \ \asymp \\
& \asymp\ \delta(x)^\gamma\int_{\{3\delta(x)/2<z_n<\eta\}\cap B(0,1)}\frac{z_n^{\beta+\gamma}}{\big(|\delta(x)-z_n|+|z'|\big)^{n-2s+2\gamma}}\,dz \\
& \asymp\ \delta(x)^{\beta+2s}\int_{3/2}^{\eta/\delta(x)}\int_0^{1/\delta(x)}
\frac{h^{\beta+\gamma}\,t^{n-2}}{\big((h-1)+t\big)^{n-2s+2\gamma}}\;dt\;dh \\
& \asymp\ \delta(x)^{\beta+2s}\int_{3/2}^{\eta/\delta(x)}\int_0^{1/((h-1)\delta(x))}
\frac{h^{\beta+\gamma}\,r^{n-2}}{(h-1)^{1-2s+2\gamma}(1+r)^{n-2s+2\gamma}}\;dr\;dh \\
& \asymp\ \delta(x)^{\beta+2s}\int_{3/2}^{\eta/\delta(x)}
\frac{h^{\beta+\gamma}}{(h-1)^{1-2s+2\gamma}}
\int_1^{1/((h-1)\delta(x))}
\frac{dr}{(1+r)^{2-2s+2\gamma}}\;dh.
\end{align*}
At this point we need to separate into different cases, since
\begin{multline*}
\int_1^{1/((h-1)\delta(x))}\frac{dr}{(1+r)^{2-2s+2\gamma}} \ \asymp \\
\asymp\ \left\lbrace\begin{aligned}
& 1					 							& & \text{if }1-2s+2\gamma>0 \\
& |\!\ln((h-1)\delta(x))| 						& & \text{if }1-2s+2\gamma=0 \\
& (h-1)^{1-2s+2\gamma}\delta(x)^{1-2s+2\gamma} 	& & \text{if }1-2s+2\gamma<0
\end{aligned}\right.
\end{multline*}
which, rephrased, gives
\begin{align*}
& \int_{\Omega_3}\G(x,y) \, \delta(y)^\beta \; dy \ \asymp \\
& \asymp\ \delta(x)^{\beta+2s}\,
\left\lbrace\begin{aligned}
& \int_{3/2}^{\eta/\delta(x)}\frac{h^{\beta+\gamma}}{(h-1)^{1-2s+2\gamma}}\,dh	
& & \text{if }\gamma>s-\frac12 \\
& \int_{3/2}^{\eta/\delta(x)}h^{\beta+\gamma}|\!\ln((h-1)\delta(x))|\,dh \;\; 			
& & \text{if }\gamma=s-\frac12 \\
& \delta(x)^{1-2s+2\gamma}\int_{3/2}^{\eta/\delta(x)}h^{\beta+\gamma}\,dh 	
& & \text{if }\gamma<s-\frac12
\end{aligned}\right. \\
& \asymp\ \left\lbrace\begin{aligned}
& \delta(x)^{\beta+2s}\int_{3/2}^{\eta/\delta(x)}h^{\beta-\gamma+2s-1}dh	
& & \text{if }\gamma>s-\frac12 \\
& \delta(x)^{\beta+2s}\int_{3/2}^{\eta/\delta(x)}h^{\beta+\gamma}|\!\ln((h-1)\delta(x))|\,dh 	\quad	
& & \text{if }\gamma=s-\frac12 \\
& \delta(x)^{\beta+2\gamma+1}\int_{3/2}^{\eta/\delta(x)}h^{\beta+\gamma}\,dh 	
& & \text{if }\gamma<s-\frac12 .
\end{aligned}\right.
\end{align*}
Now, in the case~$\gamma>s-\frac12$ we have
\begin{align*}
& \int_{\Omega_3}\G(x,y)\delta(y)^\beta dy\asymp
\delta(x)^{\beta+2s}\left\lbrace\begin{aligned}
& \eta^{\beta-2\gamma+2s}\delta(x)^{-\beta+\gamma-2s} 	& & \text{if }\beta-\gamma+2s>0 \\
& |\!\ln(\eta/\delta(x))|								& & \text{if }\beta-\gamma+2s=0 \\
& 1													& & \text{if }\beta-\gamma+2s<0
\end{aligned}\right. \\
& =\ \left\lbrace\begin{aligned}
& \eta^{\beta-2\gamma+2s}\delta(x)^\gamma 			& & \text{if }\beta-\gamma+2s>0 \\
& \delta(x)^{\beta+2s}|\!\ln(\eta/\delta(x))|		& & \text{if }\beta-\gamma+2s=0 \\
& \delta(x)^{\beta+2s}							& & \text{if }\beta-\gamma+2s<0 ,
\end{aligned}\right.
\end{align*}
when~$\gamma=s-\frac12$
\begin{align*}
& \int_{\Omega_3}\G(x,y) \, \delta(y)^\beta \; dy\asymp
\delta(x)^{\beta+2s}\int_{3/2}^{\eta/\delta(x)}h^{\beta+\gamma}|\!\ln((h-1)\delta(x))|\,dh \\		
& \asymp\
\delta(x)^{2s-\gamma-1}\eta^{\beta+\gamma+1}\big|\!\ln((\eta/\delta(x)-1)\delta(x))\big|
\ \asymp\ \eta^{\beta+\gamma+1}\,|\!\ln\eta|\,\delta(x)^\gamma,
\end{align*}
whereas for~$\gamma<s-\frac12$ we have
\begin{align*}
\int_{\Omega_3}\G(x,y)\,\delta(y)^\beta \; dy\asymp
\delta(x)^{\beta+2\gamma+1}\left(\frac\eta{\delta(x)}\right)^{\beta+\gamma+1}=
\eta^{\beta+\gamma+1}\delta(x)^{\gamma}.
\end{align*}
Resuming the information collected about the integral over~$\Omega_4$,
we have the behaviour described in Table~\ref{omega4}.
\begin{table}[th]
	\centering
	\begin{tabular}{c|c|c|c}
	&~$\gamma<s-\frac12$ &~$\gamma=s-\frac12$ &~$\gamma>s-\frac12$ \\ [.2em]
	\hline & & & \\ [-.75em]
	$\beta<\gamma-2s$ &~$\eta^{\beta+\gamma+1}\delta(x)^{\gamma}$ &
	$\eta^{\beta+\gamma+1}\,|\!\ln\eta|\,\delta(x)^\gamma$ &~$\delta(x)^{\beta+2s}$ \\
	 & & & \\ [-.75em]
	$\beta=\gamma-2s$ &~$\eta^{\beta+\gamma+1}\delta(x)^{\gamma}$ &
	$\eta^{\beta+\gamma+1}\,|\!\ln\eta|\,\delta(x)^\gamma$ &~$\delta(x)^{\beta+2s}|\!\ln(\eta/\delta(x))|$ \\
	 & & & \\ [-.75em]
	$\beta>\gamma-2s$ &~$\eta^{\beta+\gamma+1}\delta(x)^{\gamma}$ &
	$\eta^{\beta+\gamma+1}\,|\!\ln\eta|\,\delta(x)^\gamma$ &~$\eta^{\beta-2\gamma+2s}\delta(x)^\gamma$
	\end{tabular}
	\caption{The integration over~$\Omega_4$.}\label{omega4}
\end{table}

\item For~$y\in\Omega_5$ we use that
\begin{align*}
\left(\frac{\delta(x)\delta(y)}{|x-y|^2}\wedge 1\right)\asymp \frac{\delta(x)\delta(y)}{|x-y|^2},
\end{align*}
so that
\begin{align*}
& \int_{\Omega_5}\G(x,y)\,\delta(y)^\beta \; dy\ \asymp \\
& \asymp\ \delta(x)^\gamma\int_{\{\delta(x)/2<\delta(y)<3\delta(x)/2\}\cap (B(x,1)\setminus B(x,\delta(x)/2))}\frac{\delta(y)^{\beta+\gamma}}{\big|x-y\big|^{n-2s+2\gamma}}\,dy \\
& \asymp\ \delta(x)^{\beta+2\gamma}\int_{\{\delta(x)/2<\delta(y)<3\delta(x)/2\}\cap (B(x,1)\setminus B(x,\delta(x)/2))}\big|x-y\big|^{-n+2s-2\gamma}\,dy
\end{align*}
and, applying the change of variable induced by the~$\Phi$ defined in~\eqref{boundary diff},
\begin{align*}
& \int_{\Omega_5}\G(x,y)\,\delta(y)^\beta \; dy \ \asymp \\
& \asymp\ \delta(x)^{\beta+2\gamma}\int_{\delta(x)/2}^{3\delta(x)/2}\int_{\delta(x)/2}^1
r^{n-2}\big(|\delta(x)-h|+r\big)^{-n+2s-2\gamma}\,dr\,dh \\
& \asymp\ \delta(x)^{\beta+2\gamma}
\int_{\delta(x)/2}^1 r^{2s-2\gamma-1} \int_{-\delta(x)/(2r)}^{\delta(x)/(2r)}
\big(|t|+1\big)^{-n+2s-2\gamma}\,dt\,dr \\
& \asymp\ \delta(x)^{\beta+2s}
\int_{1/2}^{1/\delta(x)} \rho^{2s-2\gamma-1} \int_0^{1/\rho}
\big(t+1\big)^{-n+2s-2\gamma}\,dt\,d\rho \\
& \asymp\ \delta(x)^{\beta+2s}
\int_{1/2}^{1/\delta(x)} \rho^{2s-2\gamma-2}\,d\rho \\
& \asymp \delta(x)^{\beta+2s}
\left\lbrace\begin{aligned}
& \delta(x)^{-2s+2\gamma+1} 	& & \text{if }2s-2\gamma-1>0 \\
& |\!\ln\delta(x)| 			& & \text{if }2s-2\gamma-1=0 \\
& 1 							& & \text{if }2s-2\gamma-1<0
\end{aligned}\right.
\end{align*}
meaning
\begin{align}\label{omega5}
\int_{\Omega_5}\G(x,y)\,\delta(y)^\beta \; dy\ \asymp\ \left\lbrace\begin{aligned}
& \delta(x)^{\beta+2\gamma+1}					& & \text{if }\gamma<s-\frac12 \\
& \delta(x)^{\beta+2s}|\!\ln\delta(x)| 		& & \text{if }\gamma=s-\frac12 \\
& \delta(x)^{\beta+2s}						& & \text{if }\gamma>s-\frac12 .
\end{aligned}\right.
\end{align}

\end{itemize}
\end{proof}


\section*{Acknowledgments}
The research of DGC and JLV was partially supported by grant PGC2018-098440-B-I00 from the Ministerio de Ciencia, Innovación y Universidades of the Spanish Government. NA was partially supported by the Alexander von Humboldt Foundation.

The authors would like to thank the Instituto of Matemática Interdisciplinar at UCM, for supporting a visit of NA to Madrid. 
	JLV is grateful for the hospitality of Prof.~Figalli during his visit to ETH 
	and Prof.~Ros-Oton for fruitful conversations. 
	The authors are grateful to Prof.~Vondra\v cek for 
	comments that improved the paper. 
	Finally, we would like to thank Dr.\ del Teso for his advice on the numerics.

\end{document}